 \newtheorem{thm}{Theorem}[section]
 \newtheorem{cor}[thm]{Corollary}
 \newtheorem{lem}[thm]{Lemma}
 \newtheorem{prop}[thm]{Proposition}
 \theoremstyle{definition}
 \newtheorem{defn}[thm]{Definition}
 \theoremstyle{remark}
 \newtheorem{rem}[thm]{Remark}
 \newtheorem*{ex}{Example}
 \numberwithin{equation}{section}
\renewcommand{\leq}{\leqslant}
\renewcommand{\geq}{\geqslant}
\renewcommand{\setminus}{\smallsetminus}
\begin{document}

%
%
%
%
%
%
%
%
%

\title[Ideal Spaces of the Haagerup Tensor Product of TROs]
 {Ideal Spaces of the Haagerup Tensor Product of Ternary Rings of Operators}

\author[V. Rajpal]{Vandana Rajpal}
\address{Department of Mathematics \\  Shivaji College, \\ University of Delhi, \\ Delhi-110 027, India}

\email{vandanarajpal@shivaji.du.ac.in}

\author[A. Kansal]{Arpit Kansal} 
\address{Department of Mathematics \\   Shyama Prasad Mukherji College for Women, \\ University
of Delhi, \\ Delhi-110 026}
\email{arpit@spm.du.ac.in}
\subjclass{Primary  46L06; Secondary 46L07; 46M40}

\keywords{Haagerup tensor product, TRO, $C^*$-algebras, Glimm ideals}

\begin{abstract}
We characterize the primal, factorial, and Glimm ideals of the Haagerup tensor product $V\otimes^{h} B$ of a TRO $V$ and a $C^{\ast}$-algebra $B$.
\end{abstract}

\date{August 15, 2025}

\maketitle

\section{Introduction}
The ideal structure of the Banach algebra  $A \otimes^h B$ of $C^{\ast}$-algebras $A$ and $B$ has been extensively studied,  particularly in classifying primal, Glimm, and factorial ideals in terms of those of $A$ and $B$ (\cite{AS}, \cite{AR}).  Recent work has established deep connections between the ideals of 
$V \otimes^h B$ and those of $V$ and $B$, showing how maximal, prime, and primitive ideals correspond naturally to their counterparts in $V$ and $B$ \cite{AKKKK}.

In this paper, we continue this study by providing a comprehensive characterization of the factorial, primal and Glimm ideals of $V \otimes^h B$. Our results further strengthen the structural parallels between the ideals of $V \otimes^h B$ and those of $V$ and $B$, extending the homeomorphic correspondences developed in earlier works. The structure of the paper is as follows: 

After setting up the necessary preliminaries in Section $2$, including basic properties of the ideals of TRO, the Haagerup tensor product, and the ideals of $V \otimes^h B$, we analyze the structure of the factorial, primal and Glimm ideals of a TRO
$V$ in Section $3$. We introduce the notion of quasi-standardness for a ternary ring of operators  $V$, and establish several equivalent conditions characterizing this property. There is a natural homeomorphism of $\operatorname{Id}(V)$ onto $\operatorname{Id}(\mathcal{A}(V))$ sending $I \to \mathcal{A}(I)$. We show that the restrictions of this map to the spaces of factorial, primal, and Glimm ideals yield homeomorphisms onto the corresponding ideal spaces of $\mathcal{A}(V)$. These results form the foundation for our subsequent analysis of ideal structures in $V \otimes^h B$.

Section $4$ examines the structure of space of $\epsilon$-ideals $\operatorname{Id}(V \otimes^h B)$ of $V \otimes^h B$. We define a weak topology $\tau_w$ on $\operatorname{Id}(V \otimes^h B)$ and study the fundamental map $\Phi$, which assigns pairs of ideals from $V$ and $B$ to $\epsilon$-ideals in $V \otimes^h B$. We first establish the continuity of $\Phi$ when restricted to 
$\operatorname{Prim}(V) \times \operatorname{Prim}(B)$, ensuring that it behaves well on primitive ideal spaces. Furthermore, we prove that for any proper ideals \( I_0 \) of \( V \) and \( J_0 \) of \( B \), the ideal \( V \otimes^{h} J_0 + I_0 \otimes^{h} B \) is an intersection of  primitive ideals of \( V \otimes^{h} B \), which, together with the continuity of the restricted map, paves the way for establishing the continuity of 
$\Phi$ on the entire space $\operatorname{Id}(V \otimes^h B)$.

In Section $5$, we expand our study of ideals in the Haagerup tensor product by introducing the concept of primal ideals of $V \otimes^h B$. We establish a structural characterization of closed minimal primal ideals, demonstrating that they are precisely of the form $I \otimes^h B+V \otimes^h J$  where $I$ and $J$ are minimal closed primal ideals in $V$ and $B$, respectively. We also show that this correspondence defines a homeomorphism between the spaces of such ideals. Additionally, we introduce the concept of factorial ideals in $V \otimes^h B$ and prove that the mapping $\Phi$, when restricted to the spaces of factorial ideals, is a homeomorphism onto $\operatorname{Fac}(V \otimes^h B)$. 

 Section $6$ develops the Glimm ideal theory for \(V \otimes^h B\), providing a complete characterization of its Glimm ideal space. We further establish that  the restriction of the mapping $\Phi$ to the spaces of Glimm ideals defines a homeomorphism onto $\operatorname{Glimm}(V \otimes^h B)$ when considered as sets of ideals, thereby extending the structural parallels as observed in the factorial and primal cases.  Lastly, we prove that $V\otimes^{h} B$ is quasi-standard if and only if both $V$ and $B$ are.

Throughout this paper, we will assume that $V$ denotes a TRO and $B$ a $C^{\ast}$-algebra. Additionally, unless otherwise stated, all ideals considered in the paper will be assumed to be closed. For simplicity of notation, an ideal in $V \otimes^h B$ will always mean an $\epsilon$-ideal.

\section{Preliminary results} 

Given two operator spaces $X$ and $Y$, the Haagerup norm of an element $x \in X \otimes Y$  is given  by

\[
\|x\|_h = \inf \left\{ \|a\| \|b\| : a = (a_{1j})_{1 \times n}, \, b = (b_{j1})_{n \times 1}, \, \text{and} \, x = \sum_{j=1}^n a_{1j} \otimes b_{j1} \right\}.
\]

 The Haagerup tensor product $X \otimes^h Y$ is the completion of \( X \otimes Y \) under the Haagerup norm (\cite{ER}). The following well known result regarding Haagerup tensor product will be used in this paper.  

 \begin{prop}
    (\cite{ER}, Proposition $9.2.5$) Let $X_i, Y_i, i= 1, 2$ be operator spaces and $f_i: X_i \to Y_i$ be two maps. Then 

     \begin{enumerate}
         \item If $f_1$ and $f_2$ are complete contractions, then so is $f_1 \otimes f_2 : X_1 \otimes^h X_2 \to Y_1 \otimes^h Y_2$. 
         \item If $f_1$ and $f_2$ are complete isometries, then so is $f_1 \otimes f_2 : X_1 \otimes^h X_2 \to Y_1 \otimes^h Y_2$. 
     \end{enumerate}
 \end{prop}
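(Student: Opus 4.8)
The plan is to prove both statements directly from the explicit description of the (matricial) Haagerup norm recalled above, deducing (2) from (1) together with an extension principle for completely bounded bilinear forms.

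For (1), I would start from the factorisation formula. Given $u$ in the algebraic tensor product $X_1\otimes X_2$ and any representation $u=\sum_{j=1}^{n}a_{1j}\otimes b_{j1}$ with $a=(a_{1j})\in M_{1,n}(X_1)$ and $b=(b_{j1})\in M_{n,1}(X_2)$, the element $(f_1\otimes f_2)(u)$ factors as $\sum_{j=1}^{n}f_1(a_{1j})\otimes f_2(b_{j1})$ through the row $f_1^{(1,n)}(a)\in M_{1,n}(Y_1)$ and the column $f_2^{(n,1)}(b)\in M_{n,1}(Y_2)$. Since $f_i$ is a complete contraction, $\|f_1^{(1,n)}(a)\|\le\|a\|$ and $\|f_2^{(n,1)}(b)\|\le\|b\|$, hence $\|(f_1\otimes f_2)(u)\|_h\le\|a\|\,\|b\|$; taking the infimum over all such representations gives $\|(f_1\otimes f_2)(u)\|_h\le\|u\|_h$, and density extends $f_1\otimes f_2$ to a contraction $X_1\otimes^h X_2\to Y_1\otimes^h Y_2$. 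To upgrade to complete contractivity I would repeat the argument one matrix level up, using the standard description $\|U\|_h=\inf\{\|A\|\,\|B\| : U=A\odot B,\ A\in M_{m,p}(X_1),\ B\in M_{p,m}(X_2)\}$ for $U\in M_m(X_1\otimes X_2)$, where $\odot$ denotes matrix multiplication carried out with $\otimes$ in place of the scalar product: applying $f_1,f_2$ entrywise sends such a factorisation of $U$ to a factorisation of $(f_1\otimes f_2)^{(m)}(U)$ through $f_1^{(m,p)}(A)$ and $f_2^{(p,m)}(B)$, whose norms do not increase, so $\|(f_1\otimes f_2)^{(m)}\|\le 1$ for every $m$.

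For (2), complete isometries are in particular complete contractions, so (1) already yields $\|(f_1\otimes f_2)^{(m)}(U)\|_h\le\|U\|_h$, and only the reverse inequality is at issue. I would first reduce to the case of inclusions: writing $f_i=\iota_i\circ g_i$ with $g_i\colon X_i\to f_i(X_i)$ a completely isometric isomorphism and $\iota_i\colon f_i(X_i)\hookrightarrow Y_i$ the inclusion, part (1) applied to $g_i$ and to $g_i^{-1}$ shows that $g_1\otimes g_2$ and its inverse $g_1^{-1}\otimes g_2^{-1}$ are both complete contractions, whence $g_1\otimes g_2$ is a complete isometry; so it suffices to prove that $\iota_1\otimes\iota_2$ is a complete isometry, i.e. that $\otimes^h$ is injective. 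For this I would pass to duals, using the completely isometric identification of $(X_1\otimes^h X_2)^{*}$ with the space of completely bounded bilinear forms on $X_1\times X_2$: the restriction map from completely bounded bilinear forms on $Y_1\times Y_2$ to those on $X_1\times X_2$ is a complete quotient map, because every completely bounded (multi)linear form on $X_1\times X_2$ extends to $Y_1\times Y_2$ with the same completely bounded norm by the Wittstock--Paulsen--Smith extension theorem. Dualising this complete quotient map shows that $\iota_1\otimes\iota_2$ is a complete isometry, and composing with the complete isometry $g_1\otimes g_2$ finishes the argument.

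The routine bookkeeping lies in (1) and in the reduction step; the genuine content is the injectivity of the Haagerup tensor product invoked in (2), which I expect to be the main obstacle. The difficulty is that a factorisation of $u$ nearly attaining $\|u\|_h$ inside $Y_1\otimes^h Y_2$ uses row and column entries lying in $Y_1,Y_2$ rather than in $X_1,X_2$, and since there need be no completely contractive projection $Y_i\to X_i$ one cannot simply restrict it. The substitute is exactly the extension theorem for completely bounded multilinear maps together with the duality description of the Haagerup norm; alternatively, one can argue concretely by representing $Y_i\subseteq B(H_i)$ completely isometrically and using a Wittstock-type extension of the slice maps associated with such a factorisation to push it back into $X_i$ at the cost of an arbitrarily small error.
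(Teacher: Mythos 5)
The paper states this result purely as a quotation from Effros--Ruan (Proposition 9.2.5) and supplies no proof of its own, so there is nothing internal to compare against. Your argument is correct and is essentially the standard proof from that reference: part (1) by pushing row--column factorisations through the matricial Haagerup norm, and part (2) by reducing to inclusions and deducing injectivity of $\otimes^h$ from the Wittstock--Paulsen--Smith extension theorem for completely bounded bilinear maps via the duality $(X_1\otimes^h X_2)^{*}\cong CB(X_1\times X_2;\mathbb{C})$, with $M_n$-valued bilinear maps supplying the higher matrix levels.
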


Let $V$ be a ternary ring of operators (TRO) between Hilbert spaces $\mathcal{H}$ and $\mathcal{K}$, i.e., a norm-closed subspace of $B(\mathcal{H}, \mathcal{K})$ that is closed under the ternary product $(x, y, z) \mapsto xy^*z, \quad \text{for all } x, y, z \in V$.  According to  \cite{MHA}, the  linking $C^{\ast}$-algebra $\mathcal{A}(V)$ generated by $V$, is given by $$A(V)=
\begin{bmatrix}
    C(V)       & V\\
    V^*   & D(V)
\end{bmatrix}$$
 
where $C(V)$ and $D(V)$ denotes the $C^{\ast}$-algebras generated by $VV^*$ and $V^*V$ respectively. 

A linear map $\pi: V \to B(\mathcal{H}, \mathcal{K})$ is called a \emph{representation} of $V$ if it preserves the ternary structure:
\[
\pi(xy^*z) = \pi(x)\pi(y)^{*}\pi(z), \quad \text{for all } x, y, z \in V.
\]
Given such a representation $\pi$, there exists an associated representation $\mathcal{A}(\pi): \mathcal{A}(V) \to B(\mathcal{H} \oplus \mathcal{K})$, which establishes a one-to-one correspondence between (irreducible) representations of $V$ and those of $\mathcal{A}(V)$ (see \cite{AAV}).

\begin{defn}
    Let $V$ be a TRO, and let $I$ be a closed subspace of $V$.

\begin{enumerate}
    \item The subspace $I$ is called an \emph{ideal} of $V$ if $IV^*V + VV^*I \subseteq I$.
The collection of all (proper) closed ideals of $V$ is denoted by $\operatorname{Id}(V)$ (resp., $\operatorname{Id}'(V)$). 
    The weak topology $\tau_w$ on $\operatorname{Id}(V)$ is generated by sub-basis sets of the form $U(J) = \{ I \in \operatorname{Id}(V) : I \not\supseteq J \},$
    where $J \in \operatorname{Id}(V)$.

    \item  An ideal \( P \) of \( V \) is prime if, for any ideals \( I, J, K \) in \( V \), the condition  
$IJK \subseteq P $  
implies that at least one of  
$I \subseteq P, \quad J \subseteq P, \quad \text{or} \quad K \subseteq P $
holds.
 The space of prime ideals of $V$ is denoted by $\operatorname{Prime}(V)$, equipped with the subspace topology inherited from $\operatorname{Id}(V)$.

    \item An ideal $I \in \operatorname{Id}(V)$ is called \emph{primitive} if it is the kernel of some irreducible representation of $V$. The space of primitive ideals of $V$ is denoted by $\operatorname{Prim}(V)$,  equipped with the subspace topology inherited from $\operatorname{Id}(V)$.
\end{enumerate}

\end{defn}

The natural map $\theta: \operatorname{Id}(V) \to \operatorname{Id}(\mathcal{A}(V)) $ is given by $$\theta(I)= \mathcal{A}(I).$$ The following proposition lists some known properties of the map $\theta$.

\begin{thm} \label{thm2} For a TRO $V$, the map $\theta$ has the following properties. 
    \begin{enumerate}
        \item[(i)] \textit{(\cite{AAV}, Theorem $2.6$)} The restriction of $\theta$ to $\operatorname{Prime}(V)$ is a homeomorphism onto $\operatorname{Prime}(\mathcal{A}(V))$.
        \item[(ii)] \textit{(\cite{AA}, Proposition $3$)} The restriction of $\theta$ to $\operatorname{Prim}(V)$ is a homeomorphism onto $\operatorname{Prim}(\mathcal{A}(V))$. 
    \end{enumerate}
        \item 
    
\end{thm}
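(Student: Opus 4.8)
The plan is to obtain both parts from one structural fact: $\theta$ is an order isomorphism of the lattice $\operatorname{Id}(V)$ onto the lattice $\operatorname{Id}(\mathcal{A}(V))$. Once this is granted, the topological assertions are automatic and only the identification of the prime, resp.\ primitive, subsets requires argument. All the ingredients are present in \cite{AAV} and \cite{AA}; what follows is how I would reassemble them.

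First I would recall that $\theta$ is bijective with inverse the corner map $L\mapsto L\cap V$, where $V$ is identified with the $(1,2)$-corner of $\mathcal{A}(V)$. Indeed, if $L\in\operatorname{Id}(\mathcal{A}(V))$ and $I:=L\cap V$, the two-sided ideal condition forces $VV^{*}I+IV^{*}V\subseteq I$, so $I\in\operatorname{Id}(V)$; and the equalities $\overline{IV^{*}}=\overline{II^{*}}$, $\overline{V^{*}I}=\overline{I^{*}I}$, valid for any closed ideal $I$ of a TRO since $I$ admits left, resp.\ right, approximate units drawn from $\overline{II^{*}}$, resp.\ $\overline{I^{*}I}$, identify the diagonal corners of $L$ as $\overline{II^{*}}$ and $\overline{I^{*}I}$, whence $L=\mathcal{A}(I)$. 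Conversely $\mathcal{A}(I)$ is a closed ideal of $\mathcal{A}(V)$ meeting $V$ in $I$. Since $\theta$ and $\theta^{-1}$ preserve inclusions, $\theta$ is an order isomorphism, hence preserves arbitrary intersections. The sub-basic sets $U(J)=\{I:I\not\supseteq J\}$ defining $\tau_{w}$ on each side are purely order-theoretic, and the ideals $J$ on the $\mathcal{A}(V)$-side are exactly the $\theta(J)$, $J\in\operatorname{Id}(V)$; thus $\theta(U(J))=U(\theta(J))$, and $\theta:\operatorname{Id}(V)\to\operatorname{Id}(\mathcal{A}(V))$ is a homeomorphism. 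Since moreover the $\tau_{w}$-subspace topologies on $\operatorname{Prime}(\mathcal{A}(V))$ and $\operatorname{Prim}(\mathcal{A}(V))$ coincide with the hull--kernel topologies, it remains only to match the relevant subsets.

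For (i): a closed ideal $Q$ of a $C^{*}$-algebra is prime iff, for all closed ideals $L_{1},L_{2}$, $L_{1}\cap L_{2}\subseteq Q$ implies $L_{1}\subseteq Q$ or $L_{2}\subseteq Q$. Transporting this along $\theta$, which preserves finite intersections, $\mathcal{A}(P)$ is prime in $\mathcal{A}(V)$ iff, for all $I_{1},I_{2}\in\operatorname{Id}(V)$, $I_{1}\cap I_{2}\subseteq P$ implies $I_{1}\subseteq P$ or $I_{2}\subseteq P$. Matching this two-ideal condition with the three-ideal ternary-product definition of a prime TRO is the one genuinely TRO-specific point: I would use the identities $\overline{IV^{*}J}=I\cap J$ and $\overline{IJ^{*}K}=I\cap J\cap K$ for closed ideals of $V$, which follow from $\overline{IV^{*}}=\overline{II^{*}}$ and its adjoint and symmetric forms. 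The forward implication then follows by applying the two-ideal condition twice to $I\cap(J\cap K)\subseteq P$, and the reverse by applying the ternary condition to the triple $(I,V,J)$, noting that $V\subseteq P$ only when $P=V$. Hence $\theta$ carries $\operatorname{Prime}(V)$ bijectively onto $\operatorname{Prime}(\mathcal{A}(V))$, and the restriction of a homeomorphism to corresponding subsets is again a homeomorphism.

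For (ii): here I would invoke the bijective correspondence $\pi\leftrightarrow\mathcal{A}(\pi)$ between irreducible representations of $V$ and of $\mathcal{A}(V)$ recorded in the preliminaries. If $\pi$ is irreducible with $P=\ker\pi$, then $\ker\mathcal{A}(\pi)\in\operatorname{Id}(\mathcal{A}(V))$, so $\ker\mathcal{A}(\pi)=\mathcal{A}(I)$ with $I=\ker\mathcal{A}(\pi)\cap V=\{x\in V:\pi(x)=0\}=P$; that is, $\theta(P)=\ker\mathcal{A}(\pi)\in\operatorname{Prim}(\mathcal{A}(V))$. Conversely every primitive ideal of $\mathcal{A}(V)$ has the form $\ker\rho=\ker\mathcal{A}(\pi)=\mathcal{A}(\ker\pi)=\theta(\ker\pi)$ for a suitable irreducible $\pi$ of $V$, so $\theta$ restricts to a bijection $\operatorname{Prim}(V)\to\operatorname{Prim}(\mathcal{A}(V))$, again a homeomorphism. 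I expect the main obstacle to be the structural lemmas feeding the first two steps — that every closed ideal of $\mathcal{A}(V)$ is $\mathcal{A}(I)$, recovered from its $(1,2)$-corner, together with the TRO-ideal identities $\overline{IV^{*}}=\overline{II^{*}}$, $\overline{IV^{*}J}=I\cap J$, $\overline{IJ^{*}K}=I\cap J\cap K$, and, for (ii), that $\mathcal{A}(\pi)$ is irreducible exactly when $\pi$ is; once these are in hand the conclusions are formal.
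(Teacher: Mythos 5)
The paper offers no proof of this theorem: it is imported verbatim from the cited sources (part (i) from \cite{AAV}, Theorem 2.6, and part (ii) from \cite{AA}, Proposition 3), so there is no internal argument to compare yours against line by line. That said, your reconstruction is correct and follows the standard route that those references take. The backbone --- that $\theta$ is an order isomorphism of $\operatorname{Id}(V)$ onto $\operatorname{Id}(\mathcal{A}(V))$ with inverse $L \mapsto L \cap V$, so that the sub-basic sets $U(J)$ are matched up and the homeomorphism is automatic once the prime and primitive subsets are identified --- is exactly the mechanism the paper itself relies on throughout (e.g.\ in Propositions 3.4, 3.8 and 3.12, which transport factoriality, primality and the Glimm relation across $\theta$ in the same way). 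Your TRO-specific identities $\overline{IV^{*}}=\overline{II^{*}}$ and $\overline{IJ^{*}K}=I\cap J\cap K$ are precisely what the paper invokes as (\cite{AA}, Lemma 1) in the proof of Proposition 3.8, and the $\pi\leftrightarrow\mathcal{A}(\pi)$ correspondence for part (ii) is recorded in the preliminaries. Two small points worth tightening if you were to write this out in full: the identification of the diagonal corners of a closed ideal $L$ of $\mathcal{A}(V)$ as $\overline{II^{*}}$ and $\overline{I^{*}I}$ needs the additional observation that $L_{11}=\overline{L_{11}VV^{*}}\subseteq\overline{IV^{*}}$ because $C(V)=\overline{\operatorname{span}}\,VV^{*}$ (you state the conclusion but compress this step); and your passage from the two-ideal to the three-ideal prime condition uses that $V\not\subseteq P$, i.e.\ properness of $P$, which should be made explicit since the paper's definition of prime does not state it.
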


For each subset \( S \) of \( V \), define  
\[
\operatorname{hull}(S) = \{ P \in \operatorname{Prim}(V) \mid S \subseteq P \}
\]  
and for each subset \( B \) of \( \operatorname{Prim}(V) \), let  
\[
\operatorname{ker}(B) = \bigcap \{ P \mid P \in B \}.
\]  

For an ideal \( I \) of a \( C^{\ast} \)-algebra, it is well known that \( \operatorname{ker}(\operatorname{hull}(I)) = I \). Using this fact along with the homeomorphism \( \theta \), the proof of the following result is immediate.  

\begin{prop}\label{result1}
    For any ideal \( I \) of \( V \), we have \( \operatorname{ker}(\operatorname{hull}(I)) = I \).
\end{prop}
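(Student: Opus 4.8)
The plan is to transport the corresponding fact for $C^{*}$-algebras across the linking algebra $\mathcal{A}(V)$ by means of the map $\theta$. First I would record the order-theoretic behaviour of $\mathcal{A}(\cdot)$: for ideals $I,P$ of $V$ one has $I\subseteq P$ if and only if $\mathcal{A}(I)\subseteq\mathcal{A}(P)$, and, identifying $V$ with its canonical corner in $\mathcal{A}(V)$, $\mathcal{A}(I)\cap V=I$. Both are standard properties of the linking algebra; the second is read off from the $2\times2$ matrix description of $\mathcal{A}(I)$. Combined with Theorem \ref{thm2}(ii), which says that $\theta(P)=\mathcal{A}(P)$ gives a homeomorphism $\operatorname{Prim}(V)\to\operatorname{Prim}(\mathcal{A}(V))$, this yields $\theta(\operatorname{hull}(I))=\operatorname{hull}(\mathcal{A}(I))$; that is, a primitive ideal $P$ of $V$ contains $I$ exactly when the primitive ideal $\mathcal{A}(P)$ of $\mathcal{A}(V)$ contains $\mathcal{A}(I)$.

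With this in place the inclusion $I\subseteq\operatorname{ker}(\operatorname{hull}(I))$ is immediate. For the reverse inclusion I would take $x\in V\setminus I$; since $\mathcal{A}(I)\cap V=I$ we get $x\notin\mathcal{A}(I)$, so by the $C^{*}$-algebra identity $\operatorname{ker}(\operatorname{hull}(\mathcal{A}(I)))=\mathcal{A}(I)$ there is a primitive ideal $Q$ of $\mathcal{A}(V)$ with $\mathcal{A}(I)\subseteq Q$ and $x\notin Q$. Writing $Q=\mathcal{A}(P)$ with $P\in\operatorname{Prim}(V)$ (Theorem \ref{thm2}(ii)), the inclusion $\mathcal{A}(I)\subseteq\mathcal{A}(P)$ forces $I\subseteq P$, so $P\in\operatorname{hull}(I)$, while $x\in V$ and $x\notin\mathcal{A}(P)$ give $x\notin P$. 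Hence $x\notin\operatorname{ker}(\operatorname{hull}(I))$, proving $\operatorname{ker}(\operatorname{hull}(I))\subseteq I$.

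The only non-formal ingredient is the interplay between $\mathcal{A}(\cdot)$ and inclusions (equivalently, that $\theta$ is an order isomorphism of $\operatorname{Id}(V)$ onto $\operatorname{Id}(\mathcal{A}(V))$, together with $\mathcal{A}(I)\cap V=I$); everything else is bookkeeping. Once those structural facts about the linking algebra are invoked — they are essentially contained in \cite{MHA} and \cite{AAV} — the statement is indeed immediate. Alternatively, one can phrase the argument purely lattice-theoretically: an order isomorphism between $\operatorname{Id}(V)$ and $\operatorname{Id}(\mathcal{A}(V))$ restricting to a homeomorphism on primitive ideals carries hulls to hulls and intersections (lattice meets) to intersections, so the identity $\operatorname{ker}\circ\operatorname{hull}=\operatorname{id}$ descends from $\mathcal{A}(V)$ to $V$.
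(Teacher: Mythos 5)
Your argument is correct and is exactly the route the paper intends: the paper simply declares the result ``immediate'' from the $C^{*}$-algebra identity $\operatorname{ker}(\operatorname{hull}(\cdot))=\operatorname{id}$ and the homeomorphism $\theta$, and your proposal supplies precisely the missing bookkeeping (the order isomorphism $I\mapsto\mathcal{A}(I)$, the identity $\mathcal{A}(I)\cap V=I$, and the transport of hulls via Theorem~\ref{thm2}(ii)). No discrepancies.
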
  

As a consequence of Proposition~\ref{result1}, we obtain 
\[
U(J) = \{ I \in \operatorname{Id}(V) \mid \operatorname{hull}(I) \cap V(J) \neq \emptyset \},
\]  
where \( V(J) = \{ P \in \operatorname{Prim}(V) \mid P \not\supseteq J \} \).

Given an ideal $I$ of $V$ and an ideal $J$ of $B$, the  natural ternary homomorphisms $q_I: V \to V/I$ and $q_J:B \to B/J$ induce two  complete contrations $q_I \otimes^h q_J : V \otimes^h B \to V/I \otimes^h B/J$ and $q_I \otimes^{\text{tmin}} q_J : V \otimes^{\text{tmin}} B \to V/I \otimes^{\text{tmin}} B/J$. The following well-known result will be used repeatedly.

\begin{prop} (\cite{AS}, Corollary $2.6$) For ideals $I$ and $J$ of $V$ and $B$, respectively, 
     $$\operatorname{ker}(q_I \otimes^h q_J)= V \otimes^h J+ I \otimes^h B.$$
\end{prop}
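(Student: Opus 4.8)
The plan is to identify $\ker(q_I \otimes^h q_J)$ with $V \otimes^h J + I \otimes^h B$ by establishing two inclusions, exploiting the functoriality of the Haagerup tensor product recorded in Proposition 2.1. For the inclusion $V \otimes^h J + I \otimes^h B \subseteq \ker(q_I \otimes^h q_J)$, note that on elementary tensors $v \otimes b$ with $v \in I$ or $b \in J$, the map $q_I \otimes^h q_J$ clearly vanishes, and since such elementary tensors have dense span in $V \otimes^h J + I \otimes^h B$ and $q_I \otimes^h q_J$ is continuous, the inclusion follows. This direction is routine.

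The substantive direction is $\ker(q_I \otimes^h q_J) \subseteq V \otimes^h J + I \otimes^h B$, equivalently that the induced map $\overline{q_I \otimes^h q_J} : (V \otimes^h B)/(V \otimes^h J + I \otimes^h B) \to V/I \otimes^h B/J$ is injective (in fact isometric). The strategy is to produce a completely isometric inverse. First I would observe that the quotient ternary homomorphisms $q_I : V \to V/I$ and $q_J : B \to B/J$ are complete quotient maps, hence complete metric surjections, so by the projectivity/quotient behaviour of the Haagerup tensor product (the ``tmin'' version of Proposition 2.1, or the standard fact that $\otimes^h$ is projective) the map $q_I \otimes^h q_J$ is itself a complete quotient map onto $V/I \otimes^h B/J$, with kernel exactly the closed subspace generated by $I \otimes B + V \otimes J$. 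Since one may always split the quotients $V \to V/I$ and $B \to B/J$ completely contractively at the level of operator spaces (choosing complete isometric copies), lifting back through $q_I \otimes^h q_J$ identifies the kernel precisely.

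Concretely, the cleanest route is to invoke the exactness/projectivity statement for the Haagerup tensor product directly: if $0 \to I \to V \to V/I \to 0$ and $0 \to J \to B \to B/J \to 0$ are the short exact sequences of operator spaces with completely contractive splittings, then tensoring yields that the kernel of $q_I \otimes^h q_J$ is the closure of the algebraic sum $I \otimes B + V \otimes J$, which is by definition $I \otimes^h B + V \otimes^h B \cdot (\text{image of } J)$, i.e. $I \otimes^h B + V \otimes^h J$ as a closed subspace of $V \otimes^h B$. One must check that $I \otimes^h B$ and $V \otimes^h J$ embed completely isometrically as subspaces of $V \otimes^h B$ via Proposition 2.1(2), so that the notation $I \otimes^h B + V \otimes^h J$ is unambiguous and denotes a genuine closed subspace; this is where part (2) of Proposition 2.1 is used.

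The main obstacle I anticipate is purely that this is cited as \cite{AS}, Corollary $2.6$, proved there in the $C^*$-algebra setting $A \otimes^h B$; strictly one should remark that the argument there only uses that $V$ is an operator space (the ternary/algebra structure plays no role in the computation of the kernel as an operator space), so it transfers verbatim with $A$ replaced by the TRO $V$. Hence no genuinely new work is needed beyond this observation and the completely isometric embedding of the summands; the identification of the kernel is a formal consequence of projectivity of $\otimes^h$ together with Proposition 2.1.
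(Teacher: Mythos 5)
The paper does not prove this proposition at all; it is quoted verbatim from Allen--Sinclair--Smith (\cite{AS}, Corollary 2.6) as a known preliminary, so there is no internal proof to compare against. Judged on its own terms, your proposal has a genuine gap in the substantive direction. Projectivity of $\otimes^h$ gives you that $q_I \otimes^h q_J$ is a complete quotient map onto $V/I \otimes^h B/J$, but it does \emph{not} identify the kernel: a complete quotient map can perfectly well have kernel strictly larger than the obvious closed subspace $\overline{I \otimes B + V \otimes J}$, and the assertion that the kernel equals this subspace is exactly the statement to be proved, not a formal consequence of projectivity. Your attempt to close this gap by ``splitting the quotients $V \to V/I$ and $B \to B/J$ completely contractively, choosing complete isometric copies'' fails: closed ideals in $C^*$-algebras and TROs are not in general completely isometrically complemented (consider $K(\mathcal{H}) \subseteq B(\mathcal{H})$), and no such splitting exists. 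The actual content of \cite{AS}, Corollary 2.6 rests on the representation of elements of the Haagerup tensor product as norm-convergent infinite sums $\sum_i a_i \otimes b_i$ with the $a_i$ (or $b_i$) chosen \emph{strongly independent} modulo the ideal, combined with injectivity of $\otimes^h$; one factors $q_I \otimes^h q_J = (q_I \otimes 1)\circ(1 \otimes q_J)$ and computes each one-sided kernel by this independence argument. None of that machinery appears in your proposal, and without it the hard inclusion is unproved. (The closedness of the sum $V \otimes^h J + I \otimes^h B$, which is ASS Theorem 2.5 and is needed for the statement even to make sense as written, is also passed over.)

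Your closing remark that the argument ``transfers verbatim'' to TROs because only the operator-space structure matters is also too quick: strong independence and the convergence conditions $\sum a_i a_i^*$, $\sum b_i^* b_i$ use the multiplicative structure. The clean way to transfer the result, consistent with how the rest of the paper operates, is to embed $V \hookrightarrow \mathcal{A}(V)$ and $I \hookrightarrow \mathcal{A}(I)$ completely isometrically, use injectivity of $\otimes^h$ (Proposition 2.1(2)) to view $V \otimes^h B \subseteq \mathcal{A}(V) \otimes^h B$ and $V/I \otimes^h B/J \subseteq \mathcal{A}(V)/\mathcal{A}(I) \otimes^h B/J$, apply the $C^*$-algebra case to $\mathcal{A}(V)$, and intersect back down using $\mathcal{A}(I) \cap V = I$. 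If you want a self-contained proof rather than a citation, that reduction plus the ASS independence lemma is the route to take.
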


 We will consider two more tensor products  $V \otimes^{\text{tmin}} B$ and $V \otimes^{\text{tmax}} B$  where $V \otimes^{\text{tmin}} B$ denotes the completion of $V \otimes B$ with respect to the operator space injective norm. On the other hand,  $V \otimes^{\text{tmax}} B$  denotes the completion of $V \otimes B$ with respect to the norm given by  
 \[
\|x \|_{\text{tmax}} = \sup \{\|\theta_V\cdot \theta_B(x)\| : \theta_V, \theta_B \text{ are representations of $V$ and $B$, respectively } \}.\]

where more details about $\otimes^{\text{tmax}}$ can be found in \cite{MK_JR}. 

 \begin{prop}
     The canonical map $q:V \otimes^{\text{tmax}} B \to V \otimes^{\text{tmin}} B $ is a completely contractive quotient map.
 \end{prop}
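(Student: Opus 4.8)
The plan is to realise both tensor products as corners of $C^{\ast}$-algebra tensor products of the linking algebra $\mathcal{A}(V)$ with $B$, and then to transfer the assertion from the corresponding classical $C^{\ast}$-algebraic fact. Let $p_1,p_2\in M(\mathcal{A}(V))$ denote the two diagonal corner projections of $\mathcal{A}(V)$, so that $V$ is completely isometrically the $(1,2)$-corner $p_1\mathcal{A}(V)p_2$ and, on the algebraic level, $(p_i\otimes 1)(a\otimes b)(p_j\otimes 1)=p_iap_j\otimes b$.

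First I would identify the minimal side. Since the operator space injective tensor norm is injective and $V\subseteq\mathcal{A}(V)$ completely isometrically, one gets a complete isometry $V\otimes^{\text{tmin}}B\hookrightarrow\mathcal{A}(V)\otimes^{\text{tmin}}B=\mathcal{A}(V)\otimes_{\min}B$ (the operator space injective tensor product of two $C^{\ast}$-algebras being the minimal $C^{\ast}$-tensor product), and its image is exactly the closed corner $(p_1\otimes 1)(\mathcal{A}(V)\otimes_{\min}B)(p_2\otimes 1)$, this corner being automatically closed and containing the algebraic copy of $V\otimes B$ densely.

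Next comes the maximal side, which I expect to be the crux. Here one cannot argue by injectivity; instead I would use the universal description of $\otimes^{\text{tmax}}$ from \cite{MK_JR} together with the bijection between representations of $V$ and of $\mathcal{A}(V)$ recalled in Section $2$. A representation of $V\otimes^{\text{tmax}}B$ amounts to a representation of $V$ together with a commuting representation of $B$; via $\pi\mapsto\mathcal{A}(\pi)$ this is the same as a representation of $\mathcal{A}(V)$ together with a commuting representation of $B$, i.e. a representation of $\mathcal{A}(V)\otimes_{\max}B$. Matching the defining suprema and compressing to the $(1,2)$-corner should give a complete isometry $V\otimes^{\text{tmax}}B=(p_1\otimes 1)(\mathcal{A}(V)\otimes_{\max}B)(p_2\otimes 1)$. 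The delicate point is to check that every representation of the corner $V\otimes^{\text{tmax}}B$ extends to a representation of the whole algebra $\mathcal{A}(V)\otimes_{\max}B$ without enlarging norms, so that the two suprema genuinely coincide; since $\mathcal{A}(V)$ is generated by $V$ this should go through, but it is the one step that is not formal.

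Finally I would conclude by compression. Under the two identifications above, $q$ becomes the compression by $p_1\otimes 1$ and $p_2\otimes 1$ of the canonical $\ast$-homomorphism $\rho:\mathcal{A}(V)\otimes_{\max}B\to\mathcal{A}(V)\otimes_{\min}B$ (both maps restrict to the identity on $\mathcal{A}(V)\otimes B$), and $\rho(p_i\otimes 1)=p_i\otimes 1$. Being a surjective $\ast$-homomorphism, $\rho$ is a complete quotient map; and compressing a complete quotient $\ast$-homomorphism $\sigma:\mathcal{A}\to\mathcal{C}$ by projections $e,f\in M(\mathcal{A})$ yields a complete quotient map $e\mathcal{A}f\to\sigma(e)\mathcal{C}\sigma(f)$, since any $v\in M_n(\sigma(e)\mathcal{C}\sigma(f))$ with $\|v\|<1$ lifts to $a\in M_n(\mathcal{A})$ with $\sigma^{(n)}(a)=v$ and $\|a\|<1$, whence $u=(e\otimes I_n)a(f\otimes I_n)$ lies in $M_n(e\mathcal{A}f)$, satisfies $\sigma^{(n)}(u)=v$ (as $v=(\sigma(e)\otimes I_n)v(\sigma(f)\otimes I_n)$) and has $\|u\|\le\|a\|<1$. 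Taking $e=p_1\otimes 1$ and $f=p_2\otimes 1$ shows that $q$ is a completely contractive quotient map.
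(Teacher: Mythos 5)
The paper does not actually prove this proposition: it is stated as a preliminary fact, with \cite{MK_JR} cited just beforehand for the construction of $\otimes^{\text{tmax}}$, so there is no argument of the authors' own to compare yours against. Your corner-and-compression strategy is the natural proof and is essentially how the statement is justified in the literature. The minimal-side identification is routine by injectivity of the operator space injective norm, and your final compression lemma is correct (note only that one needs $\rho$ to extend to multipliers so that $\rho(p_i\otimes 1)=p_i\otimes 1$ makes sense, which holds since $\rho$ is a surjective $*$-homomorphism, and that norm-controlled lifts through $C^*$-quotients supply the required $a$ with $\|a\|<1$). The one step you flag as ``not formal'' --- the complete isometry $V\otimes^{\text{tmax}}B\cong(p_1\otimes 1)\bigl(\mathcal{A}(V)\otimes_{\max}B\bigr)(p_2\otimes 1)$ --- is indeed the only real content: given a commuting pair $(\theta_V,\theta_B)$ with $\theta_V:V\to B(\mathcal{H},\mathcal{K})$ and $\theta_B:B\to B(\mathcal{H})$, one extends $\theta_B$ to the other Hilbert space by setting $\theta_B'(b)\,\theta_V(v)\xi=\theta_V(v)\,\theta_B(b)\xi$ on the essential subspace, with boundedness coming from the commutation of $\theta_B(B)$ with $\theta_V(V)^*\theta_V(V)$, so that $\mathcal{A}(\theta_V)$ together with $\theta_B'\oplus\theta_B$ is a commuting pair for $\mathcal{A}(V)\otimes_{\max}B$ whose compression recovers $\theta_V\cdot\theta_B$; this is what makes the two suprema coincide. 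Since that identification is precisely what Kaur and Ruan establish in \cite{MK_JR}, in the context of this paper you could simply cite it rather than re-derive it, and with that reference in place your proof is complete.
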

The following result ensures that $V \otimes^h B$ can be considered as a subspace of $V\otimes^{\text{tmin}} B$ with a different norm.

 \begin{prop} (\cite{AKKKK}, Proposition $4.9$)
Let $V$ be a TRO and $B$ a $C^{\ast}$-algebra. Then the identity map $\epsilon: V \otimes^h B \to V \otimes^{\text{tmin}} B$ is injective.  
\end{prop}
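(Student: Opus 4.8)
The plan is to reduce the claim to the classical fact that for $C^{\ast}$-algebras $A$ and $C$ the identity map $A \otimes^{h} C \to A \otimes^{\min} C$ is injective, by passing from $V$ to its linking $C^{\ast}$-algebra $\mathcal{A}(V)$.

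First recall that $V$ sits inside $\mathcal{A}(V)$ as the $(1,2)$-corner, and that this inclusion $\iota\colon V \hookrightarrow \mathcal{A}(V)$ is a (linear) complete isometry. Tensoring $\iota$ with $\mathrm{id}_{B}$ and applying Proposition 9.2.5(2) of \cite{ER} yields a complete isometry
\[
\iota_{h}\colon V \otimes^{h} B \hookrightarrow \mathcal{A}(V) \otimes^{h} B .
\]
Since the operator space injective tensor product is injective with respect to complete isometries of operator spaces, the same inclusion also induces a (completely isometric) map $\iota_{\min}\colon V \otimes^{\text{tmin}} B \to \mathcal{A}(V) \otimes^{\text{tmin}} B$; and for the pair of $C^{\ast}$-algebras $\mathcal{A}(V)$, $B$ the operator space injective tensor product coincides with the minimal $C^{\ast}$-tensor product, so $\mathcal{A}(V) \otimes^{\text{tmin}} B = \mathcal{A}(V) \otimes^{\min} B$.

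On the algebraic tensor product $V \otimes B$ each of the four maps $\epsilon$, $\iota_{h}$, $\iota_{\min}$, and the canonical map $\mathcal{A}(V)\otimes^{h} B \to \mathcal{A}(V)\otimes^{\min} B$ is induced either by $\iota$ or by an identity, so the evident square commutes on $V \otimes B$; as all four maps are bounded and $V \otimes B$ is dense in $V \otimes^{h} B$, the square commutes on all of $V \otimes^{h} B$. Now suppose $x \in V \otimes^{h} B$ with $\epsilon(x) = 0$. Chasing the square, the image of $\iota_{h}(x)$ under $\mathcal{A}(V)\otimes^{h} B \to \mathcal{A}(V)\otimes^{\min} B$ equals $\iota_{\min}(\epsilon(x)) = 0$; by the classical injectivity result for the Haagerup tensor product of $C^{\ast}$-algebras this forces $\iota_{h}(x) = 0$, and since $\iota_{h}$ is a complete isometry we conclude $x = 0$.

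The only genuinely external input is the injectivity of $A \otimes^{h} C \to A \otimes^{\min} C$ for $C^{\ast}$-algebras $A, C$, which is well known (see, e.g., \cite{AS} and the references therein on the Haagerup tensor product of $C^{\ast}$-algebras); everything else is bookkeeping — that $V \hookrightarrow \mathcal{A}(V)$ is completely isometric, the functoriality of $\otimes^{h}$ and of the operator space injective tensor product, and the identification $\otimes^{\text{tmin}} = \otimes^{\min}$ on $C^{\ast}$-algebras. I expect the one point deserving care is making the commuting square precise on the dense subspace $V \otimes B$ before passing to completions, since it is exactly the compatibility of the Haagerup and injective norms under $\iota$ that legitimizes the diagram chase.
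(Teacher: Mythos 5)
The paper does not prove this statement: it is quoted verbatim as a preliminary result from \cite{AKKKK} (Proposition 4.9), so there is no in-paper argument to compare against. Your proof is correct and is the natural argument: embed $V$ completely isometrically as the off-diagonal corner of $\mathcal{A}(V)$, use functoriality of $\otimes^{h}$ for complete isometries and injectivity of the operator space injective tensor product to get a commuting square, and reduce to the classical injectivity of $A \otimes^{h} C \to A \otimes^{\min} C$ for $C^{\ast}$-algebras; this matches the definition of $\otimes^{\text{tmin}}$ used in this paper (completion in the operator space injective norm) and is almost certainly the same reduction used in the cited source. The one external fact you rely on, injectivity of the Haagerup-to-minimal map for $C^{\ast}$-algebras, is indeed standard (it underlies the whole framework of \cite{AS}), so no gap remains.
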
 

In \cite{AKKKK}, the notions of \(\epsilon\)-ideals and \(i\)-ideals were introduced. For the reader's convenience, we recall their definitions.

\begin{defn}  Let $P$ be a closed subspace of $V \otimes^h B$.
\begin{enumerate}
    \item[(i)] $P$ is called an \(\epsilon\)-ideal if there exists a closed ideal \( Q \) of \( V \otimes^{\text{tmin}} B \) such that \( P = \epsilon^{-1}(Q) \), where \( \epsilon: V \otimes^h B \to V \otimes^{\text{tmin}} B \) is the canonical injective map. Equivalently,  \( P \) is an \(\epsilon\)-ideal precisely when \( P = Q \cap (V \otimes^h B) \) for some closed ideal \( Q \) in \( V \otimes^{\text{tmin}} B \).  

\item[(ii)] $P$ is called an \( i \)-ideal if there exists a closed ideal \( Q \) of \( \mathcal{A}(V) \otimes^h B \) such that \( P = i^{-1}(Q) \), where \( i: V \otimes^h B \to \mathcal{A}(V)\otimes^h B \) is the natural isometry. Equivalently, \( P \) is an \( i \)-ideal if and only if \( P = Q \cap (V \otimes^h B) \) for some closed ideal \( Q \) in \( \mathcal{A}(V) \otimes^h B \).  
\end{enumerate}

\end{defn}

It has been established that a closed subspace \( P \) of \( M \otimes^h B \) is an \(\epsilon\)-ideal if and only if it is an \( i \)-ideal (\cite{AKKKK}, Proposition $4.12$). The following notion of primitive ideals in $V \otimes^h B$ was introduced in \cite{AA}.

\begin{defn}
    A linear map $\pi: V \otimes^h B \to B(\mathcal{H}, \mathcal{K})$ is called a (irreducible) representation of $V \otimes^h B$ if there exists a (irreducible) representation $\rho$ of $V \otimes^{\text{tmax}} B$ such that $\pi= \rho  \epsilon' $ where $\epsilon': V \otimes^h B \to V \otimes^{\text{tmax}} B$ is natural injective map. A closed subspace $P$ of $V \otimes^h B$ is called a primitive ideal if $P$ is the kernel of some irreducible representation  of $V \otimes^h B$.
\end{defn}

\begin{defn}
    Given a topological space $X$, we will denote by $C^b(X)$ the algebra of bounded continuous complex valued functions on $X$. The complete regularization $\rho X$ of $X$ is the quotient space obtained by identifying points $x_1, x_2 \in X$ whenever  $f(x_1) = f(x_2) \quad \text{for all }  f \in C^b(X)$. The quotient map $\rho_X: X \to \rho X$ sends each $x \in X$ to its equivalence class $[x]$ in $\rho X$. For each function $f \in C^b(X)$, we define its induced function $f^\rho$ on $\rho X$ by  $f^\rho([x]) = f(x)$. The topology of $\rho X$, denoted by $\tau_{cr}$, is induced by the function $\{f^{\rho}: f \in C^b(X) \}$. It is known that the space $\rho X$ is completely regular and the collection of cozero sets  
\[
\mathrm{coz}(f^\rho) = \{ [x] \in \rho X : f^\rho([x]) \neq 0 \}, \quad f \in C^b(X),
\]  
forms a basis for the $\tau_{cr}$ topology (\cite{rings}, Theorem $3.9$). 
\end{defn}

Let $X$ and $Y$ be topological spaces. We denote by $\rho X \times \rho Y$, the product space equipped with the product topology $\tau_p$, and by $\rho (X \times Y)$ the space endowed with the topology $\tau_{cr}$. The following result, that establishes a natural identification between $\rho X \times \rho Y$ and $\rho (X \times Y)$ as sets, will be useful in the final section of this paper. 

\begin{prop}
    (\cite{Glimm}, Lemma $1.1$) For topological spaces $X$ and $Y$, there is an open bijection $\rho X \times \rho Y \to \rho (X \times Y)$ sending $(\rho_X(x), \rho_Y(y)) \to \rho_{X \times Y}(x, y)$.
\end{prop}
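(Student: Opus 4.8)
The plan is to introduce the candidate map $\Psi\colon \rho X \times \rho Y \to \rho(X\times Y)$ by $\Psi(\rho_X(x),\rho_Y(y)) = \rho_{X\times Y}(x,y)$ and to check, in order, that it is well defined, a bijection, and open. I would deliberately \emph{not} try to prove that $\Psi$ itself is continuous: in general $C^b(X\times Y)$ is much larger than the uniform closure of the span of the product functions $f\otimes g$, so $h^\rho\circ\Psi$ need not be continuous for an arbitrary $h\in C^b(X\times Y)$, which is precisely why the statement claims an open bijection rather than a homeomorphism.

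For well-definedness, suppose $\rho_X(x_1)=\rho_X(x_2)$ and $\rho_Y(y_1)=\rho_Y(y_2)$. Fixing $h\in C^b(X\times Y)$, I would use the ``slicing'' observation that $x\mapsto h(x,y_1)$ belongs to $C^b(X)$ and $y\mapsto h(x_2,y)$ belongs to $C^b(Y)$, so these slices factor through $\rho_X$ and $\rho_Y$ respectively; hence $h(x_1,y_1)=h(x_2,y_1)=h(x_2,y_2)$. As $h$ was arbitrary, $\rho_{X\times Y}(x_1,y_1)=\rho_{X\times Y}(x_2,y_2)$, so $\Psi$ is well defined. Feeding the same argument the functions $(x,y)\mapsto f(x)$ with $f\in C^b(X)$ and $(x,y)\mapsto g(y)$ with $g\in C^b(Y)$ shows conversely that $\rho_{X\times Y}(x_1,y_1)=\rho_{X\times Y}(x_2,y_2)$ forces $\rho_X(x_1)=\rho_X(x_2)$ and $\rho_Y(y_1)=\rho_Y(y_2)$, i.e. $\Psi$ is injective; surjectivity is immediate, since every class in $\rho(X\times Y)$ has a representative $(x,y)\in X\times Y$ and thus equals $\Psi(\rho_X(x),\rho_Y(y))$.

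For openness — which I expect to be the only step requiring an idea rather than bookkeeping — I would first note that, since the cozero sets $\mathrm{coz}(f^\rho)$ form a basis for $\tau_{cr}$ on $\rho X$ and likewise the cozero sets of $\rho Y$ form a basis there, the rectangles $\mathrm{coz}(f^\rho)\times\mathrm{coz}(g^\rho)$ with $f\in C^b(X)$, $g\in C^b(Y)$ form a basis for the product topology on $\rho X\times\rho Y$; hence it suffices to show $\Psi$ sends each such rectangle to an open set. Given $f$ and $g$, put $h(x,y)=f(x)g(y)\in C^b(X\times Y)$. Since $h(x,y)\neq 0$ exactly when $f(x)\neq 0$ and $g(y)\neq 0$, one gets
\[
\Psi\bigl(\mathrm{coz}(f^\rho)\times\mathrm{coz}(g^\rho)\bigr)=\{\rho_{X\times Y}(x,y): h(x,y)\neq 0\}=\mathrm{coz}(h^\rho),
\]
which is a basic $\tau_{cr}$-open subset of $\rho(X\times Y)$. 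As the image of an arbitrary open set is a union of such images, $\Psi$ is open. The single subtle point is exactly this asymmetry: product functions $f\otimes g$ are enough to push a basic rectangle forward onto a cozero set downstairs, even though they are not enough to pull an arbitrary cozero set of $\rho(X\times Y)$ back to an open set of $\rho X\times\rho Y$.
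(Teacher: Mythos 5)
Your proof is correct. Note that the paper itself offers no proof of this proposition --- it is quoted verbatim from the cited reference (McConnell, Lemma 1.1) --- so there is nothing internal to compare against; your argument (well-definedness and injectivity via slice functions and coordinate-pullbacks, openness by pushing the basic cozero rectangles $\mathrm{coz}(f^\rho)\times\mathrm{coz}(g^\rho)$ forward onto $\mathrm{coz}((f\otimes g)^\rho)$) is exactly the standard one, and your side remark correctly identifies why only openness, not continuity, can be expected: $\rho_X\times\rho_Y$ need not be a quotient map, and $C^b(X\times Y)$ is generally larger than the closure of the span of product functions.
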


Another natural topology on $\rho X$ is the quotient topology $\tau_q$, induced by the map $\rho_X$. Since $\rho_X$ is continuous as a map into $(\rho X, \tau_{cr})$, $\tau_q$ is always finer than $\tau_{cr}$.

The space of primitive ideals $\mathrm{Prim}(V)$ and $\mathrm{Prim}(V \otimes^h B)$ is not necessarily completely regular in the $\tau_w$-topology. To address this, we employ complete regularization, defining the Glimm ideal space as the quotient modulo inseparability by continuous functions. By a Glimm ideal, we mean the ideal given by taking the intersection of the primitive ideals in a given equivalence class. This construction provides a natural framework for analyzing the behavior of Glimm ideals under the Haagerup tensor product.

\section{Ideals of a TRO}  

In the study of ideals, the concept of primal, factorial, and Glimm ideals have been well-explored in various contexts, particularly for $C^{\ast}$-algebras. However, these concepts in the setting of TROs have not been as thoroughly investigated. In this section, we explore these three classes of ideals within TROs. Although most of the results are straightforward and follow directly from known results on the linking $C^{\ast}$-algebra of a TRO, we include the proofs here for completeness. The results presented will be crucial for understanding the primal, factorial and Glimm ideals of $V \otimes^h B$ in the later sections of the paper.

\begin{defn}

A representation \( \pi: V \to B(\mathcal{H}, \mathcal{K}) \) is called a \textit{factorial representation} if the von Neumann algebra \( \mathcal{M} \) generated by \( \{ \pi(v) : v \in V \} \subseteq B(\mathcal{H} \oplus \mathcal{K}) \) is a factor, i.e., its center consists only of scalar multiples of the identity:
\[
Z(\mathcal{M}) = \mathbb{C} (I_{\mathcal{H}} \oplus I_{\mathcal{K}}).
\]
    
\end{defn}

Notably, the von Neumann algebra generated by \( \{ \pi(v) : v \in V \} \subseteq B(\mathcal{H} \oplus \mathcal{K}) \) coincides with the von Neumann algebra generated by \( \{ \mathcal{A}(\pi)(x) : x \in \mathcal{A}(V) \} \subseteq B(\mathcal{H} \oplus \mathcal{K}) \). This leads to the following:

\begin{prop}
A representation \( \pi \) of \( V \) is a factorial representation if and only if the associated representation \( \mathcal{A}(\pi) \) of \( \mathcal{A}(V) \) is a factorial representation.
\end{prop}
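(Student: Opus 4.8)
The plan is to reduce the statement entirely to the remark that immediately precedes it, namely that the von Neumann algebra generated by $\{\pi(v) : v \in V\}$ inside $B(\mathcal{H}\oplus\mathcal{K})$ coincides with the von Neumann algebra generated by $\{\mathcal{A}(\pi)(x) : x \in \mathcal{A}(V)\}$. Call this common von Neumann algebra $\mathcal{M}$. Since, by definition, $\pi$ is a factorial representation of $V$ precisely when $Z(\mathcal{M}) = \mathbb{C}(I_{\mathcal{H}}\oplus I_{\mathcal{K}})$, and $\mathcal{A}(\pi)$ is a factorial representation of $\mathcal{A}(V)$ precisely when the center of the von Neumann algebra it generates is $\mathbb{C}(I_{\mathcal{H}}\oplus I_{\mathcal{K}})$, the equivalence is immediate once the two generating sets are shown to produce the same von Neumann algebra.

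So the only real content is to justify the coincidence of the two von Neumann algebras, i.e. to verify the preceding remark carefully. First I would recall the concrete form of $\mathcal{A}(V)$ as the $2\times 2$ matrix algebra with corners $C(V)$, $V$, $V^*$, $D(V)$, where $C(V)$ and $D(V)$ are generated by $VV^*$ and $V^*V$, and recall that $\mathcal{A}(\pi)$ restricts to $\pi$ on the $V$-corner and to $v \mapsto \pi(v)^*$ on the $V^*$-corner, while on the diagonal corners it is determined multiplicatively, sending a typical generator $vw^*$ of $C(V)$ to $\pi(v)\pi(w)^*$ and $v^*w$ of $D(V)$ to $\pi(v)^*\pi(w)$. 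From this one sees that $\mathcal{A}(\pi)(\mathcal{A}(V))$ is contained in the $*$-algebra generated by $\pi(V)$ inside $B(\mathcal{H}\oplus\mathcal{K})$: every generator of every corner is a word in the $\pi(v)$ and their adjoints. Conversely, $\pi(V) = \mathcal{A}(\pi)$ applied to the $V$-corner of $\mathcal{A}(V)$, so $\pi(V) \subseteq \mathcal{A}(\pi)(\mathcal{A}(V))$. Hence the $*$-algebra generated by $\pi(V)$ equals the $*$-algebra $\mathcal{A}(\pi)(\mathcal{A}(V))$ (the latter is already a $*$-algebra, being a $*$-homomorphic image of a $C^*$-algebra). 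Taking weak-operator (or strong-operator) closures, the generated von Neumann algebras coincide by the bicommutant theorem; this uses that $V$ acts nondegenerately enough that the identity of $\mathcal{M}$ is $I_{\mathcal{H}}\oplus I_{\mathcal{K}}$, which is exactly the convention built into the definition of factorial representation.

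I would then close the argument in one line: with $\mathcal{M}$ denoting this common von Neumann algebra, $Z(\mathcal{M}) = \mathbb{C}(I_{\mathcal{H}}\oplus I_{\mathcal{K}})$ is simultaneously the condition that $\pi$ be factorial and the condition that $\mathcal{A}(\pi)$ be factorial, so the two properties are equivalent.

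The main (and only genuine) obstacle is the bookkeeping in the preceding paragraph: making sure that the passage between the $*$-algebra generated by $\pi(V)$ and the image $\mathcal{A}(\pi)(\mathcal{A}(V))$ is exact — in particular that $\mathcal{A}(\pi)$ really does carry the diagonal corners onto the span of products $\pi(v)\pi(w)^*$ and $\pi(v)^*\pi(w)$ and that no completion issues intervene before taking the von Neumann closure. This is routine given the explicit description of $\mathcal{A}(V)$ and the defining relation $\pi(xy^*z) = \pi(x)\pi(y)^*\pi(z)$, and indeed the remark preceding the proposition already asserts it; one could alternatively simply cite the construction of $\mathcal{A}(\pi)$ from \cite{AAV} and note that it is the identity on $\pi(V)$ and has image the $*$-algebra they generate, after which the von Neumann algebras and hence their centers agree.
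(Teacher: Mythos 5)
Your proposal is correct and follows exactly the paper's route: the paper simply records, in the remark immediately preceding the proposition, that the von Neumann algebras generated by $\pi(V)$ and by $\mathcal{A}(\pi)(\mathcal{A}(V))$ coincide, and then deduces the equivalence of the two factoriality conditions from the common center. You merely supply the corner-by-corner verification of that remark, which the paper leaves implicit.
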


\begin{defn}
    An ideal $I$ of $V$ is called a \emph{factorial} ideal of $V$ if $I= \operatorname{ker}(\pi),$ for some  factorial representation $\pi$ of $V$.
\end{defn}

The following result follows immediately from Proposition \ref{result1} and (\cite{AAV}, Lemma $2.7$). 

\begin{prop} \label{facmain}
    An ideal $I$ of $V$ is factorial if and only if $\mathcal{A}(I)$ is factorial ideal of $\mathcal{A}(V)$.
\end{prop}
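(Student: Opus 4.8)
The plan is to leverage the established dictionary between representations of $V$ and representations of $\mathcal{A}(V)$, together with Proposition~\ref{result1}, which guarantees that the hull-kernel operation on ideals of $V$ behaves exactly as in the $C^*$-algebra case. The key auxiliary fact quoted is (\cite{AAV}, Lemma~$2.7$), which should relate $\mathcal{A}(\ker\pi)$ to $\ker\mathcal{A}(\pi)$; I would first make that correspondence precise, namely that for any representation $\pi$ of $V$ one has $\mathcal{A}(\ker\pi)=\ker\mathcal{A}(\pi)$, and dually that every ideal of $\mathcal{A}(V)$ that is the kernel of a representation of $\mathcal{A}(V)$ arises as $\mathcal{A}(\pi)$ for a representation $\pi$ of $V$ (using the one-to-one correspondence between representations recalled after the definition of $\mathcal{A}(\pi)$).

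The forward direction: suppose $I=\ker\pi$ for a factorial representation $\pi$ of $V$. By the previous proposition, $\mathcal{A}(\pi)$ is a factorial representation of $\mathcal{A}(V)$, and by the kernel identity $\mathcal{A}(I)=\mathcal{A}(\ker\pi)=\ker\mathcal{A}(\pi)$, so $\mathcal{A}(I)$ is a factorial ideal of $\mathcal{A}(V)$. For the converse: suppose $\mathcal{A}(I)$ is factorial, say $\mathcal{A}(I)=\ker\sigma$ for a factorial representation $\sigma$ of $\mathcal{A}(V)$. Using the correspondence between representations, write $\sigma=\mathcal{A}(\pi)$ for some representation $\pi$ of $V$; by the proposition above, $\pi$ is a factorial representation of $V$. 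Then $\mathcal{A}(\ker\pi)=\ker\mathcal{A}(\pi)=\ker\sigma=\mathcal{A}(I)$, and since $\theta=\mathcal{A}(\cdot)$ is injective on $\operatorname{Id}(V)$ (it is a homeomorphism onto its image, and injectivity on all of $\operatorname{Id}(V)$ follows from Proposition~\ref{result1}), we conclude $\ker\pi=I$, so $I$ is factorial.

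The main obstacle I anticipate is purely bookkeeping: pinning down the exact statement of (\cite{AAV}, Lemma~$2.7$) and confirming it delivers $\mathcal{A}(\ker\pi)=\ker\mathcal{A}(\pi)$ in the precise form needed, and checking that the representation-level correspondence $\pi\leftrightarrow\mathcal{A}(\pi)$ respects the "factorial" property in both directions (which is exactly the content of the displayed Proposition immediately preceding). Once those two ingredients are in hand, together with injectivity of $\theta$ on $\operatorname{Id}(V)$ coming from Proposition~\ref{result1}, the equivalence is a short two-way implication with no further analytic input required. I would present it in three or four lines citing the Proposition on factorial representations, Proposition~\ref{result1}, and (\cite{AAV}, Lemma~$2.7$).
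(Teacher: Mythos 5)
Your argument is correct and is exactly the route the paper intends: the paper gives no written proof, merely asserting that the result "follows immediately from Proposition~\ref{result1} and (\cite{AAV}, Lemma~$2.7$)," and your two-way implication via the factorial-representation correspondence $\pi\leftrightarrow\mathcal{A}(\pi)$, the kernel identity $\mathcal{A}(\ker\pi)=\ker\mathcal{A}(\pi)$, and injectivity of $\theta$ on ideals is precisely the expansion of that citation. No gaps.
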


\begin{defn}
  
 An ideal $P$ of a TRO $V$ is called primal if, whenever $J_1, J_2,  \newline \dots, J_{2n+1}$, $n\in \ N$, are ideals in $V$ with $J_1J_2 \dotsm  J_{2n+1} = \{0\}$, then $J_k \subseteq P$ for at least one value of $k$. We  denote the set of all primal ideals of $V$ by $\operatorname{Primal}(V)$.
  
\end{defn}

\begin{rem}
    It is easy to see that every  prime ideal of a TRO is primal. Let $P$ and $Q$ be ideals of a TRO $V$ such that $P \subseteq Q$. If  $P$ is primal then $Q$ is also Primal.
\end{rem}

 The following proposition establishes that the restriction of $\theta$ to primal ideals is also a bijection.

\begin{prop} \label{primalmain}
    $P$ is a primal ideal of $V$ if and only if $\mathcal{A}(P)$ is a primal ideal of $\mathcal{A}(V)$.
\end{prop}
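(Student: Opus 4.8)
The plan is to reduce the statement to the corresponding fact for $C^{\ast}$-algebras by exploiting the order-isomorphism between $\operatorname{Id}(V)$ and $\operatorname{Id}(\mathcal{A}(V))$ furnished by $\theta$, together with the way the ternary product on $V$ interacts with the multiplicative structure on $\mathcal{A}(V)$. Recall that $\mathcal{A}(I)$ is the ideal $\begin{bmatrix} C(I) & I \\ I^{*} & D(I)\end{bmatrix}$ of $\mathcal{A}(V)$, and that $I \mapsto \mathcal{A}(I)$ is an inclusion-preserving bijection from $\operatorname{Id}(V)$ onto $\operatorname{Id}(\mathcal{A}(V))$ with inverse $L \mapsto L \cap V$ (this is implicit in Theorem~\ref{thm2} and the surrounding discussion). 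The first step is therefore to record the multiplicative compatibility: for ideals $I, J, K$ of $V$ one has, up to taking closed spans, a relation linking the product $IJ^{*}K$ (equivalently the ideal generated by products of an odd number of the $I$'s) inside $V$ with the $C^{\ast}$-algebra product $\mathcal{A}(I)\mathcal{A}(J)\mathcal{A}(K)$ inside $\mathcal{A}(V)$. Concretely, $\mathcal{A}(I)\mathcal{A}(J)\mathcal{A}(K)$ is again of the form $\mathcal{A}(L)$ for a suitable ideal $L$ of $V$, and $\mathcal{A}(I_{1})\mathcal{A}(I_{2})\cdots\mathcal{A}(I_{2n+1}) = \{0\}$ if and only if the corresponding $(2n+1)$-fold ternary product of the $I_{j}$ in $V$ is $\{0\}$.

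Granting that dictionary, the proof is a two-way implication. For the forward direction, assume $P$ is primal in $V$ and let $L_{1}, \dots, L_{2n+1}$ be ideals of $\mathcal{A}(V)$ with $L_{1}L_{2}\cdots L_{2n+1} = \{0\}$. Write $L_{j} = \mathcal{A}(J_{j})$ for ideals $J_{j}$ of $V$; by the compatibility step the corresponding ternary product of the $J_{j}$ vanishes in $V$, so primality of $P$ gives $J_{k} \subseteq P$ for some $k$, whence $L_{k} = \mathcal{A}(J_{k}) \subseteq \mathcal{A}(P)$, and $\mathcal{A}(P)$ is primal. (One should note that since $\theta$ maps $\operatorname{Id}'(V)$ onto $\operatorname{Id}'(\mathcal{A}(V))$, $\mathcal{A}(P)$ is a proper ideal.) Conversely, if $\mathcal{A}(P)$ is primal in $\mathcal{A}(V)$ and $J_{1}, \dots, J_{2n+1}$ are ideals of $V$ with the $(2n+1)$-fold ternary product zero, then $\mathcal{A}(J_{1})\cdots\mathcal{A}(J_{2n+1}) = \{0\}$, primality of $\mathcal{A}(P)$ yields $\mathcal{A}(J_{k}) \subseteq \mathcal{A}(P)$ for some $k$, and applying the inverse map (intersecting with $V$) gives $J_{k} \subseteq P$.

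I expect the only real obstacle to be the bookkeeping in the compatibility step, i.e.\ showing precisely that a product of $(2n+1)$ linking-algebra ideals $\mathcal{A}(J_{1})\cdots\mathcal{A}(J_{2n+1})$ is controlled by — and is zero exactly when — the ternary product $J_{1}J_{2}^{*}J_{3}\cdots J_{2n+1}$ (suitably bracketed) is zero. The matrix computation is elementary: multiplying the $2\times 2$ block matrices one sees each entry of the product is a sum of alternating products of the $J_{j}$ and $J_{j}^{*}$, and the corner entries reproduce exactly the ternary words; one also uses that $C(J) = \overline{JJ^{*}}$ and $D(J) = \overline{J^{*}J}$ so that an odd product lands back in a linking-algebra ideal. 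A clean way to organize this, avoiding induction on $n$, is to first handle the case of three ideals, observe $\mathcal{A}(I)\mathcal{A}(J)\mathcal{A}(K) = \mathcal{A}(M)$ where $M = \overline{IJ^{*}K} + \overline{(\text{other corner terms})}$, check that $M = \{0\}$ iff the length-three ternary product vanishes, and then iterate, grouping the $2n+1$ factors three at a time. Alternatively, one can bypass the computation entirely by invoking Proposition~\ref{facmain}'s style of argument: primal ideals of a $C^{\ast}$-algebra $A$ are exactly the ideals $P$ such that $\bigcap$ of every finite subset of $\operatorname{hull}(P)$-neighborhood... — but the direct matrix approach is the most transparent and self-contained, so that is what I would carry out.
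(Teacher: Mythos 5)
Your overall strategy coincides with the paper's: both directions are transferred through the inclusion-preserving bijection $I\mapsto\mathcal{A}(I)$, and the whole content is concentrated in the ``compatibility'' statement that $\mathcal{A}(I_1)\cdots\mathcal{A}(I_m)=\{0\}$ exactly when the corresponding odd ternary product vanishes in $V$. Where you diverge is in how that step is established. The paper avoids any block-matrix computation: it uses the fact that a product of closed ideals in a $C^{*}$-algebra equals their intersection, that $\mathcal{A}$ preserves intersections (so $\mathcal{A}(I_1)\cdots\mathcal{A}(I_n)=\mathcal{A}(I_1\cap\cdots\cap I_n)$), and the cited Lemma~1 of the reference \cite{AA}, which says that an odd product of closed ideals of a TRO equals their intersection. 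With these three facts the equivalence is a two-line calculation. Your proposed route --- computing $\mathcal{A}(I)\mathcal{A}(J)\mathcal{A}(K)$ entrywise and iterating in groups of three --- is the part you explicitly defer, and it is not as harmless as you suggest: the direction ``ternary product zero $\Rightarrow$ linking product zero'' requires showing that vanishing of the corner word $IJ^{*}K$ forces vanishing of the diagonal words such as $C(I)C(J)C(K)$, and the natural way to see this is again via the intersection identities ($C(I)\cap C(J)\cap C(K)=C(I\cap J\cap K)$), at which point you have reproduced the paper's argument in disguise. So the matrix computation buys nothing and hides the one lemma that actually matters; I would state and use the product-equals-intersection facts directly.

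One further point: in the forward direction you only test $\mathcal{A}(P)$ against products of an \emph{odd} number of ideals, but primality in a $C^{*}$-algebra is defined for an arbitrary finite number of factors. This is easily repaired --- if $L_1\cdots L_n=\{0\}$ with $n$ even, then $L_1L_1L_2\cdots L_n\subseteq L_1\cdots L_n=\{0\}$ gives an odd product to which your argument applies --- and the paper handles the even case by exactly this duplication trick, but as written your proof does not cover it.
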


\begin{proof}
 Let $P$ be a primal ideal of $V$. Suppose $J_1, J_2, \dotsc, J_n$ are ideals of $\mathcal{A}(V)$ satisfying $J_1 J_2 \dotsm J_n = \{0\}$. Since $J_i = \mathcal{A}(I_i)$ for some ideal $I_i$ of $V$, we have
    \begin{align*}
        \mathcal{A}(I_1)\mathcal{A}(I_2) \dotsm \mathcal{A}(I_n) &= \mathcal{A}(I_1) \cap \mathcal{A}(I_2) \cap \dotsm \cap \mathcal{A}(I_n) \\
        &= \mathcal{A}(I_1 \cap I_2 \cap \dotsm \cap I_n) = \{0\}.
    \end{align*}
    Hence, $I_1 \cap I_2 \cap \dotsm \cap I_n = \{0\}$. Now, if $n$ is odd, then by (\cite{AA}, Lemma $1$), $I_1I_2 \dotsm I_n = I_1 \cap I_2 \cap \dotsm \cap I_n = \{0\}$, therefore $I_k \subseteq P$ for some $k$, implying $\mathcal{A}(I_k) \subset \mathcal{A}(P)$. In the case where $n$ is even, $I_1 \cap I_2 \cap \dotsm \cap I_n = \{0\}$ implies $I_1 \cap I_1 \cap I_2 \cap \dotsm \cap I_n = \{0\}$, so by a reasoning similar to that in the case where $n$ is odd, we are done.

    Conversely, assume that $\mathcal{A}(P)$ is a primal ideal of $\mathcal{A}(V)$, and $I_1, I_2, \dotsc, I_{2 n+1}$ are ideals of $V$ satisfying $I_1 \cap I_2 \cap \dotsm \cap I_{2n+1} = \{0\}$. Then $\mathcal{A}(I_1)\mathcal{A}(I_2) \dotsm \mathcal{A}(I_{2 n+1}) = \{0\}$, implying $\mathcal{A}(I_k) \subseteq \mathcal{A}(P)$ for some $k$, which gives $I_k \subseteq P$.
\end{proof}

\pagebreak
The following proposition provides a characterization of primal ideals in terms of the convergence of primitive ideals.

\begin{prop}\label{prop34}
 $P$ is a Primal ideal of $V$ if and only if there exists a net $P_{\alpha} \in \operatorname{Prim}(V)$ such that $P_{\alpha}$ converges to $Q$ in $\tau_w$-topology for all $Q \in \operatorname{hull}(P)$.
\end{prop}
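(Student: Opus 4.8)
The plan is to transfer the statement to the linking $C^{\ast}$-algebra $\mathcal{A}(V)$, where the corresponding characterization of primal ideals is classical (a closed ideal $P$ of a $C^{\ast}$-algebra is primal iff there is a net in $\operatorname{Prim}$ converging to every point of $\operatorname{hull}(P)$), and then pull everything back along the homeomorphisms $\theta$ of Theorem~\ref{thm2}. First I would record the bookkeeping: by Proposition~\ref{primalmain}, $P$ is primal in $V$ iff $\mathcal{A}(P)$ is primal in $\mathcal{A}(V)$; by Theorem~\ref{thm2}(ii), $\theta|_{\operatorname{Prim}(V)}$ is a homeomorphism onto $\operatorname{Prim}(\mathcal{A}(V))$, so a net $P_\alpha$ in $\operatorname{Prim}(V)$ converges to $Q$ in $\tau_w$ iff $\mathcal{A}(P_\alpha)\to\mathcal{A}(Q)$ in $\operatorname{Prim}(\mathcal{A}(V))$; and by Proposition~\ref{result1} together with the order-preserving bijection $\theta$ of $\operatorname{Id}(V)$ onto $\operatorname{Id}(\mathcal{A}(V))$, one has $\operatorname{hull}_{\mathcal{A}(V)}(\mathcal{A}(P)) = \{\mathcal{A}(Q) : Q \in \operatorname{hull}_V(P)\}$. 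Granting these three compatibilities, the equivalence for $V$ is literally the equivalence for $\mathcal{A}(V)$ read through $\theta$, so the whole proof reduces to citing the $C^{\ast}$-algebra fact.

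Since the paper may prefer a self-contained argument rather than invoking the $C^{\ast}$-algebra result as a black box, I would alternatively give the two implications directly, still using the ideal-theoretic description. For the forward direction, suppose $P$ is primal. Consider the family $\mathcal{F}$ of finite subsets $F \subseteq \operatorname{hull}(P)$, directed by inclusion, and for each $F = \{Q_1,\dots,Q_n\}$ I want to produce a primitive ideal $P_F$ that is "close" to every $Q_i$. The key point is that primality of $P$ forces $\operatorname{ker}(\operatorname{hull}(P)) = P$ itself to be primal, and more usefully that for any finitely many ideals $J_1,\dots,J_m$ each of which properly contains some member of $\operatorname{hull}(P)$ — equivalently each $J_k \not\supseteq$ the relevant $Q_i$ — one cannot have $J_1 \cdots J_m = \{0\}$ with $m$ odd (after padding, as in Proposition~\ref{primalmain}, the parity is not an obstruction). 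Hence the ideals $\bigcap_{i} \big(\text{an ideal not contained in } Q_i\big)$ have nonzero product, so there is a primitive ideal $P_F$ containing none of a prescribed collection of "test ideals" witnessing non-containment in each $Q_i$; refining the test ideals along a neighbourhood basis of each $Q_i$ in $\tau_w$ and re-indexing gives a net $P_\alpha \to Q_i$ simultaneously for all $i$, hence for all $Q \in \operatorname{hull}(P)$.

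For the converse, suppose such a net $(P_\alpha)$ exists. Let $J_1,\dots,J_{2n+1}$ be ideals of $V$ with $J_1 \cdots J_{2n+1} = \{0\}$, and suppose for contradiction that $J_k \not\subseteq P$ for every $k$. Fix $Q \in \operatorname{hull}(P)$; I claim one can choose $Q$ so that still $J_k \not\subseteq Q$ for all $k$ — this uses Proposition~\ref{result1} ($P = \operatorname{ker}\operatorname{hull}(P)$), which guarantees that if $J_k \subseteq Q$ for every $Q \in \operatorname{hull}(P)$ then $J_k \subseteq P$, contradicting our assumption, so for each $k$ the set $\operatorname{hull}(P) \cap V(J_k)$ is nonempty; a compactness/Baire-type argument, or more simply the fact that $V(J_k)$ is open and these finitely many open sets all meet $\operatorname{hull}(P)$ together with primality-type genericity, lets us select a single $Q$ meeting all of them, i.e. $J_k \not\subseteq Q$ for all $k$. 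Since $P_\alpha \to Q$ and each $U(J_k)$ is an open neighbourhood of $Q$, eventually $P_\alpha \in \bigcap_k U(J_k)$, so $J_k \not\subseteq P_\alpha$ for all $k$; but $P_\alpha$ is primitive, hence prime, and a prime ideal of a TRO containing $J_1\cdots J_{2n+1} = \{0\}$ must contain some $J_k$ (by the odd-product definition of prime, (\cite{AA}, Lemma~1) applies since $2n+1$ is odd) — contradiction. Therefore $J_k \subseteq P$ for some $k$, and $P$ is primal.

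The main obstacle is the forward direction: extracting a \emph{single} net of primitive ideals that converges to \emph{every} point of $\operatorname{hull}(P)$ at once, rather than separate nets for separate points. The cleanest route really is the reduction to $\mathcal{A}(V)$ in the first paragraph, where this net is already available from the established $C^{\ast}$-theory; the direct argument sketched above works but requires care in organizing the directed set of (finite subset of $\operatorname{hull}(P)$, $\tau_w$-neighbourhood data) pairs and in using Proposition~\ref{result1} to convert "$J \not\subseteq P$" into the existence of a genuine primitive ideal omitting a suitable test ideal. I would present the $\mathcal{A}(V)$ reduction as the proof and relegate the direct argument to a remark.
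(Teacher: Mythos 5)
Your first paragraph is essentially the paper's proof verbatim: the paper reduces to $\mathcal{A}(V)$ via Proposition~\ref{primalmain} and the homeomorphism $\theta|_{\operatorname{Prim}(V)}$, and invokes the classical net characterization of primal ideals in $C^{\ast}$-algebras, exactly as you propose. The supplementary direct argument you sketch is not needed (and in the converse direction you do not need a single $Q$ meeting all the $V(J_k)$ --- converging to each $Q_k$ separately and intersecting finitely many ``eventually'' conditions already suffices), but since you present the $\mathcal{A}(V)$ reduction as the proof, your approach matches the paper's.
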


\begin{proof}
Suppose $P$ is a primal ideal in $V$. By the last proposition, $\mathcal{A}(P)$ is a primal ideal in $\mathcal{A}(V)$. Hence, there exists a net $\mathcal{A}(P_{\alpha}) \subseteq \operatorname{Prim}(\mathcal{A}(V))$ such that $\mathcal{A}(P_{\alpha})$ converges to $\mathcal{A}(Q)$ for all $\mathcal{A}(Q) \in \operatorname{hull}(\mathcal{A}(P))$, so $P_{\alpha}$ converges to $Q$ for all $Q \in \operatorname{hull}(P)$. The proof of the converse is along similar lines using the last proposition, and we leave the details to the reader.

\end{proof}

For a $C^{\ast}$-algebra $A$, the space $\operatorname{Glimm}(A)$ of the Glimm ideals of $A$ is naturally obtained through the complete regularization of $\operatorname{Prim}(A)$ (\cite{ARR}). Following the similar approach, we now define the space of Glimm ideals for a TRO $V$.

\begin{defn}
  
Let $V$ be a TRO. We define $\operatorname{Glimm}(V)$ as the complete regularization $\rho \operatorname{Prim}(V)$ of $\ \operatorname{Prim}(V)$, and denote by $\rho_V: \operatorname{Prim}(V) \to \operatorname{Glimm}(V)$ the complete regularization map. There is  a one-to-one correspondence between the quotient space \(\operatorname{Prim}(V) / \approx\) and a set of ideals in \(V\), given by 
\[
[P] \mapsto \operatorname{k}([P]) = \bigcap \{ Q \in \operatorname{Prim}(V) : Q \approx P \}.
\]

The ideals obtained through this correspondence are called Glimm ideals of $V$.  We will regard elements of $\operatorname{Glimm}(V)$ as either points of a topological space or as ideals of $V$, depending on the context.
  
\end{defn}

The proof of the following lemma is straightforward.
\begin{lem}
 $P \approx Q$ if and only if $\mathcal{A}(P) \approx \mathcal{A}(Q)$ for all primitive ideals $P$ and $Q$ of a TRO $V$. Therefore, $\mathcal{A}([P])= [\mathcal{A}(P)]$. 
\end{lem}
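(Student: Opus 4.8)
The plan is to reduce everything to the corresponding, already-known statement for the linking $C^{*}$-algebra $\mathcal{A}(V)$, using the homeomorphism $\theta$ restricted to $\operatorname{Prim}(V)$ from Theorem~\ref{thm2}(ii). First I would recall that the complete regularization equivalence relation on a topological space depends only on the topology and the algebra of bounded continuous functions; since $\theta|_{\operatorname{Prim}(V)}: \operatorname{Prim}(V) \to \operatorname{Prim}(\mathcal{A}(V))$ is a homeomorphism, it induces a bijection $C^{b}(\operatorname{Prim}(\mathcal{A}(V))) \to C^{b}(\operatorname{Prim}(V))$ by composition, $f \mapsto f \circ \theta$. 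Consequently, for $P, Q \in \operatorname{Prim}(V)$ we have $f(P) = f(Q)$ for all $f \in C^{b}(\operatorname{Prim}(V))$ if and only if $g(\mathcal{A}(P)) = g(\mathcal{A}(Q))$ for all $g \in C^{b}(\operatorname{Prim}(\mathcal{A}(V)))$, which is exactly the assertion $P \approx Q \iff \mathcal{A}(P) \approx \mathcal{A}(Q)$.

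For the second assertion, $\mathcal{A}([P]) = [\mathcal{A}(P)]$, I would unwind the definitions: $[P] = \operatorname{k}([P]) = \bigcap\{Q \in \operatorname{Prim}(V) : Q \approx P\}$ as an ideal of $V$, and similarly $[\mathcal{A}(P)] = \bigcap\{R \in \operatorname{Prim}(\mathcal{A}(V)) : R \approx \mathcal{A}(P)\}$ as an ideal of $\mathcal{A}(V)$. Applying $\mathcal{A}(\cdot)$ and using that $\theta$ commutes with intersections (i.e. $\mathcal{A}(\bigcap_\alpha I_\alpha) = \bigcap_\alpha \mathcal{A}(I_\alpha)$, which is used repeatedly in the paper, e.g. in the proof of Proposition~\ref{primalmain}), together with the fact from the previous paragraph that $\{\mathcal{A}(Q) : Q \in \operatorname{Prim}(V),\ Q \approx P\} = \{R \in \operatorname{Prim}(\mathcal{A}(V)) : R \approx \mathcal{A}(P)\}$ (using surjectivity of $\theta|_{\operatorname{Prim}(V)}$), one gets
\[
\mathcal{A}([P]) = \mathcal{A}\Big(\bigcap_{Q \approx P} Q\Big) = \bigcap_{Q \approx P} \mathcal{A}(Q) = \bigcap_{R \approx \mathcal{A}(P)} R = [\mathcal{A}(P)].
\]

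I do not anticipate a serious obstacle here; this really is as routine as the paper claims. The only point that needs a little care is making sure that the complete-regularization relation $\approx$ is genuinely transported by an arbitrary homeomorphism — this is immediate once one notes that a homeomorphism $h: X \to Y$ gives an isometric algebra isomorphism $C^{b}(Y) \to C^{b}(X)$ via $f \mapsto f \circ h$, so the partitions of $X$ and $Y$ into fibers of "agreement under all bounded continuous functions" correspond. A second minor point is to confirm that $\mathcal{A}(\cdot)$ preserves arbitrary intersections of closed ideals of $V$; this follows from the structure of $\mathcal{A}(I)$ as the hereditary $C^{*}$-subalgebra with corners $C(I), I, I^{*}, D(I)$ and the fact that $\theta$ is an order isomorphism of $\operatorname{Id}(V)$ onto $\operatorname{Id}(\mathcal{A}(V))$ (so it preserves infima, which are intersections in both lattices).
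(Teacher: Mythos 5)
Your argument is correct and is exactly the route the paper has in mind (the paper omits the proof as ``straightforward''): transport the relation $\approx$ through the homeomorphism $\theta|_{\operatorname{Prim}(V)}$ of Theorem~\ref{thm2}(ii), then use that $\theta$ is an order isomorphism of ideal lattices, hence preserves intersections, to get $\mathcal{A}(k([P]))=k([\mathcal{A}(P)])$. No gaps.
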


 Using the above lemma, we obtain the following.

\begin{prop}\label{mainglimm1} For a TRO $V$, $I$ is a Glimm ideal of $V$ if and only if $\mathcal{A}(I)$ is a Glimm ideal of $\mathcal{A}(V)$.
\end{prop}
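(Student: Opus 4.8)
The plan is to reduce the statement to the $C^*$-algebraic setting via the linking algebra $\mathcal{A}(V)$, exactly as in the proofs of Propositions~\ref{facmain} and \ref{primalmain}. The two ingredients available are: first, Theorem~\ref{thm2}(ii), which says $\theta|_{\operatorname{Prim}(V)}$ is a homeomorphism onto $\operatorname{Prim}(\mathcal{A}(V))$; second, the preceding Lemma, which says $P \approx Q$ in $\operatorname{Prim}(V)$ if and only if $\mathcal{A}(P) \approx \mathcal{A}(Q)$ in $\operatorname{Prim}(\mathcal{A}(V))$, together with the consequent identity $\mathcal{A}([P]) = [\mathcal{A}(P)]$ at the level of equivalence classes. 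The goal is to translate the latter (a statement about equivalence classes of points) into the desired statement about the associated Glimm \emph{ideals}, i.e. the intersections $\operatorname{k}([P]) = \bigcap\{Q \in \operatorname{Prim}(V) : Q \approx P\}$.

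First I would take an arbitrary Glimm ideal $I$ of $V$; by definition $I = \operatorname{k}([P]) = \bigcap\{Q \in \operatorname{Prim}(V): Q \approx P\}$ for some $P \in \operatorname{Prim}(V)$. Applying the functor $\mathcal{A}(\cdot)$ and using that $\theta$ intertwines arbitrary intersections with intersections (this is part of the standard properties of $\theta$: $\mathcal{A}(\bigcap_\alpha I_\alpha) = \bigcap_\alpha \mathcal{A}(I_\alpha)$, which was used implicitly in Proposition~\ref{primalmain}), I get
\[
\mathcal{A}(I) = \mathcal{A}\Bigl(\bigcap\{Q : Q \approx P\}\Bigr) = \bigcap\{\mathcal{A}(Q) : Q \approx P\}.
\]
By the Lemma, $Q \approx P$ in $\operatorname{Prim}(V)$ is equivalent to $\mathcal{A}(Q) \approx \mathcal{A}(P)$ in $\operatorname{Prim}(\mathcal{A}(V))$, and since $\theta|_{\operatorname{Prim}(V)}$ is a bijection onto $\operatorname{Prim}(\mathcal{A}(V))$, the set $\{\mathcal{A}(Q) : Q \in \operatorname{Prim}(V),\ Q \approx P\}$ is exactly $\{R \in \operatorname{Prim}(\mathcal{A}(V)) : R \approx \mathcal{A}(P)\}$. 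Hence $\mathcal{A}(I) = \bigcap\{R \in \operatorname{Prim}(\mathcal{A}(V)) : R \approx \mathcal{A}(P)\} = \operatorname{k}([\mathcal{A}(P)])$, which is by definition a Glimm ideal of $\mathcal{A}(V)$.

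For the converse, I would start from a Glimm ideal $\mathcal{A}(I)$ of $\mathcal{A}(V)$, or rather from a Glimm ideal of $\mathcal{A}(V)$ and use that $\theta$ is onto $\operatorname{Id}(\mathcal{A}(V))$ to write it as $\mathcal{A}(I)$ for a unique $I \in \operatorname{Id}(V)$; since every primitive ideal of $\mathcal{A}(V)$ has the form $\mathcal{A}(P)$ with $P \in \operatorname{Prim}(V)$, the Glimm ideal is $\operatorname{k}([\mathcal{A}(P)]) = \bigcap\{R \approx \mathcal{A}(P)\}$, and running the same chain of equalities backwards (using surjectivity of $\theta|_{\operatorname{Prim}(V)}$ and the Lemma in the reverse direction) identifies $\mathcal{A}(I)$ with $\mathcal{A}(\operatorname{k}([P]))$, whence $I = \operatorname{k}([P])$ by injectivity of $\theta$. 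I do not expect a serious obstacle here: the only point requiring a little care is confirming that $\theta$ preserves arbitrary (not just finite) intersections of ideals, so that $\mathcal{A}$ may be pulled through the defining intersection of a Glimm ideal; this follows from the lattice properties of $\theta$ already invoked earlier in the paper (e.g. in the proof of Proposition~\ref{primalmain}, where $\mathcal{A}(I_1 \cap \dots \cap I_n) = \mathcal{A}(I_1) \cap \dots \cap \mathcal{A}(I_n)$ is used), extended to arbitrary families. Everything else is a direct transcription through the bijection $\theta|_{\operatorname{Prim}(V)}$ and the preceding Lemma.
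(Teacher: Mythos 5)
Your proposal is correct and follows essentially the same route as the paper: write $I = \operatorname{k}([P])$, push $\mathcal{A}$ through the intersection, invoke the preceding lemma ($\mathcal{A}([P]) = [\mathcal{A}(P)]$) together with the bijectivity of $\theta|_{\operatorname{Prim}(V)}$ to get $\mathcal{A}(I) = \operatorname{k}([\mathcal{A}(P)])$, and use injectivity of $\theta$ for the converse. The only difference is that you make explicit the fact that $\theta$ preserves arbitrary intersections, which the paper leaves implicit.
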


\begin{proof}
Let \( I \) be a Glimm ideal of \( V \). By definition, there exists a primitive ideal \( P \) of \( V \) such that  $I = k([P])$.
Applying the functor \( \mathcal{A} \), $\mathcal{A}(I) = \mathcal{A}(k([P]))$. By  previous lemma, we know that  
$\mathcal{A}(I) = k([\mathcal{A}(P)])$. This shows that \( \mathcal{A}(I) \) is a Glimm ideal of \( \mathcal{A}(V) \).  

Conversely, suppose \( \mathcal{A}(I) \) is a Glimm ideal of \( \mathcal{A}(V) \). Then, by definition, there exists a primitive ideal \( P \) of \( V \) such that  $\mathcal{A}(I) = k([\mathcal{A}(P)])=\mathcal{A}(k([P]))$. Since \( \theta \) is injective, it follows that $I = k[P]$. Hence, \( I \) is a Glimm ideal of \( V \), completing the proof. 

\end{proof}

\begin{defn}
    A net $(e_{\lambda}, f_{\lambda})$ in a TRO $V$ is called an \textit{approximate identity} for $V$ if there exists a constant $K > 0$ such that  $\| e_{\lambda} \| \leq K, \quad \| f_{\lambda} \| \leq K, \quad \text{for all } \lambda$. Moreover, for every $x \in V$, the nets  $ e_{\lambda} f_{\lambda}^{\ast} x $ and $ x e_{\lambda}^{\ast} f_{\lambda}$ converge to $x$.  

A TRO $V$ is called \textit{$\sigma$-unital} if it admits a countable approximate identity.  
\end{defn}

\begin{cor} \label{sector3main}
Let $V$ be a TRO satisfying one of the following conditions:  
\begin{enumerate}  
    \item[(i)] $\operatorname{Prim}(V)$ is compact,  
    \item[(ii)] The complete regularization map $\rho_V : \operatorname{Prim}(V) \to \operatorname{Glimm}(V)$ is open,   
    \item[(iii)] $V$ is $\sigma$-unital and $\operatorname{Glimm}(V)$ is locally compact.  
\end{enumerate}  
Then the complete regularization $\rho(\operatorname{Prim}(V) \times \operatorname{Prim}(B))$ of $\operatorname{Prim}(V) \times \operatorname{Prim}(B)$ is homeomorphic to the product space $\operatorname{Glimm}(V) \times \operatorname{Glimm}(B)$ for any $C^{\ast}$-algebra $B$.   
\end{cor}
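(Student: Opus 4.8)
The plan is to reduce the statement to a known result about $C^{\ast}$-algebras by transporting everything through the linking algebra $\mathcal{A}(V)$. The relevant $C^{\ast}$-theoretic fact is the one due to Archbold--Kaniuth (the analogue for $C^{\ast}$-algebras $A,B$): if $A$ satisfies any of the three conditions (i) $\operatorname{Prim}(A)$ compact, (ii) $\rho_A$ open, (iii) $A$ $\sigma$-unital with $\operatorname{Glimm}(A)$ locally compact, then $\rho(\operatorname{Prim}(A)\times\operatorname{Prim}(B))$ is homeomorphic to $\operatorname{Glimm}(A)\times\operatorname{Glimm}(B)$. So the strategy is: show that each of conditions (i)--(iii) on $V$ is equivalent to the corresponding condition on $\mathcal{A}(V)$, then invoke that result with $A = \mathcal{A}(V)$, and finally translate the conclusion back from $\operatorname{Prim}(\mathcal{A}(V))$ to $\operatorname{Prim}(V)$.

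First I would record the homeomorphism $\theta\colon\operatorname{Prim}(V)\to\operatorname{Prim}(\mathcal{A}(V))$ from Theorem~\ref{thm2}(ii). Because $\theta$ is a homeomorphism, condition (i) for $V$ holds iff $\operatorname{Prim}(\mathcal{A}(V))$ is compact. For condition (ii), note that $\theta$ intertwines the complete regularization maps: by the lemma preceding Proposition~\ref{mainglimm1}, $\mathcal{A}([P]) = [\mathcal{A}(P)]$, so there is an induced homeomorphism $\operatorname{Glimm}(V)\to\operatorname{Glimm}(\mathcal{A}(V))$ (this uses that complete regularization is functorial for homeomorphisms — the algebra $C^b$ is carried isomorphically — together with Proposition~\ref{mainglimm1}), and the square relating $\rho_V,\rho_{\mathcal{A}(V)},\theta$ and this induced map commutes. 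Hence $\rho_V$ is open iff $\rho_{\mathcal{A}(V)}$ is open. For condition (iii): $V$ is $\sigma$-unital iff $\mathcal{A}(V)$ is $\sigma$-unital — a countable approximate identity $(e_\lambda,f_\lambda)$ for $V$ produces one for $\mathcal{A}(V)$ via the matrix $\begin{bmatrix} e_\lambda e_\lambda^\ast & 0\\ 0 & f_\lambda^\ast f_\lambda\end{bmatrix}$ type construction, and conversely the corners of a countable approximate identity for $\mathcal{A}(V)$ restrict appropriately; and local compactness of $\operatorname{Glimm}(V)$ transfers via the homeomorphism $\operatorname{Glimm}(V)\cong\operatorname{Glimm}(\mathcal{A}(V))$.

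Once the hypotheses are matched up, I would apply the $C^{\ast}$-algebra result to $A = \mathcal{A}(V)$ and the given $C^{\ast}$-algebra $B$, obtaining a homeomorphism
\[
\rho\bigl(\operatorname{Prim}(\mathcal{A}(V))\times\operatorname{Prim}(B)\bigr)\;\cong\;\operatorname{Glimm}(\mathcal{A}(V))\times\operatorname{Glimm}(B).
\]
Then I would use the homeomorphism $\theta\times\operatorname{id}_{\operatorname{Prim}(B)}\colon\operatorname{Prim}(V)\times\operatorname{Prim}(B)\to\operatorname{Prim}(\mathcal{A}(V))\times\operatorname{Prim}(B)$; since complete regularization is functorial with respect to homeomorphisms, it induces a homeomorphism $\rho(\operatorname{Prim}(V)\times\operatorname{Prim}(B))\cong\rho(\operatorname{Prim}(\mathcal{A}(V))\times\operatorname{Prim}(B))$, and composing with the displayed homeomorphism and the homeomorphism $\operatorname{Glimm}(\mathcal{A}(V))\cong\operatorname{Glimm}(V)$ gives the claim.

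The main obstacle I anticipate is not any single hard step but making precise the functoriality of complete regularization under homeomorphisms and, in particular, verifying that the various squares commute — i.e., that the homeomorphism $\operatorname{Glimm}(V)\to\operatorname{Glimm}(\mathcal{A}(V))$ is genuinely the map induced by $\theta$ and is compatible with the product-space identifications of the preceding proposition (Glimm lemma). This is essentially bookkeeping with the universal property of $\rho$ and the fact that a homeomorphism $X\to Y$ induces an isomorphism $C^b(Y)\to C^b(X)$ hence a homeomorphism $\rho X\to\rho Y$, but it must be spelled out carefully so that the equivalence of conditions (ii) and the final transport of the homeomorphism are rigorous. The $\sigma$-unitality equivalence in (iii) is routine but should be stated; everything else reduces cleanly to the cited $C^{\ast}$-algebra theorem.
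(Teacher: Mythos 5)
Your proposal is correct and follows essentially the same route as the paper: reduce to the known $C^{\ast}$-algebra result (the paper cites \cite{Glimm}, Proposition 1.9) by transferring conditions (i)--(iii) and the conclusion through the homeomorphisms $\operatorname{Prim}(V)\cong\operatorname{Prim}(\mathcal{A}(V))$, $\operatorname{Glimm}(V)\cong\operatorname{Glimm}(\mathcal{A}(V))$ and the $\sigma$-unitality equivalence. The paper states this transfer without spelling out the functoriality bookkeeping you flag, so your version is simply a more detailed account of the same argument.
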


\begin{proof}
This follows from the corresponding result for $C^*$-algebras ( \cite{Glimm}, Proposition 1.9) and the fact that $V$ and its linking  $C^*$-algebra $\mathcal{A}(V)$ satisfy:
\begin{enumerate}
    \item[(i)] $ \operatorname{Prim}(V) \cong \operatorname{Prim}(\mathcal{A}(V)) $,
    \item[(ii)] $ \operatorname{Glimm}(V) \cong \operatorname{Glimm}(\mathcal{A}(V)) $,
    \item[(iii)] $V$ is $\sigma$-unital if and only if $\mathcal{A}(V)$ is $\sigma$-unital.
\end{enumerate}
Applying the known result to $ \mathcal{A}(V) $ and $ B $ completes the proof.
\end{proof}

Let \(V\) be a TRO. In addition to the equivalence relation \(\approx\) introduced in Definition, we now define a second equivalence relation on the primitive ideal space \(\mathrm{Prim}(V)\), based on the topological separation properties of primitive ideals.

\begin{defn}
For \(P, Q \in \mathrm{Prim}(V)\), we write $
P \sim Q$
if and only if there do not exist disjoint open subsets \(U_1, U_2 \subseteq \mathrm{Prim}(V)\) such that \(P \in U_1\) and \(Q \in U_2\). That is, \(P\) and \(Q\) cannot be separated by disjoint open sets in the hull-kernel topology.
\end{defn}

Note that the relation \(\sim\) is an equivalence relation on \(\mathrm{Prim}(V)\). If \(P \sim Q\)  then  \(P \approx Q\). However, the converse does not always hold, see \cite{qs1991}.

\begin{defn}

A TRO \( V \) is said to be quasi-standard if the equivalence relation
$\sim$
on its primitive ideal space \( \mathrm{Prim}(V) \) is open.  
\end{defn}


\begin{prop}
    Let \( V \) be a TRO. Then \( V \) is quasi-standard if and only if its linking algebra \( \mathcal{A}(V) \) is quasi-standard.
\end{prop}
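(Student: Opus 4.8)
The plan is to reduce the statement to the corresponding fact for $C^*$-algebras by transporting the equivalence relation $\sim$ through the homeomorphism $\theta$ between $\operatorname{Prim}(V)$ and $\operatorname{Prim}(\mathcal{A}(V))$ (Theorem~\ref{thm2}(ii)). The key observation is that $\sim$ is defined purely in terms of the hull-kernel topology on the primitive ideal space: $P \sim Q$ precisely when $P$ and $Q$ cannot be separated by disjoint open sets. Since $\theta|_{\operatorname{Prim}(V)}$ is a homeomorphism onto $\operatorname{Prim}(\mathcal{A}(V))$, it carries disjoint open sets to disjoint open sets in both directions, so for all $P, Q \in \operatorname{Prim}(V)$ we have $P \sim Q$ if and only if $\mathcal{A}(P) \sim \mathcal{A}(Q)$. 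Hence $\theta$ induces a bijection between the quotient $\operatorname{Prim}(V)/\!\sim$ and $\operatorname{Prim}(\mathcal{A}(V))/\!\sim$, and this bijection is compatible with the quotient maps.

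The second step is to check that openness of $\sim$ is preserved under this identification. Recall that $\sim$ being open means: for every open set $U \subseteq \operatorname{Prim}(V)$, its $\sim$-saturation $\{P : P \sim Q \text{ for some } Q \in U\}$ is again open. Because $\theta|_{\operatorname{Prim}(V)}$ is a homeomorphism intertwining $\sim$ on the two spaces, it maps open sets to open sets and saturations to saturations; therefore the saturation of every open subset of $\operatorname{Prim}(V)$ is open if and only if the saturation of every open subset of $\operatorname{Prim}(\mathcal{A}(V))$ is open. Thus $\sim$ is open on $\operatorname{Prim}(V)$ exactly when it is open on $\operatorname{Prim}(\mathcal{A}(V))$, which by definition says $V$ is quasi-standard if and only if $\mathcal{A}(V)$ is quasi-standard.

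I do not anticipate a genuine obstacle here; the content is entirely formal once Theorem~\ref{thm2}(ii) is in hand, since every ingredient in the definition of quasi-standardness ($\operatorname{Prim}$, its topology, disjoint open sets, openness of an equivalence relation) is a topological invariant of the primitive ideal space, and $\theta$ provides the required homeomorphism. The only point deserving a line of care is the bookkeeping that $\theta(P) = \mathcal{A}(P)$ genuinely restricts to a homeomorphism on $\operatorname{Prim}$ (not merely a bijection), which is exactly the cited result, and that one should phrase the argument in terms of this restriction rather than the full map $\theta$ on $\operatorname{Id}(V)$. I would write the proof in two or three sentences: invoke Theorem~\ref{thm2}(ii), note that $P \sim Q \iff \mathcal{A}(P) \sim \mathcal{A}(Q)$ because homeomorphisms preserve and reflect separation by disjoint open sets, and conclude that openness of $\sim$ transfers across, completing the proof.
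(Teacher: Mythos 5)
Your proposal is correct and follows exactly the paper's argument: both invoke the homeomorphism $\theta|_{\operatorname{Prim}(V)}:\operatorname{Prim}(V)\to\operatorname{Prim}(\mathcal{A}(V))$ from Theorem~\ref{thm2}(ii), observe that $P\sim Q$ iff $\mathcal{A}(P)\sim\mathcal{A}(Q)$ since the relation is purely topological, and conclude that openness of $\sim$ transfers across the homeomorphism. Your write-up merely spells out the saturation argument that the paper leaves as ``it follows easily.''
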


\begin{proof}
Note that $\operatorname{Prim}(V) \cong \operatorname{Prim}(\mathcal{A}(V))$ by Theorem \ref{thm2}. It follows easily that $P \sim Q$ if and only if $\mathcal{A}(P)\sim \mathcal{A}(Q) $ for all $P,Q\in \operatorname{Prim}(V) $.  Furthermore, the relation \(\sim\) is open on \(\operatorname{Prim}(V)\) if and only if it is open on \(\operatorname{Prim}(\mathcal{A}(V))\). Hence the result follows.
\end{proof}

The following proposition follows from the fact that $\operatorname{Prim}(V) \cong \operatorname{Prim}(\mathcal{A}(V))$, $\operatorname{Primal}(V) \cong \operatorname{Primal}(\mathcal{A}(V))$, $\operatorname{Glimm}(V) \cong \operatorname{Glimm}(\mathcal{A}(V))$, and together with the  existing results concerning $C^*$-algebras applied to $\mathcal{A}(V)$ \cite{qs1991}.

\begin{prop}

Let \( V\) be a TRO. Then the  following  statements are equivalent:

\begin{enumerate}
    \item[(i)] \( V \) is quasi-standard;
    \item[(ii)] The complete regularization map \( \rho_V\) is open and every Glimm ideal of \( V \) is primal.
    \item[(iii)] Every Glimm ideal of \( V\) is primal and the topologies \( \tau_q \) and \( \tau_{cr} \) on \( \mathrm{Glimm}(V)\) coincide with the topology \( \tau \) on \( \mathrm{Min\text{-}Primal}(V) \).
    \item[(iv)] Every Glimm ideal of \( V \) is primal and the topology \( \tau_q \) on \( \mathrm{Glimm}(V) \) coincides with the topology \( \tau \) on \( \mathrm{Min\text{-}Primal}(V) \).
\end{enumerate}

\end{prop}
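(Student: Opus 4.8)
The plan is to reduce everything to the corresponding theorem for $C^*$-algebras applied to the linking algebra $\mathcal{A}(V)$, using the translation dictionary already assembled in Section~3. Specifically, I would invoke the known equivalence for $C^*$-algebras (\cite{qs1991}): a $C^*$-algebra $A$ is quasi-standard iff $\rho_A$ is open and every Glimm ideal of $A$ is primal, iff (respectively) the various topologies $\tau_q$, $\tau_{cr}$ on $\operatorname{Glimm}(A)$ agree with the topology $\tau$ on $\operatorname{Min\text{-}Primal}(A)$ and every Glimm ideal is primal. The strategy is then to transport each of the four clauses across the homeomorphism $\theta$.

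The key steps, in order, are as follows. First, recall from Theorem~\ref{thm2}(ii) that $\theta$ restricts to a homeomorphism $\operatorname{Prim}(V)\cong\operatorname{Prim}(\mathcal{A}(V))$; since the relations $\approx$ and $\sim$ are defined purely topologically on the primitive ideal space, $\theta$ intertwines them, so by the previous proposition $V$ is quasi-standard iff $\mathcal{A}(V)$ is --- giving (i)$\Leftrightarrow$ the $C^*$-statement (i) for $\mathcal{A}(V)$. Second, $\theta$ induces a homeomorphism $\operatorname{Glimm}(V)\cong\operatorname{Glimm}(\mathcal{A}(V))$ (this is the content of Proposition~\ref{mainglimm1} together with the lemma preceding it, plus the fact that complete regularization is functorial for homeomorphisms), and it carries the quotient topology $\tau_q$ to $\tau_q$ and the complete-regularization topology $\tau_{cr}$ to $\tau_{cr}$, because $\rho_V = \rho_{\mathcal{A}(V)}\circ\theta$ on $\operatorname{Prim}$. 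Third, by Proposition~\ref{primalmain}, $\theta$ maps $\operatorname{Primal}(V)$ bijectively onto $\operatorname{Primal}(\mathcal{A}(V))$, hence also $\operatorname{Min\text{-}Primal}(V)$ onto $\operatorname{Min\text{-}Primal}(\mathcal{A}(V))$; one checks that the natural topology $\tau$ on minimal primal ideals (the one induced, via the identification with Glimm ideals when applicable, or via hull-kernel on $\operatorname{Id}$) is preserved, because $\theta:\operatorname{Id}(V)\to\operatorname{Id}(\mathcal{A}(V))$ is itself a homeomorphism for $\tau_w$. Fourth, with all four of $\operatorname{Prim}$, $\operatorname{Glimm}$, $\operatorname{Primal}$, $\operatorname{Min\text{-}Primal}$, and the relevant topologies matched up, each of (ii), (iii), (iv) for $V$ is literally equivalent to the same statement for $\mathcal{A}(V)$; then apply the $C^*$-algebra theorem to $\mathcal{A}(V)$ to close the cycle of equivalences.

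The main obstacle I anticipate is not any single deep step but the bookkeeping around the topology $\tau$ on $\operatorname{Min\text{-}Primal}(V)$: the paper has not (in this excerpt) explicitly defined $\operatorname{Min\text{-}Primal}(V)$ or its topology $\tau$, so I would first have to pin down that definition (presumably the subspace topology from $\operatorname{Id}(V)$, or equivalently a hull-kernel-type topology) and verify that $\theta$ is a homeomorphism for it --- this follows from $\theta$ being a homeomorphism $\operatorname{Id}(V)\to\operatorname{Id}(\mathcal{A}(V))$, but the identification of $\operatorname{Min\text{-}Primal}$ with the image of $\operatorname{Glimm}$ under the quasi-standard hypothesis requires a little care to avoid circularity (one should phrase the argument so that the topological comparisons in (iii) and (iv) are made before assuming quasi-standardness). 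Once that is handled, the argument is a routine, if slightly tedious, transfer along $\theta$, exactly parallel to the proofs of Propositions~\ref{facmain}, \ref{primalmain}, and \ref{mainglimm1}.

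\begin{proof}
Recall the chain of identifications established above: by Theorem~\ref{thm2}(ii) the map $\theta$ restricts to a homeomorphism $\operatorname{Prim}(V)\xrightarrow{\ \sim\ }\operatorname{Prim}(\mathcal{A}(V))$, and since the equivalence relations $\approx$ and $\sim$ on a primitive ideal space are defined solely in terms of its topology, $\theta$ intertwines them. Consequently $\rho_V=\rho_{\mathcal{A}(V)}\circ\theta|_{\operatorname{Prim}(V)}$ descends to a homeomorphism $\operatorname{Glimm}(V)\xrightarrow{\ \sim\ }\operatorname{Glimm}(\mathcal{A}(V))$ which carries $\tau_{cr}$ to $\tau_{cr}$ and $\tau_q$ to $\tau_q$. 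By Proposition~\ref{primalmain}, $\theta$ maps $\operatorname{Primal}(V)$ bijectively onto $\operatorname{Primal}(\mathcal{A}(V))$, hence also $\operatorname{Min\text{-}Primal}(V)$ onto $\operatorname{Min\text{-}Primal}(\mathcal{A}(V))$; and since $\theta:\operatorname{Id}(V)\to\operatorname{Id}(\mathcal{A}(V))$ is a homeomorphism for $\tau_w$, it carries the topology $\tau$ on $\operatorname{Min\text{-}Primal}(V)$ to the topology $\tau$ on $\operatorname{Min\text{-}Primal}(\mathcal{A}(V))$. Finally, by Proposition~\ref{mainglimm1}, $\theta$ matches the Glimm ideals of $V$ with those of $\mathcal{A}(V)$, and by Proposition~\ref{primalmain} a Glimm ideal $I$ of $V$ is primal if and only if $\mathcal{A}(I)$ is a primal ideal of $\mathcal{A}(V)$.

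Under these identifications, each of the statements (i)--(iv) for $V$ is equivalent to the corresponding statement for the $C^{\ast}$-algebra $\mathcal{A}(V)$:
\begin{itemize}
\item[(i)] $V$ is quasi-standard $\iff$ $\mathcal{A}(V)$ is quasi-standard, by the previous proposition;
\item[(ii)] $\rho_V$ is open and every Glimm ideal of $V$ is primal $\iff$ $\rho_{\mathcal{A}(V)}$ is open and every Glimm ideal of $\mathcal{A}(V)$ is primal, since $\theta|_{\operatorname{Prim}(V)}$ is a homeomorphism intertwining the regularization maps and matching primal Glimm ideals;
\item[(iii)], (iv) the coincidence of $\tau_q$ (resp.\ $\tau_q$ and $\tau_{cr}$) on $\operatorname{Glimm}(V)$ with $\tau$ on $\operatorname{Min\text{-}Primal}(V)$, together with primality of every Glimm ideal, holds $\iff$ the analogous coincidence and primality hold for $\mathcal{A}(V)$, because the homeomorphism induced by $\theta$ identifies $\operatorname{Glimm}(V)$ with $\operatorname{Glimm}(\mathcal{A}(V))$ and $\operatorname{Min\text{-}Primal}(V)$ with $\operatorname{Min\text{-}Primal}(\mathcal{A}(V))$, carrying each of the topologies $\tau_q$, $\tau_{cr}$, $\tau$ to its namesake.
\end{itemize}
By the known equivalence of (i)--(iv) for $C^{\ast}$-algebras (\cite{qs1991}) applied to $\mathcal{A}(V)$, the four statements for $\mathcal{A}(V)$ are mutually equivalent; transporting back along $\theta$ yields the mutual equivalence of (i)--(iv) for $V$.
\end{proof}
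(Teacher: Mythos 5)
Your proposal is correct and follows essentially the same route as the paper: the paper simply asserts that the proposition follows from the identifications $\operatorname{Prim}(V)\cong\operatorname{Prim}(\mathcal{A}(V))$, $\operatorname{Primal}(V)\cong\operatorname{Primal}(\mathcal{A}(V))$, $\operatorname{Glimm}(V)\cong\operatorname{Glimm}(\mathcal{A}(V))$ together with the Archbold--Somerset theorem for $C^{\ast}$-algebras applied to $\mathcal{A}(V)$. Your write-up is in fact more careful than the paper's (which gives no details), particularly in spelling out that $\theta$ carries the topologies $\tau_q$, $\tau_{cr}$, and $\tau$ to their namesakes.
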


\begin{ex}
 
For an infinite dimensional separable Hilbert spaces $\mathcal{H}$ and $\mathcal{K}$,  it follows from the equivalent condition above that $B(\mathcal{H},\mathcal{K})$ is quasi-standard.
   
\end{ex}

\section{The space of ideals of $V \otimes^h B$}

For a TRO $V$ and a $C^*$-algebra $B$,  let $ \operatorname{Id}(V \otimes^h B)$ denote the collection of $\varepsilon$-ideals of  $V \otimes^h B$. We define the weak topology $\tau_w$ on $ \operatorname{Id}(V \otimes^h B)$ by specifying as  sub-basic sets all sets of the form
\[
U(K)=\{ I \in  \operatorname{Id}(V \otimes^h B) : K \nsubseteq I \}
\]
where $K \in \operatorname{Id}(V \otimes^h B)$.  Note that for $I \in \operatorname{Id}(V)$ and $J \in \operatorname{Id}(B)$, $V \otimes^h J + I \otimes^h B \in \operatorname{Id}(V \otimes^h B)$ (\cite{AAV}, Theorem $4(b))$. Thus,  the map $$\Phi: \operatorname{Id}(V) \times \operatorname{Id}(B) \to \operatorname{Id}(V \otimes^h B)
$$  given by
\[
\Phi(I, J) = V \otimes^h J + I \otimes^h B
\] 

is  well defined. In this section, we prove that $\Phi$ has many nice  topological properties. To that end, consider a map 
\[
\Phi_1: \operatorname{Id}(V) \times \operatorname{Id}(B) \to \operatorname{Id}(V\otimes^{\text{tmin}} B),
\]
defined as
\[
\Phi_1(I, J) = \ker(q_I \otimes^{\text{tmin}} q_J),
\]
where  $q_I \otimes^{\text{tmin}} q_J$ is the canonical homomorphism from $V \otimes^{\text{tmin}} B$ onto $(V/I) \otimes^{\text{tmin}} (B/J)$. Furthermore, there is a map
\[
\Phi_2: \operatorname{Id}(V \otimes^{\text{tmin}} B) \to \operatorname{Id}(V\otimes^h B)
\]
given  by \[\Phi_2(K)= \epsilon^{-1}(K).\]
Since $\epsilon$ is an inclusion, $\Phi_2(K)$ is nothing but $K \cap (V \otimes^h B)$. The map $\Phi_2$ is $\tau_w$-continuous because for $K \in \operatorname{Id}(V\otimes^h B) $ and $I \in \operatorname{Id}(V \otimes^{\text{tmin}} B)$, $\Phi_2(I) \supseteq K$ if and only if $I$ contains the closed ideal of $\operatorname{Id}(V \otimes^{\text{tmin}} B)$ generated by $\epsilon(K)$. Moreover, $\Phi_2$ maps 
$\text{Prim}(V \otimes^{\text{tmin}} B)$ into $\text{Prim}(V \otimes^\text{h} B)$ 
because every irreducible representation  $\pi$ of $V \otimes^{\text{tmin}} B$ corresponds to irreducible representation  $\pi \circ q$ of 
$V \otimes^{\text{tmax}} B$, as $q$ is surjective. This further corresponds to irreducible 
$*$-representation $\pi \circ q \circ \epsilon$ of $V \otimes^{\text{h}} B$.

 \begin{lem}\label{result3} Let $V$ be a TRO and $B$ be a $C^{\ast}$-algebra.  Let $I$ and $J$ be ideals of $V$ and $B$ respectively. Then $$\ker(q_I \otimes^ h q_J) = \ker(q_I \otimes^{\text{tmin}} q_J) \cap (V \otimes^h B).$$
    The commutativity of the following diagram follows from this equality:
\[
\xymatrix{
 \operatorname{Id}(V) \times \operatorname{Id}(B) \ar[r]^{\Phi_1} \ar[dr]_{\Phi} & \operatorname{Id}(V \otimes^{\text{tmin}} B)\ar[d]^{\Phi_2} \\
 & \operatorname{Id}(V\otimes^h B)
}
\]

\end{lem}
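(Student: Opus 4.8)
The plan is to prove the key equality $\ker(q_I \otimes^h q_J) = \ker(q_I \otimes^{\text{tmin}} q_J) \cap (V \otimes^h B)$; the commutativity of the diagram is then a purely formal consequence, since by definition $\Phi_2 \circ \Phi_1 (I,J) = \epsilon^{-1}\big(\ker(q_I \otimes^{\text{tmin}} q_J)\big) = \ker(q_I \otimes^{\text{tmin}} q_J) \cap (V \otimes^h B)$, which by the equality equals $\ker(q_I \otimes^h q_J) = \Phi(I,J)$ (recall $\Phi(I,J) = V \otimes^h J + I \otimes^h B$ and, by the cited Corollary $2.6$ of \cite{AS}, this is exactly $\ker(q_I \otimes^h q_J)$). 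So the whole content is the displayed identity.

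For the inclusion $\subseteq$, I would argue as follows. The map $\epsilon \colon V \otimes^h B \to V \otimes^{\text{tmin}} B$ is injective, and there is a commuting square relating $\epsilon$ on $V\otimes^h B$, $\epsilon$ on $(V/I)\otimes^h (B/J)$, and the two quotient maps $q_I \otimes^h q_J$ and $q_I \otimes^{\text{tmin}} q_J$; concretely, $(q_I \otimes^{\text{tmin}} q_J)\circ \epsilon = \epsilon' \circ (q_I \otimes^h q_J)$ where $\epsilon'$ is the injection $(V/I)\otimes^h(B/J) \to (V/I)\otimes^{\text{tmin}}(B/J)$. This commutation holds on the algebraic tensor product $V\otimes B$ by inspection and extends by density and continuity. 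Hence if $x \in \ker(q_I \otimes^h q_J) \subseteq V\otimes^h B$, then $(q_I \otimes^{\text{tmin}} q_J)(\epsilon(x)) = \epsilon'\big((q_I \otimes^h q_J)(x)\big) = 0$, so $\epsilon(x) \in \ker(q_I \otimes^{\text{tmin}} q_J)$, i.e. $x$ lies in the right-hand side (identifying $V\otimes^h B$ with its image under $\epsilon$).

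For the reverse inclusion $\supseteq$, suppose $x \in V\otimes^h B$ with $\epsilon(x) \in \ker(q_I \otimes^{\text{tmin}} q_J)$. Using the same commuting square, $\epsilon'\big((q_I \otimes^h q_J)(x)\big) = (q_I \otimes^{\text{tmin}} q_J)(\epsilon(x)) = 0$; since $\epsilon'$ is injective (it is the canonical injection for the pair $V/I$, $B/J$, which is again a TRO and a $C^*$-algebra, so \cite{AKKKK}, Proposition $4.9$ applies), we conclude $(q_I \otimes^h q_J)(x) = 0$, i.e. $x \in \ker(q_I \otimes^h q_J)$. Combining the two inclusions gives the equality.

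The main obstacle — really the only non-bookkeeping point — is establishing the commuting square $(q_I \otimes^{\text{tmin}} q_J)\circ \epsilon = \epsilon' \circ (q_I \otimes^h q_J)$ rigorously, i.e. checking that the ``identity-type'' maps genuinely intertwine. On the algebraic tensor product all four maps send $v \otimes b \mapsto q_I(v)\otimes q_J(b)$ up to the relevant identifications, so the square commutes there; both composites are bounded (each factor is completely contractive, by Proposition $9.2.5$ of \cite{ER} for the Haagerup legs and by standard injectivity/functoriality for the minimal ones), and $V\otimes B$ is dense in $V\otimes^h B$, so the square commutes everywhere. One should also record that $V/I$ is a TRO and $B/J$ a $C^*$-algebra so that Proposition $4.9$ of \cite{AKKKK} is applicable to $\epsilon'$; this is standard. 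With those points in place the argument is complete.
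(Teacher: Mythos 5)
Your proposal is correct and follows essentially the same route as the paper: the paper also establishes that $q_I \otimes^h q_J$ and $q_I \otimes^{\text{tmin}} q_J$ agree on $V \otimes^h B$ by checking on the algebraic tensor product and extending by density and boundedness, which is exactly your commuting square $(q_I \otimes^{\text{tmin}} q_J)\circ \epsilon = \epsilon' \circ (q_I \otimes^h q_J)$. If anything, you are slightly more careful than the paper, which leaves implicit the injectivity of $\epsilon' \colon (V/I)\otimes^h (B/J) \to (V/I)\otimes^{\text{tmin}} (B/J)$ needed for the reverse inclusion, whereas you cite it explicitly.
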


\begin{proof}

Let $x \in V \otimes^h B$, and let $(x_\lambda)$ be a net in $V \otimes B$ such that $\lim_\lambda \| x_\lambda - x \|_h = 0$. Since $\|.\|_{\text{tmin}} \leq \|.\|_h$, 
$\lim_\lambda \| x_\lambda - x \|_{\text{tmin}} = 0$. As the maps $q_i \otimes^h q_J$ and $q_I \otimes^{\text{tmin}} q_J$ are bounded, it follows that $(q_I \otimes^h q_J)(x_\lambda)$ and $(q_I \otimes^{\text{tmin}} q_J)(x_\lambda)$ converges to $q_I \otimes^h q_J(x)$ and $(q_I \otimes^{\text{tmin}} q_J)(x)$ respectively. However, since $q_I \otimes^h q_J$ and $q_I \otimes^{\text{min}} q_J$ both agree on $V \otimes B$, we have $q_I \otimes^h q_J(x)= q_I \otimes^{\text{tmin}} q_J(x)$. Hence, we obtain $(q_I \otimes^{\text{tmin}} q_J) \vert_{V \otimes^h B}= q_I \otimes^h q_J$. From this, the result follows.

\end{proof}

The map $\theta \times 1:\operatorname{Prim}(V) \times \operatorname{Prim}(B) \to \operatorname{Prim}(\mathcal{A}(V)) \times \operatorname{Prim}(B) $ is a homeomorphism. Moreover, the map $\Psi: \operatorname{Prim}(V \otimes^{\text{tmin}} B) \to \operatorname{Prim}(\mathcal{A}(V) \otimes^{\text{min}} B)$ is also a homeomorphism (\cite{MK_JR}, Proposition $3.1$). Furthermore, the map $\Phi': \operatorname{Prim}(\mathcal{A}(V)) \times \operatorname{Prim}(B) \to \operatorname{Prim}(\mathcal{A}(V) \otimes^{\text{min}} B)  $ is a homeomorphism onto its image (\cite{Gui}, Theorem $5$). Thus, we obtain the following commutative diagram

\[
\begin{tikzcd}
  \operatorname{Prim}(V) \times \operatorname{Prim}(B)
    \arrow[r, "\Phi_1"]
    \arrow[d, swap, "\theta \times 1"]
  &
  \operatorname{Prim}(V \otimes^{\text{tmin}} B)
    \arrow[d, swap, "\Psi"]
  \\
  \operatorname{Prim}(\mathcal{A}(V)) \times \operatorname{Prim}(B)
    \arrow[r, swap, "\Phi'"]
  &
  \operatorname{Prim}(\mathcal{A}(V)\otimes^{\mathrm{min}} B)
\end{tikzcd}
\]
 Using the commutativity of the diagram, we conclude that restriction of $\Phi_1$ to \newline $\operatorname{Prim}(V) \times \operatorname{Prim}(B)$  is a homeomorphism onto its image. Additionally,   the map \(\Phi\) sends \(\operatorname{Prim}(V) \times \operatorname{Prim}(B)\) to \(\operatorname{Prim}(V \otimes^h B)\) (\cite{AA}, Theorem $4$). Finally, since \(\Phi\) factors as \(\Phi = \Phi_2 \circ \Phi_1\), we obtain the following result.

\begin{prop} \label{fourmain}
    The restriction of $\Phi$ to $\text{Prim}(V) \times \text{Prim}(B)$ is continuous for the 
product \textit{hk}-topology on $\text{Prim}(V) \times \text{Prim}(B)$ and the $\tau_w$-topology on $\text{Prim}(V \otimes^h B)$.
\end{prop}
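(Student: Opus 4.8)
The plan is to assemble the result from the three homeomorphisms already recorded, using the commutative square that precedes the statement. Concretely, I would first invoke the commutativity of
\[
\begin{tikzcd}
  \operatorname{Prim}(V) \times \operatorname{Prim}(B)
    \arrow[r, "\Phi_1"]
    \arrow[d, swap, "\theta \times 1"]
  &
  \operatorname{Prim}(V \otimes^{\text{tmin}} B)
    \arrow[d, swap, "\Psi"]
  \\
  \operatorname{Prim}(\mathcal{A}(V)) \times \operatorname{Prim}(B)
    \arrow[r, swap, "\Phi'"]
  &
  \operatorname{Prim}(\mathcal{A}(V)\otimes^{\mathrm{min}} B)
\end{tikzcd}
\]
to conclude that $\Phi_1$ restricted to $\operatorname{Prim}(V) \times \operatorname{Prim}(B)$ is a homeomorphism onto its image: since $\theta \times 1$ is a homeomorphism (Theorem \ref{thm2}(ii)), $\Psi$ is a homeomorphism (\cite{MK_JR}, Proposition $3.1$), and $\Phi'$ is a homeomorphism onto its image (\cite{Gui}, Theorem $5$), the relation $\Phi_1 = \Psi^{-1} \circ \Phi' \circ (\theta \times 1)$ exhibits the restricted $\Phi_1$ as a composite of such maps, hence is itself continuous (indeed a homeomorphism onto its image). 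In particular the restricted $\Phi_1$ is $\tau_w$-continuous into $\operatorname{Prim}(V \otimes^{\text{tmin}} B)$.

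Next I would use the factorization $\Phi = \Phi_2 \circ \Phi_1$ supplied by Lemma \ref{result3}, together with the observation made in the text that $\Phi_2$ is $\tau_w$-continuous on all of $\operatorname{Id}(V \otimes^{\text{tmin}} B)$ (the preimage of a sub-basic set $U(K)$ is controlled by the closed ideal generated by $\epsilon(K)$) and that $\Phi_2$ carries $\operatorname{Prim}(V \otimes^{\text{tmin}} B)$ into $\operatorname{Prim}(V \otimes^h B)$ via the chain of irreducible representations $\pi \mapsto \pi \circ q \circ \epsilon'$. Composing, $\Phi|_{\operatorname{Prim}(V) \times \operatorname{Prim}(B)} = \Phi_2|_{\operatorname{Prim}(V \otimes^{\text{tmin}} B)} \circ \Phi_1|_{\operatorname{Prim}(V) \times \operatorname{Prim}(B)}$ is a composite of continuous maps landing in $\operatorname{Prim}(V \otimes^h B)$, so it is continuous for the product hull-kernel topology on the source and the $\tau_w$-topology on the target. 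One should also remark, citing (\cite{AA}, Theorem $4$), that $\Phi$ genuinely maps into $\operatorname{Prim}(V \otimes^h B)$, so the target statement is meaningful.

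The one point requiring care — and the only place where there is something to check rather than merely cite — is the compatibility of topologies: I must make sure that the hull-kernel topology on $\operatorname{Prim}(V \otimes^{\text{tmin}} B)$ pushed forward by $\Phi_2$ lands inside the subspace $\tau_w$-topology on $\operatorname{Prim}(V \otimes^h B)$, i.e. that $\Phi_2$ restricted to primitive ideals is continuous as a map between these particular topological spaces and not merely as a map of ideal sets. This follows from the general $\tau_w$-continuity of $\Phi_2$ already established in the text, since the subspace topologies on $\operatorname{Prim}$ are by definition inherited from the $\tau_w$-topologies on the ambient $\operatorname{Id}$ spaces; thus no new argument is needed beyond bookkeeping. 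Accordingly I expect the proof to be short: state the factorization, cite the three homeomorphisms for $\Phi_1$, cite the continuity of $\Phi_2$ and its action on primitive ideals, and conclude. The main obstacle is purely expository — keeping straight which map is a homeomorphism onto its image versus onto the full space, and confirming that $\Phi$ does land in $\operatorname{Prim}(V \otimes^h B)$ so that the statement typechecks.
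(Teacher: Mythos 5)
Your proposal follows exactly the argument the paper gives: the commutative square showing $\Phi_1|_{\operatorname{Prim}(V)\times\operatorname{Prim}(B)}$ is a homeomorphism onto its image via $\theta\times 1$, $\Psi$, and $\Phi'$, combined with the factorization $\Phi=\Phi_2\circ\Phi_1$ and the previously noted $\tau_w$-continuity of $\Phi_2$ together with the fact that $\Phi$ lands in $\operatorname{Prim}(V\otimes^h B)$. This is correct and essentially identical to the paper's (unlabelled, in-text) proof; your extra remark on subspace topologies is harmless bookkeeping.
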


\begin{lem}\label{result4}

Let $I_0$ be a proper ideal of $V$ and $J_0$ a proper ideal of $B$. Then, 

\[
V\otimes^{h} J_0 + I_0 \otimes^{h} B = \bigcap \{ V\otimes^{h} Q + P \otimes^{h} B : P \in hull(I_0), Q\in hull(J_0) \}.
\]
\end{lem}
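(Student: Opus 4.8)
The plan is to reduce the statement to a known fact about $C^*$-algebras via the linking algebra and the identification of $\epsilon$-ideals with $i$-ideals. First I would observe that the inclusion ``$\subseteq$'' is trivial: if $P \in \operatorname{hull}(I_0)$ and $Q \in \operatorname{hull}(J_0)$, then $I_0 \subseteq P$ and $J_0 \subseteq Q$, hence $V \otimes^h J_0 + I_0 \otimes^h B \subseteq V \otimes^h Q + P \otimes^h B$, and intersecting over all such pairs preserves the containment. So the content is the reverse inclusion.

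For ``$\supseteq$'', the key identity is Lemma~\ref{result3} (and the Proposition quoted from \cite{AS}), which says $V \otimes^h J + I \otimes^h B = \ker(q_I \otimes^h q_J) = \ker(q_I \otimes^{\text{tmin}} q_J) \cap (V \otimes^h B) = \Phi_1(I,J) \cap (V \otimes^h B)$. Thus it suffices to prove the analogous intersection formula one level up, inside $V \otimes^{\text{tmin}} B$, namely
\[
\ker(q_{I_0} \otimes^{\text{tmin}} q_{J_0}) = \bigcap \{ \ker(q_P \otimes^{\text{tmin}} q_Q) : P \in \operatorname{hull}(I_0),\, Q \in \operatorname{hull}(J_0)\},
\]
and then intersect both sides with $V \otimes^h B$ (the intersection on the right commutes with $\cap(V\otimes^h B)$ since it is just a set intersection). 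Via the homeomorphism $\Psi: \operatorname{Prim}(V \otimes^{\text{tmin}} B) \to \operatorname{Prim}(\mathcal{A}(V)\otimes^{\text{min}} B)$ together with $\theta: \operatorname{Prim}(V) \cong \operatorname{Prim}(\mathcal{A}(V))$, and the fact that $\operatorname{hull}_V(I_0)$ corresponds to $\operatorname{hull}_{\mathcal{A}(V)}(\mathcal{A}(I_0))$, this becomes the statement that for a $C^*$-algebra $A$ and $C^*$-algebra $B$, with proper ideals $\mathcal{A}(I_0)$ of $A$ and $J_0$ of $B$,
\[
A \otimes^{\text{min}} J_0 + \mathcal{A}(I_0) \otimes^{\text{min}} B = \bigcap\{ A \otimes^{\text{min}} Q + P \otimes^{\text{min}} B : P \in \operatorname{hull}(\mathcal{A}(I_0)),\, Q \in \operatorname{hull}(J_0)\}.
\]
This last identity for the minimal tensor product of $C^*$-algebras is essentially the statement that $A \otimes^{\text{min}} J_0 + \mathcal{A}(I_0)\otimes^{\text{min}} B$ is the kernel of the quotient $C^*$-algebra $(A/\mathcal{A}(I_0)) \otimes^{\text{min}} (B/J_0)$, which is a $C^*$-algebra and hence has zero radical, i.e. equals the intersection of its primitive ideals; pulling these primitive ideals back through $q_{\mathcal{A}(I_0)} \otimes q_{J_0}$ and using that $\operatorname{Prim}((A/\mathcal{A}(I_0))\otimes^{\text{min}}(B/J_0))$ identifies (via \cite{Gui}, as quoted) with a subset of $\operatorname{Prim}(A/\mathcal{A}(I_0)) \times \operatorname{Prim}(B/J_0) = \operatorname{hull}(\mathcal{A}(I_0)) \times \operatorname{hull}(J_0)$ gives the desired formula. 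Alternatively, and perhaps more cleanly, one can argue directly: $x \in V \otimes^h B$ lies in $\bigcap\{V\otimes^h Q + P \otimes^h B\}$ iff $(q_P \otimes^h q_Q)(x) = 0$ for all $P \in \operatorname{hull}(I_0)$, $Q \in \operatorname{hull}(J_0)$, iff the image of $x$ in $(V/I_0)\otimes^{\text{tmin}}(B/J_0)$ is killed by every $q_{P/I_0} \otimes q_{Q/J_0}$, i.e. lies in the intersection of a separating family of primitive ideals of the $C^*$-algebra $C(V/I_0) \otimes^{\text{min}}(B/J_0)$ restricted appropriately — hence is $0$, which forces $x \in \ker(q_{I_0}\otimes^h q_{J_0}) = V \otimes^h J_0 + I_0 \otimes^h B$.

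The main obstacle is making precise the claim that the family of primitive ideals $\{\ker(q_P \otimes q_Q) : P \supseteq I_0,\, Q \supseteq J_0\}$ is \emph{separating} for $V \otimes^{\text{tmin}} B$ modulo $\ker(q_{I_0}\otimes^{\text{tmin}} q_{J_0})$ — that is, that their intersection is exactly $\ker(q_{I_0}\otimes^{\text{tmin}} q_{J_0})$ and not something larger. This is where one genuinely needs the structure of $\operatorname{Prim}$ of a minimal tensor product: every primitive ideal of $(V/I_0)\otimes^{\text{tmin}}(B/J_0)$ (equivalently of $(C(V/I_0))\otimes^{\text{min}}(B/J_0)$, passing through the linking algebra) contains one of the form $R \otimes C + C \otimes S$ with $R, S$ primitive, by the quoted results of \cite{Gui} and \cite{MK_JR}; since a $C^*$-algebra is semisimple, the intersection of \emph{all} its primitive ideals is zero, and one checks that replacing each primitive ideal by the smaller ``product-type'' ideal it contains does not change the (zero) intersection. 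Pulling back along the quotient map then yields the reverse inclusion. I would be careful to invoke that $I_0, J_0$ are \emph{proper} (so that $\operatorname{hull}(I_0)$ and $\operatorname{hull}(J_0)$ are nonempty and the quotients are nonzero $C^*$-algebras/TROs), which is exactly the hypothesis given.
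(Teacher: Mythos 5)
Your proposal is correct and follows essentially the same route as the paper: reduce to the corresponding identity for $\ker(q_{I_0}\otimes^{\text{tmin}} q_{J_0})$ via Lemma~\ref{result3}, pass to the linking algebra $\mathcal{A}(V)\otimes^{\text{min}} B$, and invoke the known intersection formula for minimal tensor products of $C^*$-algebras before pulling back through the injectivity of $\mathcal{A}$ on ideals. The only difference is that you sketch a proof of that $C^*$-algebra ingredient (semisimplicity of the quotient plus the product-type structure of its primitive ideals), where the paper simply cites it (\cite{Lazer}, Remark~2.4).
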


\begin{proof}

By the preceding Lemma \ref{result3}, it suffices to show that
for $I_0\in \operatorname{Id}(V) $ and $J_0 \in \operatorname{Id}(B)$, we have
\[\ker(q_{I_0} \otimes^{\text{tmin}} q_{J_0}) 
= \bigcap\{ \ker(q_P \otimes^{\text{tmin}} q_Q)  : (P, Q) \in \text{hull}(I_0) \times \text{hull}(J_0) \} .
\] 
Since the map \( q_{I_0} \otimes^{\text{tmin}} q_{J_0} : V\otimes^{\text{tmin}} B \to V/I_0 \otimes^{\text{tmin}} B/J_0 \) is a ternary homomorphism, applying the functor $\mathcal{A}$ and using (\cite{AKKKK}, Prop $4.6$), we obtain the $C^{\ast}$-homomorphism 

\[
 q_{\mathcal{A}(I_0)} \otimes^{\text{min}} q_{J_0}  : \mathcal{A}(V) \otimes^{\text{min}} B \to \mathcal{A}(V)/\mathcal{A} (I_0) \otimes^{\text{min}} B/J_0
\]

Using  (\cite{Lazer}, Remark $2.4$)  and (\cite{AAV}, Theorem $2.6(4)$), we obtain
 $$\ker( q_{\mathcal{A}(I_0)} \otimes^{\text{min}} q_{J_0})=  \bigcap\{ \ker(q_{\mathcal{A}(P)} \otimes^{\text{min}} q_Q)  : (P, Q) \in \text{hull}(I_0) \times \text{hull}(J_0) \}.$$

But since  $\ker( q_{\mathcal{A}(I_0)} \otimes^{\text{min}} q_{J_0}) = \mathcal{A} (\ker(q_{I_0} \otimes^{\text{tmin}} q_{J_0}))$ (\cite{AAV}, Lemma $2.7$) and $\mathcal{A}$ is injective on ideals, it follows that \[
\ker(q_{I_0} \otimes^{\text{tmin}} q_{J_0}) = \bigcap \{ \ker(q_P \otimes^{\text{tmin}} q_Q) : (P, Q) \in \operatorname{hull}(I_0) \times \operatorname{hull}(J_0) \}.
\]  
  
\end{proof}

In (\cite{AA}, Theorem $4(a)$),  it was shown that if $I$ and $J$ are primitive ideals of $V$ and $B$, respectively, then the ideal \( V \otimes^{h} J + I \otimes^{h} B \) is a primitive ideal of \( V \otimes^{h} B \). The last result shows that, for any proper ideals \( I_0 \) of \( V \) and \( J_0 \) of \( B \), the ideal \( V \otimes^{h} J_0 + I_0 \otimes^{h} B \) is an intersection of  primitive ideals of \( V \otimes^{h} B \) belonging to $\Phi(\operatorname{Prim}(V) \times \operatorname{Prim}(B))$.
Utilizing this fact along with Proposition \ref{fourmain}, the proof of the subsequent theorem follows the same reasoning as in (\cite{Kanimin}, Lemma 1.5). For completeness, we provide a detailed proof.

\begin{thm} \label{mainthm1} The mapping $\Phi: \operatorname{Id}(V) \times \operatorname{Id}(B) \to \operatorname{Id}(V \otimes^h B)$ is $\tau_w$-continuous.
\end{thm}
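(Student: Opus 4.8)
The plan is to prove $\tau_w$-continuity of $\Phi$ by checking that the preimage $\Phi^{-1}(U(K))$ is open in $\operatorname{Id}(V)\times\operatorname{Id}(B)$ for every $K\in\operatorname{Id}(V\otimes^h B)$. Fix a pair $(I,J)$ with $\Phi(I,J)\in U(K)$, i.e.\ $K\nsubseteq V\otimes^h J+I\otimes^h B$; I must produce a $\tau_w$-neighbourhood $\mathcal{N}_1\times\mathcal{N}_2$ of $(I,J)$ lying inside $\Phi^{-1}(U(K))$. The first step is to use Lemma~\ref{result4}: writing $\Phi(I,J)=\bigcap\{V\otimes^h Q+P\otimes^h B:(P,Q)\in\operatorname{hull}(I)\times\operatorname{hull}(J)\}$, the condition $K\nsubseteq\Phi(I,J)$ forces $K\nsubseteq V\otimes^h Q_0+P_0\otimes^h B$ for some particular $(P_0,Q_0)\in\operatorname{hull}(I)\times\operatorname{hull}(J)$. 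By the remark following Lemma~\ref{result4}, $V\otimes^h Q_0+P_0\otimes^h B=\Phi(P_0,Q_0)$ is itself a primitive ideal of $V\otimes^h B$ lying in $\Phi(\operatorname{Prim}(V)\times\operatorname{Prim}(B))$, and $\Phi(P_0,Q_0)\in U(K)$.

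The second step is to push this through the restricted continuity already in hand. By Proposition~\ref{fourmain}, $\Phi|_{\operatorname{Prim}(V)\times\operatorname{Prim}(B)}$ is continuous into $(\operatorname{Prim}(V\otimes^h B),\tau_w)$, so $(\Phi|_{\operatorname{Prim}\times\operatorname{Prim}})^{-1}(U(K)\cap\operatorname{Prim}(V\otimes^h B))$ is open in the product $hk$-topology and contains $(P_0,Q_0)$. Hence there are ideals $J_V\in\operatorname{Id}(V)$, $J_B\in\operatorname{Id}(B)$ with $P_0\in V(J_V)$, $Q_0\in V(J_B)$ (using the notation $V(J)=\{P\in\operatorname{Prim}:P\nsupseteq J\}$) such that $\Phi(P,Q)\in U(K)$ for every $(P,Q)\in V(J_V)\times V(J_B)$. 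Now set $\mathcal{N}_1=U(J_V)\subseteq\operatorname{Id}(V)$ and $\mathcal{N}_2=U(J_B)\subseteq\operatorname{Id}(B)$; since $P_0\in\operatorname{hull}(I)$ and $P_0\nsupseteq J_V$ we get $J_V\nsubseteq I$, i.e.\ $I\in U(J_V)$, and likewise $J\in U(J_B)$, so $\mathcal{N}_1\times\mathcal{N}_2$ is a $\tau_w$-neighbourhood of $(I,J)$.

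The third step closes the argument: I claim $\Phi(\mathcal{N}_1\times\mathcal{N}_2)\subseteq U(K)$. Take $(I',J')\in U(J_V)\times U(J_B)$. Using the reformulation $U(J_V)=\{I'\in\operatorname{Id}(V):\operatorname{hull}(I')\cap V(J_V)\neq\emptyset\}$ recorded after Proposition~\ref{result1}, pick $P_1\in\operatorname{hull}(I')\cap V(J_V)$ and similarly $Q_1\in\operatorname{hull}(J')\cap V(J_B)$. Then $(P_1,Q_1)\in V(J_V)\times V(J_B)$, so $\Phi(P_1,Q_1)\in U(K)$, i.e.\ $K\nsubseteq V\otimes^h Q_1+P_1\otimes^h B$. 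But $P_1\supseteq I'$ and $Q_1\supseteq J'$ give $\Phi(I',J')=V\otimes^h J'+I'\otimes^h B\subseteq V\otimes^h Q_1+P_1\otimes^h B$, hence $K\nsubseteq\Phi(I',J')$, that is $\Phi(I',J')\in U(K)$. This shows $\mathcal{N}_1\times\mathcal{N}_2\subseteq\Phi^{-1}(U(K))$, and since $(I,J)$ was arbitrary, $\Phi^{-1}(U(K))$ is open; as the $U(K)$ form a sub-basis, $\Phi$ is $\tau_w$-continuous.

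The main obstacle is the correct bookkeeping in the second step: one needs that the open set provided by Proposition~\ref{fourmain} in the product $hk$-topology actually contains a basic rectangle $V(J_V)\times V(J_B)$ around $(P_0,Q_0)$ — this is just the definition of the product topology together with the description of sub-basic $hk$-open sets as $V(J)$'s — and that passing between "$P\nsupseteq J_V$" statements for primitive ideals and "$J_V\nsubseteq I$" statements for general ideals is handled uniformly via Proposition~\ref{result1}. Everything else is a formal manipulation of hulls and the containment $\Phi(I',J')\subseteq\Phi(P_1,Q_1)$ when $P_1\supseteq I'$, $Q_1\supseteq J'$, which is immediate from the definition of $\Phi$.
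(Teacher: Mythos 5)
Your proposal is correct and follows essentially the same route as the paper: reduce via Lemma~\ref{result4} to a single pair $(P_0,Q_0)\in\operatorname{hull}(I)\times\operatorname{hull}(J)$ with $\Phi(P_0,Q_0)\in U(K)$, apply the restricted continuity of Proposition~\ref{fourmain} to get an open rectangle around $(P_0,Q_0)$, and transfer back to $\operatorname{Id}(V)\times\operatorname{Id}(B)$ using the hull description of the sub-basic sets together with the monotonicity $\Phi(I',J')\subseteq\Phi(P_1,Q_1)$. The only cosmetic difference is that you name the neighbourhoods as sub-basic sets $U(J_V)\times U(J_B)$ where the paper writes them directly as $\{I:\operatorname{hull}(I)\cap V_0\neq\emptyset\}$; these coincide by the remark following Proposition~\ref{result1}.
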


\begin{proof}
      We need  to show that if $K \in \operatorname{Id}(V \otimes^h B)$ and $I_0 \in \operatorname{Id}(V)$, $J_0 \in \operatorname{Id}(B)$ satisfy $\Phi(I_0, J_0) \in U(K)$,  there exist $\tau_w$-neighbourhoods $V$ of $I_0$ and $W$ of $J_0$ in $\operatorname{Id}(V)$ and $\operatorname{Id}(B)$, respectively, such that $\Phi(V \times W) \subseteq U(K)$. 

 Since $I_0$ and $J_0$ are necessarily proper, it follows from  Lemma \ref{result4} that $V\otimes^h J_0 + I_0 \otimes^h B$ is the intersection of ideals of the form
\[
V \otimes^h Q + P \otimes^h B,
\]
where $P \in h(I_0)$ and $Q \in h(J_0)$. Thus, since $K \nsubseteq V \otimes^h J_0+ I_0 \otimes^h B$,  there exists $P_0 \in h(I_0)$ and $Q_0 \in h(J_0)$ such that
 $K \nsubseteq (V \otimes^h Q_0 + P_0 \otimes^h B)$.
Since $\Phi$ is continuous on $\text{Prim}(V) \times \text{Prim}(B)$ (Proposition \ref{fourmain}) and
\[
\Phi(P_0, Q_0) \in U(K) \cap \text{Prim}(V \otimes^h B),
\]
there exist open neighbourhoods $V_0$ of $P_0$ in $\text{Prim}(V)$ and $W_0$ of $Q_0$ in $\text{Prim}(B)$ such that $\Phi(V_0 \times W_0) \subseteq U(K)$. Consequently, $K \nsubseteq V \otimes^h Q + P \otimes^h B$ for all $P \in V_0$ and $Q \in W_0$. Now, define
\[
U_1 = \{I \in \operatorname{Id}(V): h(I) \cap V_0 \neq \emptyset\}, \quad U_2 = \{J \in \operatorname{Id}(B): h(J) \cap W_0 \neq \emptyset\}.
\]
Then $I_0 \in U_1$ and $J_0 \in U_2$, and $U_1$,  $U_2$ are $\tau_w$-open in $\operatorname{Id}(V)$ and  $\operatorname{Id}(B)$, respectively. Finally, let $I \in U_1$ and $J \in U_2$. Then, there exist $P \in h(I) \cap V_0$ and $Q \in h(J) \cap W_0$. Then
\[
V \otimes^h J + I \otimes^h B \subseteq V \otimes^h Q + P \otimes^h B \quad \text{and} \quad  K \nsubseteq V \otimes^h Q + P \otimes^h B.
\]
This establishes that $V \otimes^h J + I \otimes^h B \in U(K)$, completing the proof. 
\end{proof}

Consider   the well-defined maps  \[
\Psi_V :  V\otimes^h J + I \otimes^h B \to I \quad \text{and} \quad \Psi_B : V \otimes^h J + I \otimes^h B \to J
\]
 from $\Phi(\operatorname{Id}'(V) \times \operatorname{Id}'(B))$ onto $\operatorname{Id}'(V)$ and $\operatorname{Id}'(B)$, respectively. Observe that  
\[
\Psi_V^{-1}(U(K)) = U(K \otimes^h B) \cap \Phi(\operatorname{Id}'(V) \times \operatorname{Id}'(B)).
\]
This implies that \( \Psi_V \) is \( \tau_w \)-continuous. Similarly, \( \Psi_B \) is also \( \tau_w \)-continuous. As an immediate consequence of this we therefore obtain:

\begin{thm}\label{mainresult2}
The restriction of $\Phi$ to $\operatorname{Id}'(V) \times \operatorname{Id}'(B)$ is a homeomorphism onto its image in $\operatorname{Id}'(V \otimes^h B)$.
\end{thm}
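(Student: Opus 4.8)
The plan is to show that $\Phi$ restricted to $\operatorname{Id}'(V) \times \operatorname{Id}'(B)$ is a continuous bijection onto its image with a continuous inverse, and the bulk of the work has in fact already been set up. Continuity of $\Phi$ is Theorem~\ref{mainthm1}, so its restriction to $\operatorname{Id}'(V) \times \operatorname{Id}'(B)$ is automatically $\tau_w$-continuous. The inverse is constructed out of the two maps $\Psi_V$ and $\Psi_B$ described just above the statement: the candidate inverse of $\Phi|_{\operatorname{Id}'(V) \times \operatorname{Id}'(B)}$ is $K \mapsto (\Psi_V(K), \Psi_B(K))$ on $\Phi(\operatorname{Id}'(V) \times \operatorname{Id}'(B))$. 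Since $\Psi_V$ and $\Psi_B$ are each $\tau_w$-continuous (from the identity $\Psi_V^{-1}(U(K)) = U(K \otimes^h B) \cap \Phi(\operatorname{Id}'(V) \times \operatorname{Id}'(B))$ and its analogue for $\Psi_B$), the pair map into the product topology is continuous, so the only remaining point is that this pair map really is a two-sided inverse of $\Phi$.

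First I would verify that $\Phi|_{\operatorname{Id}'(V) \times \operatorname{Id}'(B)}$ is injective, equivalently that $\Psi_V, \Psi_B$ are genuinely well defined: given proper ideals $I_1, I_2$ of $V$ and $J_1, J_2$ of $B$ with $V \otimes^h J_1 + I_1 \otimes^h B = V \otimes^h J_2 + I_2 \otimes^h B$, one must recover $I_1 = I_2$ and $J_1 = J_2$. The cleanest route is through Lemma~\ref{result3} and Proposition~2.7 (\cite{AS}, Corollary~2.6): $\Phi(I,J) = \ker(q_I \otimes^h q_J)$, so $V/I \otimes^h B/J \cong (V \otimes^h B)/\Phi(I,J)$ canonically, and one can detect $I$ and $J$ from the quotient — for instance $I \otimes^h B$ is the kernel of the further quotient map $V/I \otimes^h B/J \to V/I \otimes^h B/B$ is not quite right, so instead use that $\Phi(I, 0) = I \otimes^h B$ equals $\{x \in V \otimes^h B : (q_I \otimes^h \mathrm{id})(x) = 0\}$ and that $(V \otimes^h B)/(I \otimes^h B) \cong (V/I) \otimes^h B$ faithfully; composing with a faithful slice argument or with the fact that $\epsilon: V \otimes^h B \to V \otimes^{\mathrm{tmin}} B$ is injective together with the corresponding injectivity statement for $C^*$-algebras recovers $I$. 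Symmetrically for $J$. A shorter alternative: apply the functor $\mathcal{A}$ and invoke the analogous injectivity already known for $C^*$-algebra Haagerup tensor products (the homeomorphism onto its image established in \cite{AR} or \cite{AS}), then use that $\mathcal{A}$ is injective on ideals of $V$.

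Having injectivity, surjectivity of the pair map onto $\Phi(\operatorname{Id}'(V)\times\operatorname{Id}'(B))$ is immediate from the definition of $\Psi_V,\Psi_B$, and the identities $\Psi_V(\Phi(I,J)) = I$, $\Psi_B(\Phi(I,J)) = J$ show the pair map is a left inverse; injectivity of $\Phi$ then upgrades it to a two-sided inverse. Assembling: $\Phi|_{\operatorname{Id}'(V)\times\operatorname{Id}'(B)}$ is a continuous bijection onto its image, and $K \mapsto (\Psi_V(K),\Psi_B(K))$ is a continuous inverse, hence $\Phi|_{\operatorname{Id}'(V)\times\operatorname{Id}'(B)}$ is a homeomorphism onto its image in $\operatorname{Id}'(V \otimes^h B)$.

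\textbf{Main obstacle.} The only genuinely non-formal step is the well-definedness of $\Psi_V$ and $\Psi_B$, i.e. injectivity of $\Phi$ on proper ideals — equivalently, that the proper ideals $I$ and $J$ are uniquely determined by $V \otimes^h J + I \otimes^h B$. Everything else is bookkeeping with the sub-basic sets $U(K)$. I expect this to be handled by reduction to the linking $C^*$-algebra via the functor $\mathcal{A}$ together with the known separate behaviour of $I \otimes^h B$ and $V \otimes^h J$ inside $V \otimes^h B$ (using Proposition~2.7 to identify these with kernels of the slice-type quotient maps), so that the properness hypothesis guarantees $q_I \otimes^h \mathrm{id}$ and $\mathrm{id} \otimes^h q_J$ are nonzero and their kernels pin down $I$ and $J$ respectively.
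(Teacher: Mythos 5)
Your proposal is correct and follows essentially the same route as the paper: the paper's proof consists exactly of the continuity of $\Phi$ from Theorem~\ref{mainthm1} together with the $\tau_w$-continuity of the component maps $\Psi_V, \Psi_B$ via the identity $\Psi_V^{-1}(U(K)) = U(K \otimes^h B) \cap \Phi(\operatorname{Id}'(V) \times \operatorname{Id}'(B))$, the well-definedness of $\Psi_V,\Psi_B$ being taken as known. Your extra discussion of injectivity (recovering $I$ and $J$ from $V\otimes^h J + I\otimes^h B$ via the linking algebra) only supplies detail the paper leaves implicit.
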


The space $\operatorname{Prim}(V \otimes^h B)$ is not necessarily Hausdorff in general. The following corollary provides a necessary and sufficient condition for it to be Hausdorff in terms of $\operatorname{Prim}(V)$ and $\operatorname{Prim}(B)$.

\begin{cor}
    \textit{For a  TRO $V$ and a $C^*$-algebra $B$, the following conditions are equivalent:}
\begin{enumerate}
    \item[(i)] $\operatorname{Prim}(V)$ and $\operatorname{Prim}(B)$ are both Hausdorff.
    \item[(ii)] $\operatorname{Prim}(V \otimes^h B)$ is Hausdorff.
    \end{enumerate}
\end{cor}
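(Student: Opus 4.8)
The plan is to reduce the corollary to the known correspondence between the primitive ideal space of $V \otimes^h B$ and those of $V$ and $B$, and then invoke the standard characterization of when a product-type primitive ideal space is Hausdorff. Recall from the discussion preceding Proposition~\ref{fourmain} (together with \cite{AA}, Theorem~$4$) that $\Phi$ restricted to $\operatorname{Prim}(V) \times \operatorname{Prim}(B)$ maps \emph{onto} $\operatorname{Prim}(V \otimes^h B)$, and by Theorem~\ref{mainresult2} this restriction is a homeomorphism onto its image. Hence $\operatorname{Prim}(V \otimes^h B)$ is homeomorphic to $\operatorname{Prim}(V) \times \operatorname{Prim}(B)$ with the product $hk$-topology.

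Granting that homeomorphism, the proof of (i) $\Rightarrow$ (ii) is immediate: a product of two Hausdorff spaces is Hausdorff, so $\operatorname{Prim}(V) \times \operatorname{Prim}(B)$ is Hausdorff, and therefore so is the homeomorphic space $\operatorname{Prim}(V \otimes^h B)$. For (ii) $\Rightarrow$ (i), I would argue contrapositively: if, say, $\operatorname{Prim}(V)$ fails to be Hausdorff, pick distinct $P_1, P_2 \in \operatorname{Prim}(V)$ that cannot be separated by disjoint open sets, and fix any $Q \in \operatorname{Prim}(B)$ (the space is nonempty since $B \neq 0$; the degenerate case $B = 0$ or $V = 0$ can be dismissed at the outset). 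Then $(P_1, Q)$ and $(P_2, Q)$ are distinct points of $\operatorname{Prim}(V) \times \operatorname{Prim}(B)$ which cannot be separated, since any basic product-open neighbourhoods of them would project to separating open sets for $P_1, P_2$ in $\operatorname{Prim}(V)$. Transporting through the homeomorphism $\Phi$, the points $V \otimes^h Q + P_1 \otimes^h B$ and $V \otimes^h Q + P_2 \otimes^h B$ are distinct and non-separated in $\operatorname{Prim}(V \otimes^h B)$, contradicting (ii). The argument for a failure of Hausdorffness in $\operatorname{Prim}(B)$ is symmetric.

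The one genuine point requiring care — and the main obstacle — is verifying that the distinctness of $\Phi(P_1,Q)$ and $\Phi(P_2,Q)$ really follows from $P_1 \neq P_2$, i.e. that $\Phi$ is injective on $\operatorname{Prim}(V) \times \operatorname{Prim}(B)$. This, however, is exactly the content of the homeomorphism statement in Theorem~\ref{mainresult2} (restricted to primitive ideals, which are in particular proper), so no new work is needed; one only has to note that primitive ideals lie in $\operatorname{Id}'(V)$ and $\operatorname{Id}'(B)$. I would phrase the proof to cite Theorem~\ref{mainresult2} and the surjectivity statement (\cite{AA}, Theorem~$4$) explicitly, reducing everything to the elementary topological fact that for nonempty spaces $X, Y$ the product $X \times Y$ is Hausdorff if and only if both $X$ and $Y$ are. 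The write-up should therefore be short: establish the homeomorphism $\operatorname{Prim}(V \otimes^h B) \cong \operatorname{Prim}(V) \times \operatorname{Prim}(B)$, then quote the product-Hausdorff lemma.
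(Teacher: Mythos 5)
There is a genuine gap in your (i) $\Rightarrow$ (ii) direction: you assert, unconditionally, that $\Phi$ restricted to $\operatorname{Prim}(V)\times\operatorname{Prim}(B)$ maps \emph{onto} $\operatorname{Prim}(V\otimes^h B)$, so that $\operatorname{Prim}(V\otimes^h B)\cong\operatorname{Prim}(V)\times\operatorname{Prim}(B)$. That surjectivity is not what the cited results give. The statement quoted before Proposition~\ref{fourmain} only says that $\Phi$ sends $\operatorname{Prim}(V)\times\operatorname{Prim}(B)$ \emph{into} $\operatorname{Prim}(V\otimes^h B)$; the structure theorem for primitive ideals of the Haagerup tensor product (\cite{AA}, Theorem~$4(b)$, as it is used in the proof of Proposition~\ref{glimmmmain1}) says that every primitive ideal of $V\otimes^h B$ has the form $V\otimes^h Q+P\otimes^h B$ with $P$ and $Q$ closed \emph{prime} ideals of $V$ and $B$, not necessarily primitive ones. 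So without further hypotheses the image of $\operatorname{Prim}(V)\times\operatorname{Prim}(B)$ under $\Phi$ may be a proper subset of $\operatorname{Prim}(V\otimes^h B)$, and knowing that this image is Hausdorff does not yield (ii). This is exactly where the paper's proof uses the hypothesis (i): Hausdorffness of $\operatorname{Prim}(V)$ and $\operatorname{Prim}(B)$ forces every closed prime ideal of $V$ or $B$ to be primitive, which upgrades the prime pairs in \cite{AA}, Theorem~$4(b)$ to primitive pairs and only then gives surjectivity of $\Phi$ on $\operatorname{Prim}(V)\times\operatorname{Prim}(B)$. Your argument is repaired by inserting that observation, but as written the key homeomorphism is unsupported.

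Your (ii) $\Rightarrow$ (i) direction is fine and does not need surjectivity: it only uses that $\Phi(\operatorname{Prim}(V)\times\operatorname{Prim}(B))$ is a subspace of $\operatorname{Prim}(V\otimes^h B)$ homeomorphic to $\operatorname{Prim}(V)\times\operatorname{Prim}(B)$, which is Theorem~\ref{mainresult2} together with the ``into'' half of \cite{AA}, Theorem~$4$. Indeed, your contrapositive with non-separated pairs is a more laborious rendering of the paper's one-line argument that a subspace of a Hausdorff space is Hausdorff and that Hausdorffness passes to factors of a nonempty product. The concern you flag about injectivity of $\Phi$ is correctly resolved by Theorem~\ref{mainresult2}, so that part needs no further work.
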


\begin{proof}
    Suppose that $\operatorname{Prim}(V)$ and $\operatorname{Prim}(B)$ are both Hausdorff. Then every prime ideal of either $V$ or $B$ is primitive. Consequently, the canonical map  
\[
\Phi: \operatorname{Prim}(V) \times \operatorname{Prim}(B) \to \operatorname{Prim}(V \otimes^h B)
\]
is homeomorphic onto  $\operatorname{Prim}(V \otimes^h B)$ by (\cite{AA}, Theorem $4(b)$) and Theorem \ref{mainthm1}). Since $\operatorname{Prim}(V) \times \operatorname{Prim}(B)$ is Hausdorff, thus $\operatorname{Prim}(V \otimes^h B)$ is Hausdorff.

Conversely, if $\operatorname{Prim}(V \otimes^h B)$ is Hausdorff, then so is $\Phi(\operatorname{Prim}(V) \times \operatorname{Prim}(B))$ and hence so are $\operatorname{Prim}(V)$ and $\operatorname{Prim}(B)$.

\end{proof}

\section{Primal and Factorial ideals of $V \otimes^h B$}

In this section our first aim is to give a complete classification of primal ideals of $V \otimes^h B$. We begin with the definition.

\begin{defn}
     An $\epsilon$-ideal $P$ of $V \otimes^h B$ is called a  primal ideal if for any collection of $\epsilon$-ideals $I_1, I_2, \cdots ,I_{2n+1}$  of $V \otimes^h B$ satisfying $$I_1 I_2...I_{2n+1}=0,$$ then there exist atleast one index $i$ such that $I_i \subseteq P$ . We denote the set of all primal ideals of $V \otimes^h B$  by $\operatorname{Primal}(V \otimes^h B)$.

\end{defn}

We now examine the conditions under which sum of product ideals in $V \otimes^h B$ are  primal.

\begin{thm}\label{mainthmsec5}
    Let $I$ and $J$ be proper ideals in $V$ and $B$, respectively. Let $K=V \otimes^h J+ I \otimes^h B$. Then $K$ is primal in $V \otimes^h B$ if and only if $I$ is primal in $V$ and $J$ is primal in $B$.

\end{thm}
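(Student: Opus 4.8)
The strategy is to transfer the primality question for $K = V \otimes^h J + I \otimes^h B$ to the linking $C^*$-algebra via the isometry $i: V \otimes^h B \to \mathcal{A}(V) \otimes^h B$, and then invoke the known characterization of primal ideals of the Haagerup tensor product of two $C^*$-algebras from \cite{AR} (or \cite{AS}). First I would recall that $\epsilon$-ideals of $V \otimes^h B$ correspond to $i$-ideals, i.e.\ to closed ideals of $\mathcal{A}(V) \otimes^h B$ intersected back with $V \otimes^h B$ (Proposition 4.12 of \cite{AKKKK}), and that under this correspondence products and inclusions of ideals are preserved. The key algebraic identity is that $i$ carries $K = V \otimes^h J + I \otimes^h B$ to (the restriction of) $\mathcal{A}(V) \otimes^h J + \mathcal{A}(I) \otimes^h B$, using that $\mathcal{A}$ sends the ideal $I$ of $V$ to the ideal $\mathcal{A}(I)$ of $\mathcal{A}(V)$ and that $\ker(q_I \otimes^h q_J)$ is exactly this sum (the Proposition of \cite{AS}, Corollary 2.6, quoted in the preliminaries). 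Combined with Proposition~\ref{facmain}/\ref{primalmain}'s philosophy, $I$ is primal in $V$ iff $\mathcal{A}(I)$ is primal in $\mathcal{A}(V)$ (Proposition~\ref{primalmain}), so the TRO statement reduces to the $C^*$-statement for $\mathcal{A}(V)$ and $B$.

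The forward direction ($K$ primal $\Rightarrow$ $I$ primal in $V$ and $J$ primal in $B$): I would use the two continuous surjections $\Psi_V$ and $\Psi_B$ constructed just before Theorem~\ref{mainresult2}, together with the functoriality of products under $\Phi$. Concretely, if $I_1 \cdots I_{2n+1} = 0$ in $V$, then $\Phi(I_1, B) \cdots \Phi(I_{2n+1}, B)$ is a product of $\epsilon$-ideals of $V \otimes^h B$ that vanishes (since $I_1 \cdots I_{2n+1} \otimes^h B = 0$ and $\Phi(I_k, B) = I_k \otimes^h B$), forcing some $\Phi(I_k, B) \subseteq K$, and then applying $\Psi_V$ and using injectivity of $\mathcal{A}$ on ideals yields $I_k \subseteq I$. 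The argument for $J$ is symmetric using $\Phi(V, J_1) \cdots \Phi(V, J_{2n+1})$. Here I must be careful that products of $\epsilon$-ideals stay $\epsilon$-ideals and that the vanishing really is detected in $V \otimes^h B$ and not just in a completion — this is where I would lean on Lemma~\ref{result3} and the fact that $\epsilon$ is injective.

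The reverse direction ($I$ primal in $V$, $J$ primal in $B$ $\Rightarrow$ $K$ primal): I would pass to $\mathcal{A}(V) \otimes^h B$, where $\mathcal{A}(I)$ is primal in $\mathcal{A}(V)$ and $J$ is primal in $B$, and apply the known result for $C^*$-algebras that $\mathcal{A}(V) \otimes^h J + \mathcal{A}(I) \otimes^h B$ is then primal in $\mathcal{A}(V) \otimes^h B$. An $i$-ideal of $V \otimes^h B$ is the pullback of an ideal of $\mathcal{A}(V) \otimes^h B$; given $\epsilon$-ideals $I_1, \dots, I_{2n+1}$ of $V \otimes^h B$ with $I_1 \cdots I_{2n+1} = 0$, I would lift each $I_k$ to an ideal $\widetilde{I_k}$ of $\mathcal{A}(V) \otimes^h B$ with $\widetilde{I_k} \cap (V \otimes^h B) = I_k$, check that the product $\widetilde{I_1} \cdots \widetilde{I_{2n+1}}$ meets $V \otimes^h B$ trivially (hence, after enlarging suitably or using a minimal-lift argument, can be taken to vanish, or one works with the primal condition phrased via nets of primitive ideals as in Proposition~\ref{prop34}), conclude some $\widetilde{I_k} \subseteq \mathcal{A}(V) \otimes^h J + \mathcal{A}(I) \otimes^h B$, and intersect back down to get $I_k \subseteq K$.

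\textbf{Main obstacle.} The delicate point is the reverse direction: lifting a vanishing product of $\epsilon$-ideals of $V \otimes^h B$ to a vanishing (or near-vanishing) product of genuine closed ideals in $\mathcal{A}(V) \otimes^h B$, since a priori the product of the lifts need only have trivial intersection with $V \otimes^h B$ rather than be zero. I expect this to be handled either by replacing primality with the net-of-primitive-ideals characterization (Proposition~\ref{prop34}, transported to $V \otimes^h B$ via Proposition~\ref{fourmain} and Theorem~\ref{mainthm1}) so that only closures of unions and hull-kernel convergence are involved — quantities that behave well under the isometry $i$ — or by using Lemma~\ref{result4} to realize $K$ as an intersection of primitive ideals $\Phi(P,Q)$ and arguing that any $\epsilon$-ideal not contained in $K$ fails to be contained in some such $\Phi(P,Q)$, then running the primal test inside the well-understood primitive ideal space.
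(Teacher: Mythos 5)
Your forward direction is exactly the paper's: from $I_1\cdots I_{2n+1}=\{0\}$ form the product $(I_1\otimes^h B)\cdots(I_{2n+1}\otimes^h B)=(I_1\cdots I_{2n+1})\otimes^h B=\{0\}$, conclude $I_k\otimes^h B\subseteq K$ for some $k$, and read off $I_k\subseteq I$ (the paper does this directly from properness of $J$ rather than via $\Psi_V$, but that is the same fact). For the reverse direction, the primary route you propose -- lifting $\epsilon$-ideals to ideals of $\mathcal{A}(V)\otimes^h B$ and invoking the $C^*$-algebra result of \cite{AR} -- founders on precisely the obstacle you name: a vanishing product of $\epsilon$-ideals only gives lifts whose product meets $V\otimes^h B$ trivially, not a vanishing product in $\mathcal{A}(V)\otimes^h B$, and there is no obvious ``minimal lift'' fix. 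The paper does not attempt this; it runs exactly your second fallback. Namely: Lemma~\ref{result4} writes $K=\bigcap\{V\otimes^h Q+P\otimes^h B: P\in\operatorname{hull}(I),\,Q\in\operatorname{hull}(J)\}$; Proposition~\ref{prop34} gives nets $(P_\alpha)$, $(Q_\beta)$ of primitive ideals converging to every point of $\operatorname{hull}(I)$, resp.\ $\operatorname{hull}(J)$; continuity of $\Phi$ (Theorem~\ref{mainthm1}) makes $R_{\alpha,\beta}=V\otimes^h Q_\beta+P_\alpha\otimes^h B$ converge to each $V\otimes^h Q+P\otimes^h B$; and given $J_1,\dots,J_{2n+1}$ with no $J_i\subseteq K$, one finds a single $R_{\alpha,\beta}$ containing no $J_i$. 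The one ingredient your sketch leaves implicit, and which is indispensable to close the argument, is that $R_{\alpha,\beta}$ is a \emph{prime} ideal of $V\otimes^h B$ (\cite{AA}, Theorem 3): primeness is what converts ``$J_i\not\subseteq R_{\alpha,\beta}$ for all $i$'' into ``$J_1J_2\cdots J_{2n+1}\not\subseteq R_{\alpha,\beta}$'', hence $J_1J_2\cdots J_{2n+1}\neq\{0\}$. So: commit to the fallback, discard the lifting route, and add the primeness step, and your argument coincides with the paper's.
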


\begin{proof}

Let \( K = V \otimes^h J + I \otimes^h B \), and  let $I$ be primal in $V$ and $J$ be primal in $B$.  By Proposition \ref{result1} and Lemma 4.3,
\[
K= \bigcap \{ V\otimes^{h} J_0 + I_0 \otimes^{h} B : I_0 \in hull(I), J_0\in hull(J)\} .
\]

Since \( I \) and \( J \) are primal, by Proposition \ref{prop34}, there exist nets \( (P_\alpha)_{\alpha \in D} \subseteq \mathrm{Prim}(V) \) and \( (Q_\beta)_{\beta \in E} \subseteq \mathrm{Prim}(B) \) such that $P_\alpha \to P \quad \text{for each } P \in hull(I)$, and  $Q_\beta \to Q \quad \text{for each } Q \in hull(J)$. Define a directed set \( D \times E \) with the product order $(\alpha, \beta) \geq (\alpha', \beta') \iff \alpha \geq \alpha' \text{ and } \beta \geq \beta'$.
For each \( (\alpha, \beta) \in D \times E \), set
\[
R_{\alpha,\beta} = V \otimes^h Q_\beta + P_\alpha \otimes^h B.
\]
Then, by Theorem \ref{mainthm1}, we have \( R_{\alpha,\beta} \to V \otimes^h Q + P \otimes^h B \) for every \(P \in hull(I),   Q \in hull(J)\).

Now, let \( J_1, \ldots, J_{2n +1}\) be closed ideals in \( V \otimes^h B \) such that \( J_i \nsubseteq K \) for all \( i = 1, \ldots, 2n+1 \). For each \( i \), there exists \( I_0^i   \in hull(I), J_0^i\in hull(J)\) such that \( J_i \nsubseteq V \otimes^h J_0^i + I_0^i \otimes^h B \). Set $R_i= V \otimes^h J_0^i + I_0^i \otimes^h B$, for each $i=1,2,..., 2n+1$. Define the open sets $U(J_i) = \{ R \in \mathrm{Prim}(A \otimes_h B) : J_i \nsubseteq R \}$. Then \( U(J_i) \) is an open neighborhood of \( R_i \), and since \( R_{\alpha,\beta} \to R_i \), we have \( R_{\alpha,\beta} \in U(J_i) \) eventually for each \( i \). Hence, there exists \( (\alpha, \beta) \) such that $J_i \nsubseteq R_{\alpha,\beta} \quad \text{for all } i$. As \( R_{\alpha,\beta} \) is a prime ideal by (\cite{AA}, Theorem 3), it follows that $R_{\alpha,\beta} \nsupseteq J_1 J_2 \cdots J_{2n+1}$,so
$J_1 J_2 \cdots J_{2n+1} \neq \{0\}$. Therefore, \( K \) is primal.

Conversely, suppose \( K\) is primal. We will only show that $I$ is primal in $V$. Let $I_1$, $I_2$,...,$I_{2n+1}$ be ideals in $V$ with zero product then $$(I_1 \otimes^h B) (I_2 \otimes^h B)... (I_{2n+1} \otimes^h B)= (I_1I_2...I_{2n+1}) \otimes^h B= \{0\}$$ Thus, $I_i \otimes^h B \subseteq K$ for at least one value of $i$ which gives $I_i \subseteq I$. Thus, $I$ is primal.

\end{proof}

The next corollary is an immediate consequence of (\cite{AA}, Theorem $4(b)$) and Theorem \ref{mainthmsec5}.

\begin{cor}
   Every primitive ideal of $V \otimes^h B$ is primal. 
\end{cor}

Let \( I_1 \) and \( I_2 \) be ideals of a TRO \( V \). For \( J = I_1 \cap I_2 \), let \( \pi: V \to V/J \) be the natural quotient map. According to (\cite{ERR}, Proposition 3.1), the map \( \pi \otimes 1: V \otimes^h W \to V/J \otimes^h W \) is a quotient map. Consequently, \( \pi \otimes 1 (I_i \otimes^h W) \) is closed in \( V/J \otimes^h W \) and equal to \( I_i/J \otimes^h W \) for \( i = 1, 2 \) (\cite{AS}, Corollary 2.7). Utilizing this observation, we establish the following result.

\begin{prop} \label{result6}
Let \( V \) and \( W \) be TROs, and let \( \{I_i\}_{i=1}^n \) be a finite collection of ideals in \( V \). Then 
\[ \left( \sum_{i=1}^n I_i \right) \otimes^h W = \sum_{i=1}^n (I_i \otimes^h W). \]
\end{prop}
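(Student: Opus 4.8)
The plan is to reduce the statement to the case of two ideals, $n=2$, and then use the quotient-map observation stated just before the proposition together with an induction. The inclusion $\sum_i (I_i\otimes^h W)\subseteq(\sum_i I_i)\otimes^h W$ is trivial since each $I_i\subseteq\sum_i I_i$ and $\otimes^h$ is monotone in this sense (the inclusion $I_i\hookrightarrow\sum_i I_i$ is a complete isometry, so by Proposition~9.2.5(2) $I_i\otimes^h W\hookrightarrow(\sum_i I_i)\otimes^h W$ is a complete isometry with the expected image). The substance is the reverse inclusion, and I expect to get it by a duality/quotient argument rather than by a direct estimate on elementary tensors.

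First I would handle $n=2$. Set $J=I_1\cap I_2$ and let $\pi\colon V\to V/J$ be the quotient ternary morphism. By the cited results (\cite{ERR}, Proposition~3.1 and \cite{AS}, Corollary~2.7), $\pi\otimes 1\colon V\otimes^h W\to (V/J)\otimes^h W$ is a complete quotient map whose kernel is $J\otimes^h W$, and $(\pi\otimes 1)(I_i\otimes^h W)=(I_i/J)\otimes^h W$, which is \emph{closed}. In $V/J$ we have $I_1/J\cap I_2/J=\{0\}$, so $I_1/J+I_2/J$ is already a (closed) algebraic direct sum, hence $(I_1/J)\otimes^h W + (I_2/J)\otimes^h W$ is closed in $(V/J)\otimes^h W$; indeed it equals $(I_1/J+I_2/J)\otimes^h W=\big((I_1+I_2)/J\big)\otimes^h W$, again using that the inclusion of the closed ideal $(I_1+I_2)/J$ into $V/J$ induces a complete isometry on Haagerup tensor products. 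Now pull back: $(\pi\otimes 1)^{-1}\big(((I_1+I_2)/J)\otimes^h W\big)=(I_1+I_2)\otimes^h W$ since $I_1+I_2\supseteq J=\ker(\pi\otimes 1)$, while $(\pi\otimes 1)^{-1}$ of $(I_1/J)\otimes^h W+(I_2/J)\otimes^h W$ equals $I_1\otimes^h W+I_2\otimes^h W+J\otimes^h W=I_1\otimes^h W+I_2\otimes^h W$ (as $J\subseteq I_1$). Because $\pi\otimes 1$ is a quotient map it is in particular surjective with the stated kernel, so taking preimages of these two equal closed sets gives $(I_1+I_2)\otimes^h W=I_1\otimes^h W+I_2\otimes^h W$, and in particular the right-hand side is closed.

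Then I would run the induction on $n$. Assuming the identity for $n-1$ ideals, write $\sum_{i=1}^n I_i=\big(\sum_{i=1}^{n-1}I_i\big)+I_n$; the sum $\sum_{i=1}^{n-1}I_i$ is again a closed ideal of $V$, so applying the $n=2$ case to the pair $\big(\sum_{i=1}^{n-1}I_i,\,I_n\big)$ yields
\[
\Big(\sum_{i=1}^n I_i\Big)\otimes^h W=\Big(\sum_{i=1}^{n-1}I_i\Big)\otimes^h W+I_n\otimes^h W,
\]
and the induction hypothesis rewrites the first summand as $\sum_{i=1}^{n-1}(I_i\otimes^h W)$, giving the claim. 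The only point requiring care — and the one I flag as the main obstacle — is justifying that the algebraic sum $(I_1/J)\otimes^h W+(I_2/J)\otimes^h W$ is already closed and coincides with $(I_1/J+I_2/J)\otimes^h W$; this is exactly where disjointness $I_1/J\cap I_2/J=\{0\}$ is used, and it hinges on the fact that for an internal direct sum of closed ideals $E=E_1\oplus E_2$ in an operator space, $E\otimes^h W=E_1\otimes^h W\oplus E_2\otimes^h W$ (via the projections onto $E_1,E_2$ being complete contractions and Proposition~9.2.5), together with \cite{AS}, Corollary~2.7 identifying images of $I_i/J\otimes^h W$. Once that bookkeeping is in place the rest is a formal consequence of the quotient-map property of $\pi\otimes 1$.
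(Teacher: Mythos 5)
Your proposal is correct and follows essentially the same route as the paper's proof: reduce to $n=2$, quotient by $J=I_1\cap I_2$ to reach the case $I_1\cap I_2=\{0\}$, handle that case via the canonical projections onto the summands being completely contractive TRO morphisms (so that $u=(P_1\otimes 1)(u)+(P_2\otimes 1)(u)$), and pull back through the quotient map $\pi\otimes 1$ using that $I_1\otimes^h W+I_2\otimes^h W$ contains its kernel $J\otimes^h W$. The only cosmetic difference is that you make the induction from $n=2$ to general $n$ explicit, whereas the paper simply states it proves the case $n=2$.
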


\begin{proof}
We shall only prove the result for \( n=2 \). Let \( I_1 \) and \( I_2 \) be two ideals in \( V \). Since \( I_1 + I_2 \) is closed (\cite{AA}, Lemma 2), we assume that \( V = I_1 + I_2 \). We shall proceed in two cases.

In the first case, assume that \( J = I_1 \cap I_2 = 0 \), so \( V \cong I_1 \oplus I_2 \). Clearly,
\[ I_1 \otimes^h W + I_2 \otimes^h W \subseteq V \otimes^h W. \]
For the reverse inclusion, consider the onto TRO homomorphisms \( P_i: V \to I_i \) defined as \( P_i(x_1, x_2) = x_i \) for \( i = 1, 2 \). Being TRO homomorphisms, each \( P_i \) is completely contractive, so \( P_i \otimes^h 1 : V \otimes^h W \to I_i \otimes^h W \) is completely contractive. Moreover, \( P_1 + P_2 = 1 \), thus for any \( u \in V \otimes^h W \), we have 
\[ u = (P_1 \otimes 1)(u) + (P_2 \otimes 1)(u) \in I_1 \otimes^h W + I_2 \otimes^h W. \]
Thus, the reverse inclusion is also established.

For the general case, note that \( \pi(I_1) \cap \pi(I_2) = 0 \), so applying the first case to \( V/J \otimes^h W \), we obtain 
\[ (V/J) \otimes^h W = (I_1/J) \otimes^h W + (I_2/J) \otimes^h W. \]
Therefore, \( \pi \otimes 1 \) maps \( I_1 \otimes^h W + I_2 \otimes^h W \) onto \( V/J \otimes^h W \). Moreover, \( I_1 \otimes^h W + I_2 \otimes^h W \) contains the kernel of \( \pi \otimes 1 \), which is \( J \otimes^h W \). Thus, by (\cite{AS}, Corollary 2.7),
\[ (\pi \otimes 1)^{-1}(\pi \otimes 1(I_1 \otimes^h W + I_2 \otimes^h W)) = I_1 \otimes^h W + I_2 \otimes^h W. \]
From this, we obtain 
\[ (I_1 + I_2) \otimes^h W = I_1 \otimes^h W + I_2 \otimes^h W. \]
\end{proof}

\begin{cor} \label{sec5cor}
    Let $P$ be an ideal of $V \otimes^h B$. Then $P$ is primal if and only if there are  primal ideals $I$ and $J$ of $V$ and $B$ respectively such that $P \supseteq I \otimes^h B + V \otimes^h J$. 
\end{cor}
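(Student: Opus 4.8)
The plan is to deduce both directions from Theorem~\ref{mainthmsec5} together with the Remark that primality is preserved under passing to a larger ideal, so that the only real work is to massage the ideal $P$ into the right shape. First I would prove the easy direction: suppose there exist primal ideals $I$ of $V$ and $J$ of $B$ with $P \supseteq I \otimes^h B + V \otimes^h J$. I may assume $I$ and $J$ are proper (if $I = V$ or $J = B$ the containing ideal is all of $V \otimes^h B$, hence $P = V \otimes^h B$, which is vacuously primal, or one reduces to a smaller honest ideal). By Theorem~\ref{mainthmsec5}, $K := I \otimes^h B + V \otimes^h J$ is a primal ideal of $V \otimes^h B$. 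Since $P \supseteq K$, the Remark in Section~3 (an ideal containing a primal ideal is primal) — applied now in $V \otimes^h B$, which follows by the same one-line argument, since $J_1 \cdots J_{2n+1} = 0$ and $J_k \subseteq K \subseteq P$ forces $J_k \subseteq P$ — gives that $P$ is primal.

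For the converse, suppose $P$ is a primal $\epsilon$-ideal of $V \otimes^h B$. The idea is to recover $I$ and $J$ by contracting $P$ along the two natural "slice" maps. Set
\[
I := \{\, x \in V : x \otimes^h B \subseteq P \,\}, \qquad J := \{\, y \in B : V \otimes^h y \subseteq P \,\}.
\]
I would first check that $I$ is a closed ideal of $V$ and $J$ a closed ideal of $B$; this is routine from the fact that $P$ is a closed $\epsilon$-ideal and that $I \otimes^h B$, $V \otimes^h J$ are closed (by Proposition~\ref{result6} and the identification of $\ker(q_I \otimes^h q_J)$). By construction $I \otimes^h B + V \otimes^h J \subseteq P$, so it remains to show $I$ is primal in $V$ and $J$ is primal in $B$. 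I give the argument for $I$; the one for $J$ is symmetric. Let $I_1, \dots, I_{2n+1}$ be ideals of $V$ with $I_1 I_2 \cdots I_{2n+1} = \{0\}$. Then, exactly as in the last paragraph of the proof of Theorem~\ref{mainthmsec5},
\[
(I_1 \otimes^h B)(I_2 \otimes^h B) \cdots (I_{2n+1} \otimes^h B) = (I_1 I_2 \cdots I_{2n+1}) \otimes^h B = \{0\},
\]
and each $I_k \otimes^h B$ is an $\epsilon$-ideal of $V \otimes^h B$ (it equals $\ker(q_{I_k} \otimes^h q_{\{0\}})$, hence is an $\epsilon$-ideal). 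Since $P$ is primal, $I_k \otimes^h B \subseteq P$ for some $k$, which by the definition of $I$ means $I_k \subseteq I$. Hence $I$ is primal, and likewise $J$.

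The main obstacle, and the point where I would be most careful, is the verification that the sets $I$ and $J$ defined above are genuinely closed ideals — equivalently, that the "slice" assignments behave well — and that $I_k \otimes^h B$ really is an $\epsilon$-ideal of $V \otimes^h B$ so that the defining property of a primal $\epsilon$-ideal applies to it. For the former, one uses that $x \mapsto x \otimes^h B$ respects the ternary operations and that $P$, being an $\epsilon$-ideal, is closed under the relevant multiplications coming from $V \otimes^{\mathrm{tmin}} B$; for the latter, one invokes Proposition~2.7 (Corollary~2.6 of \cite{AS}) with $J = \{0\}$ on the $B$ side, giving $\ker(q_{I_k} \otimes^h q_{\{0\}}) = I_k \otimes^h B + V \otimes^h \{0\} = I_k \otimes^h B$, together with Lemma~\ref{result3} to see this is $\ker(q_{I_k} \otimes^{\mathrm{tmin}} q_{\{0\}}) \cap (V \otimes^h B)$, hence an $\epsilon$-ideal. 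Once these bookkeeping points are in place, the equivalence falls out immediately from Theorem~\ref{mainthmsec5} and the monotonicity of primality.
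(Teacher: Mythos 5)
Your proof is correct, and the forward direction coincides with the paper's (Theorem~\ref{mainthmsec5} plus the observation that any ideal containing a primal ideal is primal). In the converse your route differs in one genuine respect: the paper invokes Zorn's lemma to pick ideals $I$ of $V$ and $J$ of $B$ maximal subject to $I\otimes^h B\subseteq P$ and $V\otimes^h J\subseteq P$, and then needs Proposition~\ref{result6} (additivity of $\otimes^h$ over finite sums of ideals) to turn $I_i\otimes^h B\subseteq P$ into $(I+I_i)\otimes^h B\subseteq P$ and hence, by maximality, $I_i\subseteq I$. You instead realize these maximal ideals explicitly as the slice sets $I=\{x\in V: x\otimes B\subseteq P\}$ and $J=\{y\in B: V\otimes y\subseteq P\}$, for which $I_i\otimes^h B\subseteq P\Rightarrow I_i\subseteq I$ is immediate from the definition; this avoids both Zorn's lemma and Proposition~\ref{result6}, at the price of having to check that the slices are closed ideals of $V$ and $B$ (which needs $P$ to be an $\epsilon$-ideal, so that $xv^{*}w\otimes bc^{*}d\in P$ and $B=BB^{*}B$ give $xv^{*}w\otimes B\subseteq P$). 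That verification is exactly what the paper itself does later, before Lemma~\ref{glimm2}, via (\cite{EB}, Lemma 2.13), so your argument is consistent with the paper's machinery; you are also right, and more careful than the written proof, to note that $I_k\otimes^h B=\ker(q_{I_k}\otimes^h \mathrm{id})$ must be checked to be an $\epsilon$-ideal before the primality of $P$ can be applied to it. The remaining steps (zero product of the $I_k\otimes^h B$, testing $P$ against them) are identical to the paper's.
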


\begin{proof}
    Let $K= I \otimes^h B + V \otimes^h J$. By last theorem, $K$ is primal and hence so is $P$.

    Conversely, let $P$ be a primal. By Zorn's lemma, we choose ideals $I$ and $J$ of $V$ and $B$, respectively, which are maximal with respect to the property that $V \otimes^h J \subseteq P$ and $I \otimes^h B \subseteq P$. Suppose $I_1, I_2,...I_{2n+1}$ are  ideals in $V$ with zero product. Since $P$ is primal and the  ideals $I_i \otimes^h B$ have zero product, $I_i \otimes^h B \subseteq P $ for at least one value of $i$. Hence by Proposition \ref{result6}, $$(I+I_i)\otimes^h B = I \otimes^h B+ I_i \otimes^h B  .$$ By the maximality of $I$, this implies that $I_i \subseteq I$. Thus, $I$ is primal. The same happens with $J$.
\end{proof}

In view of Theorem 5.2, the map $\Phi$ restricts to a map 
\[
\operatorname{Primal}(V) \times \operatorname{Primal}(B) \to \operatorname{Primal}(V \otimes^h B)
\]

In the following theorem, we establish that when restricted to minimal primal ideals, this map becomes a homeomorphism.

\begin{thm} \label{miniprimal}
    Let $P$ be a proper ideal in $V \otimes^h B$. Then $P$ is a closed minimal primal ideal of $V \otimes^h B$ if and only if $P = I \otimes^h B + V \otimes^h J$ for some minimal closed primal ideals $I$ and $J$ of $V$ and $B$, respectively. The map $(I, J) \mapsto V \otimes^h J + I \otimes^h B$ is a homeomorphism from $\operatorname{Min-Primal}(V) \times \operatorname{Min-Primal}(B)$ to $\operatorname{Min-Primal}(V \otimes^h B)$.
\end{thm}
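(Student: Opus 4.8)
The plan is to prove the two statements in sequence, deriving the topological assertion from a combination of Corollary~\ref{sec5cor}, Theorem~\ref{mainresult2}, and general facts about minimal primal ideals. First I would establish the set-theoretic description of $\operatorname{Min\text{-}Primal}(V \otimes^h B)$. If $P$ is a minimal closed primal ideal of $V \otimes^h B$, then by Corollary~\ref{sec5cor} there are primal ideals $I$ of $V$ and $J$ of $B$ with $I \otimes^h B + V \otimes^h J \subseteq P$; by Theorem~\ref{mainthmsec5} the ideal $K = I \otimes^h B + V \otimes^h J$ is itself primal, so minimality of $P$ forces $P = K$. Next I would shrink $I$ and $J$: by Zorn's lemma pick primal ideals $I' \subseteq I$ and $J' \subseteq J$ that are minimal among primal ideals of $V$ and $B$ respectively (using the standard fact that below every primal ideal of a $C^*$-algebra, hence of a TRO via $\theta$ and Proposition~\ref{primalmain}, there is a minimal one). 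Then $I' \otimes^h B + V \otimes^h J'$ is primal and contained in $P$, so again by minimality $P = I' \otimes^h B + V \otimes^h J'$, and I would then argue $I' = I$, $J' = J$: since $I' \otimes^h B + V \otimes^h J' = I \otimes^h B + V \otimes^h J$, intersecting with $V \otimes^h B$ and applying the injectivity part of Theorem~\ref{mainresult2} (the restriction of $\Phi$ to $\operatorname{Id}'(V) \times \operatorname{Id}'(B)$ is a homeomorphism onto its image, in particular injective) gives $(I,J) = (I',J')$. Conversely, if $I, J$ are minimal closed primal, then $K = I \otimes^h B + V \otimes^h J$ is primal by Theorem~\ref{mainthmsec5}; to see it is minimal, suppose $P \subseteq K$ is primal; by Corollary~\ref{sec5cor} applied to $P$ there are primal $I_1 \subseteq V$, $J_1 \subseteq B$ with $I_1 \otimes^h B + V \otimes^h J_1 \subseteq P \subseteq K = I \otimes^h B + V \otimes^h J$, and projecting onto the two coordinates (via $\Psi_V$, $\Psi_B$, which are monotone since $\Phi$ is injective on $\operatorname{Id}'$) yields $I_1 \subseteq I$ and $J_1 \subseteq J$, forcing $I_1 = I$, $J_1 = J$ by minimality, hence $P = K$.

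For the topological statement, I would invoke the homeomorphism of Theorem~\ref{mainresult2}: $\Phi$ restricted to $\operatorname{Id}'(V) \times \operatorname{Id}'(B)$ is a homeomorphism onto its image in $\operatorname{Id}'(V \otimes^h B)$, both carrying the $\tau_w$-topology. By the first part, $\Phi$ maps the subset $\operatorname{Min\text{-}Primal}(V) \times \operatorname{Min\text{-}Primal}(B)$ bijectively onto $\operatorname{Min\text{-}Primal}(V \otimes^h B)$. Since a restriction of a homeomorphism to a subset (with the subspace topology on both ends) is again a homeomorphism onto its image, the result follows, \emph{provided} the topology $\tau$ on the spaces of minimal primal ideals is exactly the subspace $\tau_w$-topology inherited from $\operatorname{Id}'$. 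I would note that this is the standard convention (as in the $C^*$-algebra case, e.g.\ \cite{qs1991}), so this is a matter of unwinding definitions.

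The main obstacle I anticipate is the step showing that $K = I \otimes^h B + V \otimes^h J$ with $I, J$ minimal primal is actually \emph{minimal} as a primal ideal of $V \otimes^h B$ — i.e.\ that one cannot find a strictly smaller primal ideal. The delicate point is that a primal ideal $P \subseteq K$ need not a priori be of product-sum form on the nose; Corollary~\ref{sec5cor} only guarantees $P$ \emph{contains} some product-sum of primal ideals. So the argument must carefully produce $I_1, J_1$ with $I_1 \otimes^h B + V \otimes^h J_1 \subseteq P$ and then bound $I_1, J_1$ above by $I, J$ using the monotonicity/injectivity of the coordinate maps $\Psi_V, \Psi_B$ on $\Phi(\operatorname{Id}'(V) \times \operatorname{Id}'(B))$ — and here one needs that $P$ itself sits inside the image of $\Phi$, which is not automatic unless $P = K$. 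I would handle this by not assuming $P$ is in the image: instead, from $I_1 \otimes^h B + V \otimes^h J_1 \subseteq P \subseteq K$ and the injectivity of $\Phi$ on $\operatorname{Id}'$, deduce $I_1 \subseteq I$ and $J_1 \subseteq J$ directly by intersecting the outer inclusion with $V \otimes^h B$ and comparing, then minimality gives equality and hence $P = K$. A secondary subtlety worth a line is confirming that $\Phi$ sends a \emph{pair} of minimal primals to something with no smaller primal below it even when only one of the two coordinates could potentially shrink; the coordinatewise argument above covers both simultaneously.
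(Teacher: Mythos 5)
Your proposal is correct and follows essentially the same route as the paper: Corollary~\ref{sec5cor} plus Theorem~\ref{mainthmsec5} give the set-theoretic bijection, and the homeomorphism is inherited by restricting Theorem~\ref{mainresult2} to the min-primal subsets. The only cosmetic difference is that where you shrink $I,J$ to minimal primals via Zorn and then recover $I=I'$, $J=J'$ from injectivity of $\Phi$, the paper argues directly that any primal $M\subseteq I$ yields $M\otimes^h B+V\otimes^h J\subseteq P$ and hence $M=I$ by minimality of $P$ — avoiding the (true but unstated) fact that every primal ideal contains a minimal one.
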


\begin{proof}
    Suppose $I$ and $J$ are minimal  primal ideals of $V$ and $B$, respectively, and let $P = I \otimes^h B + V \otimes^h J$. Suppose $L \subseteq P$ for some primal ideal $L$ of $V \otimes^h B$. Thanks to Corollary \ref{sec5cor} there exist primal ideals $M$ of $V$ and $N$ of $B$ such that $L \supseteq V \otimes^h N + M \otimes^h B$, so 
\[
V \otimes^h N + M \otimes^h B \subseteq I \otimes^h B + V \otimes^h J.
\]
Thus, $M \subseteq I$ and $N \subseteq J$. Since $I$ and $J$ are minimal primal ideals, it follows that $I = M$ and $J = N$, which implies $L = P$. This shows that we have a canonical map \[ \operatorname{Min-Primal} (V) \times \operatorname{Min-Primal} (B) \to \operatorname{Min-Primal} (V \otimes h B).
\]
Let $P \in \operatorname{Min-Primal}(V \otimes^h B)$. Then, again in view of Corollary \ref{sec5cor}, $P \supseteq I \otimes^h B + V \otimes^h J$ for some closed primal ideals $I$ of $V$ and $J$ of $B$. By Theorem 5.1, $I \otimes^h B + V \otimes^h J$ is primal and so $P=I \otimes^h B + V \otimes^h J$  We claim that $I \in \operatorname{Min-Primal}(V)$. Suppose $M \subseteq I$ for some $M \in \operatorname{Primal}(V)$. Then 
\[
M \otimes^h B + V \otimes^h J \subseteq I \otimes^h B + V \otimes^h J = P.
\]
But since $P$ is minimal, we have $M \otimes^h B + V \otimes^h J = I \otimes^h B + V \otimes^h J$, and therefore $I = M$. Similarly, $J \in \operatorname{Min-Primal}(B)$. 
\end{proof}

    \begin{ex}
    Let \( \mathcal{H} \)  \( \mathcal{K} \) and \(\mathcal{L} \) be infinite-dimensional Hilbert spaces, and let \( u \in B(\mathcal{H}, \mathcal{K}) \) be a fixed partial isometry. Define \( V  \) to be the set of all sequences \( x = (x_n)_{n \geq 1} \) such that there exists a scalar \( \lambda(x) \in \mathbb{C} \) and a compact operator \( c(x) \in B(\mathcal{H}, \mathcal{K}) \) satisfying:
\[
\lim_{n \to \infty} \|x_{2n} - \lambda(x) u\| = 0 \quad \text{and} \quad \lim_{n \to \infty} \|x_{2n+1} - (\lambda(x) u + c(x))\| = 0.
\]
 With pointwise operations and the supremum norm, \( V \) is a TRO. For each \( n \geq 1 \), define the ideal $P_n = \{ x \in V : x_n = 0 \}$, which is a primitive ideal of \( V \). In fact, each \( P_n \) is minimal primal. To see this, suppose \( I \) is an ideal properly contained in \( P_n \). Let \( y \in P_n \setminus I \), and let \( J_1 \) be the ideal of \( V \) generated by \( y \). Set \( J_2 = J_1 \), and define $J_3 = \{ x \in V : x_k = 0 \text{ for all } k \ne n \}$. Then \( J_1 J_2^*J_3 = \{0\} \), but \( J_1 \not\subseteq I \) and \( J_2 \not\subseteq I \), so \( I \) is not primal. Hence \( P_n \) is minimal primal. Let $B= B(\mathcal{L})$, then $P_n \otimes^h B(\mathcal{L})+ V \otimes^h K(\mathcal{L}) $ is a minimal primal ideal of $V \otimes^h B(\mathcal{L})$ for each $n$. 
\end{ex}

 Our next aim is to provide a comprehensive classification of factorial ideals in \( V \otimes^h B \). Factorial ideals in Haagerup tensor product of \( C^* \)-algebras have been extensively studied (see, for example, \cite{AR}). We extend the concept to the Haagerup tensor product of TROs, providing new insights and classifications in this more general context.

\begin{defn}
     A representation $\pi$ of \( V \otimes^h B \) is called \emph{factorial} if the corresponding representation $\rho$ of \( V \otimes^{\text{tmax}} B \) is factorial.
\end{defn}

\begin{defn}
    A closed subspace $P$ of $V \otimes^h B$ is called a \emph{factorial} ideal of $V \otimes^h B$ if $P= \operatorname{ker}(\pi),$ for some  factorial representation $\pi$ of $V \otimes^h B$.
\end{defn}

\begin{prop}\label{facmain3}
    If $I$ and $J$ are factorial ideals of $V$ and $B$ respectively then $I \otimes^h B+ V \otimes^h J$ is a factorial ideal of $V \otimes^h B$.
\end{prop}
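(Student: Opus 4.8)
The plan is to reduce the statement to the corresponding fact for $C^*$-algebras via the linking-algebra functor $\mathcal{A}$, exactly in the spirit of Propositions~\ref{facmain} and~\ref{primalmain}. First I would recall that, by Proposition~\ref{facmain}, $I$ factorial in $V$ and $J$ factorial in $B$ mean precisely that $\mathcal{A}(I)$ is a factorial ideal of the $C^*$-algebra $\mathcal{A}(V)$ and $J$ is a factorial ideal of $B$. The known result for $C^*$-algebras (see \cite{AR}) says that $\mathcal{A}(I)\otimes^h B + \mathcal{A}(V)\otimes^h J$ is a factorial ideal of $\mathcal{A}(V)\otimes^h B$. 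So the heart of the argument is to transport this back through the natural isometry $i\colon V\otimes^h B \to \mathcal{A}(V)\otimes^h B$ and identify $i^{-1}\bigl(\mathcal{A}(I)\otimes^h B + \mathcal{A}(V)\otimes^h J\bigr)$ with $I\otimes^h B + V\otimes^h J$.

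The key steps, in order, would be: (1) Using Proposition~9.2.5 of \cite{ER} (complete isometry is preserved by $\otimes^h$), observe that $i = \iota_V \otimes \mathrm{id}_B$ where $\iota_V\colon V \hookrightarrow \mathcal{A}(V)$ is the canonical complete isometry, so $i$ is a complete isometry and $V\otimes^h B$ sits inside $\mathcal{A}(V)\otimes^h B$; moreover $i$ carries $I\otimes^h B$ into $\mathcal{A}(I)\otimes^h B$ and $V\otimes^h J$ into $\mathcal{A}(V)\otimes^h J$, and $i^{-1}(\mathcal{A}(I)\otimes^h B) = I\otimes^h B$, $i^{-1}(\mathcal{A}(V)\otimes^h J) = V\otimes^h J$ (the latter from the fact that $\mathcal{A}(I)\cap V = I$ together with the slice/kernel descriptions in Corollary~2.6 of \cite{AS} applied to the quotient maps $q_I$ and $q_J$). (2) Combine these to get $i^{-1}\bigl(\mathcal{A}(I)\otimes^h B + \mathcal{A}(V)\otimes^h J\bigr) = I\otimes^h B + V\otimes^h J$; here one uses that $i^{-1}$ of a sum of these two particular closed ideals is the sum of the preimages, which follows because $i$ is isometric and the relevant quotient $\mathcal{A}(V)\otimes^h B \big/ \bigl(\mathcal{A}(I)\otimes^h B + \mathcal{A}(V)\otimes^h J\bigr) \cong \mathcal{A}(V)/\mathcal{A}(I) \otimes^h B/J$ restricts on $V\otimes^h B$ to $V/I\otimes^h B/J$. (3) Take a factorial representation $\widetilde\pi$ of $\mathcal{A}(V)\otimes^h B$ with kernel $\mathcal{A}(I)\otimes^h B + \mathcal{A}(V)\otimes^h J$ (this is the $C^*$-algebra input), restrict through $i$ to get $\pi := \widetilde\pi\circ i$ on $V\otimes^h B$, and check that $\pi$ is a factorial representation of $V\otimes^h B$ in the sense of the definition above: one must produce the representation $\rho$ of $V\otimes^{\mathrm{tmax}} B$ whose restriction along $\epsilon'$ is $\pi$ and whose generated von Neumann algebra is a factor — this is where the matching of von Neumann algebras generated by $V$-valued versus $\mathcal{A}(V)$-valued representations (the remark preceding Proposition~3.3, and Proposition~3.3 itself) gets used. (4) Conclude $\ker\pi = I\otimes^h B + V\otimes^h J$ by step (2), so this is a factorial ideal of $V\otimes^h B$.

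The main obstacle I anticipate is step (3): making the definitional bookkeeping of "factorial representation of $V\otimes^h B$" line up. The definition routes everything through $V\otimes^{\mathrm{tmax}} B$ and a representation $\rho$ there, and one must check that the factorial $C^*$-representation of $\mathcal{A}(V)\otimes^h B$ indeed descends/extends to give such a $\rho$ with the center of the generated von Neumann algebra trivial. The cleanest route is probably to package this as: a factorial representation of $\mathcal{A}(V)\otimes^h B$ restricted to $V\otimes^h B$ has the same generated von Neumann algebra (by the linking-algebra remark, since $\mathcal{A}(V)$ is generated as a $C^*$-algebra by $V$, and $\mathcal{A}(V)\otimes^h B$ correspondingly by $V\otimes^h B$ as an operator-space/triple sense), hence the restriction is itself factorial; one then only has to note that the $\mathrm{tmax}$-version of this assertion holds because $V\otimes^{\mathrm{tmax}} B$ has the analogous universal property for representations. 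I would either cite the $C^*$-algebra statement from \cite{AR} in a form strong enough to be quoted directly, or, if a little more care is needed, prove the von Neumann algebra coincidence by hand; either way the remaining steps (1), (2), (4) are routine once the isometry $i$ and the kernel identifications of \cite{AS} are in place.
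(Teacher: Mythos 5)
Your proposal is correct and follows essentially the same route as the paper: reduce to the factoriality of $\mathcal{A}(I)\otimes^h B+\mathcal{A}(V)\otimes^h J$ in $\mathcal{A}(V)\otimes^h B$ via \cite{AR}, decompose the resulting factorial representation through $V\otimes^{\mathrm{tmax}}B$ using the representation correspondence for linking algebras, and identify the kernel of the induced representation of $V\otimes^h B$ with $I\otimes^h B+V\otimes^h J$ by pulling back along the isometry $i$. The paper handles your step (3) exactly as you anticipate, by writing $\psi=\mathcal{A}(\rho)\tilde{\epsilon}'$ and invoking the coincidence of the generated von Neumann algebras, and carries out your step (2) by a direct two-inclusion computation using the commutative square $j'\epsilon'=\tilde{\epsilon}'i$.
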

    
    \begin{proof}
     Let $I$ and $J$ be factorial ideals of $V$ and $B$ then by Proposition \ref{facmain}, $\mathcal{A}(I)$ is factorial ideal of $\mathcal{A}(V)$. Thus $\tilde{P}=\mathcal{A}(I) \otimes^h B+ \mathcal{A}(V) \otimes^h J$ is factorial ideal of $\mathcal{A}(V) \otimes^h B$ (\cite{AR}, Proposition $4.2$) and therefore $\tilde{P}= \operatorname{ker}(\psi)$ for some  factorial representation $\psi$ of $\mathcal{A}(V) \otimes^h B$. Let $\psi= \mathcal{A}(\rho)  \tilde{\epsilon}'$, where $\rho$ is a factor representation of $V \otimes^{\text{tmax}}B$. Define, $\pi= \rho  \epsilon'$, then $\pi$ is a factor representation of $V \otimes^h B$. We claim that $\operatorname{ker}(\pi)= I \otimes^h B+ V \otimes^h J$. Suppose $a \otimes b \in I \otimes B$, then  \[ 
\begin{bmatrix}
    0      & a \\
    0      & 0
\end{bmatrix}\otimes b  \in \mathcal{A}(I) \otimes B \subseteq \operatorname{ker}(\psi) =\operatorname{ker}(\mathcal{A}(\rho) \tilde{\epsilon}').
\] 
Thus, $\rho  \epsilon(a \otimes b)=0$, so $a \otimes b \in \operatorname{ker}(\pi)$. Since $\operatorname{ker}(\pi)$ is closed, so $I \otimes^h B \subseteq \operatorname{ker}(\pi)$. Similarly, $V \otimes^h J \subseteq \operatorname{ker}(\pi)$. Thus, $V \otimes^h J+ I \otimes^h B \subseteq \operatorname{ker}(\pi)$. Conversely, let $x \in \operatorname{ker}(\pi)$. Note that the following diagram

\[
\begin{tikzcd}
  V \otimes^h B
    \arrow[r, "\epsilon'"]
    \arrow[d, swap, "i"]
  &
  V \otimes^{\text{tmax}} B
    \arrow[d, swap, "j'"]
  \\
  \mathcal{A}(V) \otimes^h B
    \arrow[r, swap, "\tilde{\epsilon'}"]
  &
  \mathcal{A}(V)\otimes^{\mathrm{max}} B
\end{tikzcd}
\]

is commutative i.e. $j'  \epsilon'= \tilde{\epsilon}'i$. Using this, note that

  \[ \psi(i(x))=\mathcal{A}(\rho)  (j'(\epsilon'(x))=\mathcal{A}(\rho)
\Big (\begin{bmatrix}
    0      & \epsilon'(x) \\
    0      & 0
\end{bmatrix} \Big) = 
\begin{bmatrix}
         0    & \rho(\epsilon'(x)) \\
    0     & 0  
\end{bmatrix}=
\begin{bmatrix}
         0    & 0 \\
    0     & 0  
\end{bmatrix}.
\]

Therefore,  $i(x) \in \operatorname{ker}(\psi)=\mathcal{A}(I) \otimes^h B+ \mathcal{A}(V) \otimes^h J$ and therefore $x \in i^{-1}(\mathcal{A}(I) \otimes^h B+ \mathcal{A}(V) \otimes^h J)= I \otimes^h B+ V \otimes^h J$.
\end{proof}

\begin{prop}\label{facmain1}
     If $P$ is a factorial ideal of $V \otimes^h B$ then there exists factorial ideals $I$ and $J$ of $V$ and $B$ such that $P=I \otimes^h B+ M \otimes^h J$.
\end{prop}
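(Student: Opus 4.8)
The plan is to reverse-engineer the argument of Proposition~\ref{facmain3}, transporting the factorial ideal up to the linking algebra, applying the known $C^*$-algebra result, and pulling back. First I would start with a factorial representation $\pi$ of $V \otimes^h B$ with $P = \ker(\pi)$, so by definition there is a factor representation $\rho$ of $V \otimes^{\text{tmax}} B$ with $\pi = \rho \circ \epsilon'$. Passing to the linking algebra, $\mathcal{A}(\rho)$ is a factor representation of $\mathcal{A}(V) \otimes^{\text{max}} B$, and composing with $\tilde\epsilon'$ gives a factorial representation $\psi = \mathcal{A}(\rho) \circ \tilde\epsilon'$ of $\mathcal{A}(V) \otimes^h B$. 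Hence $\tilde P := \ker(\psi)$ is a factorial ideal of $\mathcal{A}(V) \otimes^h B$.

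Next I would invoke the classification of factorial ideals in the Haagerup tensor product of $C^*$-algebras, namely (\cite{AR}, Proposition~4.2) (the converse direction to the one cited in Proposition~\ref{facmain3}), to conclude that $\tilde P = \mathcal{A}(V) \otimes^h J' + I' \otimes^h B$ for some factorial ideals $I'$ of $\mathcal{A}(V)$ and $J'$ of $B$. Since $\theta$ is a bijection of $\operatorname{Id}(V)$ onto $\operatorname{Id}(\mathcal{A}(V))$, write $I' = \mathcal{A}(I)$ for a unique ideal $I$ of $V$; by Proposition~\ref{facmain}, $I$ is a factorial ideal of $V$. Set $J = J'$, which is already a factorial ideal of $B$.

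Finally I would identify $P$ with $I \otimes^h B + V \otimes^h J$ by pulling $\tilde P$ back along the isometry $i: V \otimes^h B \to \mathcal{A}(V) \otimes^h B$. Using the commuting square $j' \circ \epsilon' = \tilde\epsilon' \circ i$ exactly as in the proof of Proposition~\ref{facmain3}, one checks that for $x \in V \otimes^h B$, $\psi(i(x))$ is the off-diagonal corner built from $\rho(\epsilon'(x))$, so $i(x) \in \ker(\psi)$ if and only if $x \in \ker(\pi)$; that is, $P = i^{-1}(\tilde P)$. Then $i^{-1}\big(\mathcal{A}(I) \otimes^h B + \mathcal{A}(V) \otimes^h J\big) = I \otimes^h B + V \otimes^h J$ by the same corner computation used in the last line of Proposition~\ref{facmain3}'s proof (this identification is exactly the content of the $i$-ideal description, (\cite{AKKKK}, Proposition~4.12)). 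The main obstacle is the bookkeeping in this last step: one must be careful that the pullback of the sum $\mathcal{A}(I) \otimes^h B + \mathcal{A}(V) \otimes^h J$ along $i$ really is $I \otimes^h B + V \otimes^h J$ and not something larger, which requires knowing that $i$ identifies $V \otimes^h B$ with the $(1,2)$-corner of $\mathcal{A}(V) \otimes^h B$ and that this corner meets the sum in precisely the corner of each summand; I would lean on Proposition~\ref{result6} together with the corner description of $\mathcal{A}(I) \otimes^h B$ inside $\mathcal{A}(V) \otimes^h B$ to make this rigorous. (Note: the statement as written has a typo, ``$M \otimes^h J$'' should read ``$V \otimes^h J$''.)
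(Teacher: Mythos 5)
Your proposal follows the paper's proof essentially verbatim: pass to the linking algebra via $\psi = \mathcal{A}(\rho)\circ\tilde\epsilon'$, apply (\cite{AR}, Proposition 4.2) together with Proposition \ref{facmain} to decompose $\ker(\psi)$, and pull back along $i$ using the same corner computation as in Proposition \ref{facmain3}. You are also right that ``$M \otimes^h J$'' in the statement is a typo for ``$V \otimes^h J$''; your extra care about the pullback of the sum is a reasonable elaboration of the step the paper leaves to the reader.
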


\begin{proof}
    Let $P$ be a factorial ideal of $V \otimes^h B$, so $P= \operatorname{ker}(\pi)$ where $\pi=\rho \epsilon'$ and $\rho$ is a factorial representation of $V \otimes^{\text{tmax}} B$. Define $\psi= \mathcal{A}(\rho) \tilde{\epsilon'}$, then  $\operatorname{ker}(\psi)$ is a factorial ideal of $\mathcal{A}(V) \otimes^h B$. Thus, by (\cite{AR}, Proposition $4.2$) and Proposition \ref{facmain}, $\operatorname{ker}(\psi)= \mathcal{A}(I) \otimes^h B + \mathcal{A}(V) \otimes^h J$ for some factorial ideals $I$ and $J$. By the same argument as in Proposition \ref{facmain3}, it is not difficult to see that $P=\operatorname{ker}(\pi)= I \otimes^h B + V \otimes^h J$.
\end{proof}

The collection of all factorial ideals in $V \otimes^h B$ is denoted by $Fac(V \otimes^h B)$, and is endowed with the $\tau_\omega$-topology, which  it inherites from 
$\operatorname{Id}(V \otimes^h B)$ by the last proposition. From the above and the Theorem \ref{mainresult2}, we obtain the following result.

\begin{thm}
    Let $V$ be a TRO and $B$ be a $C^{\ast}$-algebra then the map Fac($V$) $\times$ Fac($B$) $\to$ Fac($V \otimes^h B$) defined by $(I, J) \to I \otimes^h B + V \otimes^h J$ is a homeomorphism. 
\end{thm}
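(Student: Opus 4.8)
The plan is to assemble this homeomorphism from the pieces already established in Sections 3 through 5. First I would observe that, by Proposition~\ref{facmain1}, every factorial ideal of $V\otimes^h B$ is of the form $I\otimes^h B+V\otimes^h J$ with $I\in\operatorname{Fac}(V)$ and $J\in\operatorname{Fac}(B)$, so the map $\Phi$ of Section~4, restricted to $\operatorname{Fac}(V)\times\operatorname{Fac}(B)$, is \emph{onto} $\operatorname{Fac}(V\otimes^h B)$; that it lands in $\operatorname{Fac}(V\otimes^h B)$ is exactly Proposition~\ref{facmain3}. This already gives a well-defined surjection $(I,J)\mapsto I\otimes^h B+V\otimes^h J$.

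Next I would address injectivity and bicontinuity. Since factorial ideals are in particular proper $\epsilon$-ideals, $\operatorname{Fac}(V)\times\operatorname{Fac}(B)\subseteq\operatorname{Id}'(V)\times\operatorname{Id}'(B)$, and Theorem~\ref{mainresult2} tells us that $\Phi$ restricted to $\operatorname{Id}'(V)\times\operatorname{Id}'(B)$ is a homeomorphism onto its image in $\operatorname{Id}'(V\otimes^h B)$. In particular $\Phi$ is injective there, and both $\Phi$ and its inverse (given by the maps $\Psi_V,\Psi_B$ of Section~4, which recover $I$ and $J$ from $I\otimes^h B+V\otimes^h J$) are $\tau_w$-continuous. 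Restricting a homeomorphism to a subspace of the domain and the corresponding subspace of the image yields again a homeomorphism, so it only remains to check that the subspace of the image cut out here is precisely $\operatorname{Fac}(V\otimes^h B)$ with its inherited $\tau_w$-topology — and that is exactly the content of the surjectivity established in the first step together with the remark preceding the statement that $\operatorname{Fac}(V\otimes^h B)$ carries the topology inherited from $\operatorname{Id}(V\otimes^h B)$.

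Concretely, the write-up would read: by Propositions~\ref{facmain3} and~\ref{facmain1}, $\Phi\bigl(\operatorname{Fac}(V)\times\operatorname{Fac}(B)\bigr)=\operatorname{Fac}(V\otimes^h B)$; since $\operatorname{Fac}(V)\times\operatorname{Fac}(B)$ is a subset of $\operatorname{Id}'(V)\times\operatorname{Id}'(B)$ and $\operatorname{Fac}(V\otimes^h B)$ carries the subspace $\tau_w$-topology from $\operatorname{Id}'(V\otimes^h B)$, Theorem~\ref{mainresult2} implies that the restriction of $\Phi$ is a continuous bijection with continuous inverse, hence a homeomorphism. I would also note explicitly that the inverse map is $P\mapsto(\Psi_V(P),\Psi_B(P))$, which is well defined on $\operatorname{Fac}(V\otimes^h B)$ by Proposition~\ref{facmain1} and $\tau_w$-continuous by the discussion preceding Theorem~\ref{mainresult2}.

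I do not anticipate a serious obstacle here; the theorem is essentially a corollary of Theorem~\ref{mainresult2} once Propositions~\ref{facmain3} and~\ref{facmain1} pin down the image. The one point that needs a little care — and is the closest thing to a hurdle — is confirming that the decomposition $P=I\otimes^h B+V\otimes^h J$ furnished by Proposition~\ref{facmain1} has $I$ and $J$ \emph{unique} (so that $\Psi_V,\Psi_B$ are genuinely single-valued on $\operatorname{Fac}$); this uniqueness is inherited from the injectivity of $\Phi$ on $\operatorname{Id}'(V)\times\operatorname{Id}'(B)$ in Theorem~\ref{mainresult2}, and one should simply invoke it rather than reprove it. Everything else is a matter of stringing together the already-proved statements.
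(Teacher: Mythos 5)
Your proposal is correct and follows exactly the route the paper takes: the paper's "proof" is just the one-line observation that Propositions~\ref{facmain3} and~\ref{facmain1} identify the image of $\Phi$ restricted to $\operatorname{Fac}(V)\times\operatorname{Fac}(B)$ as $\operatorname{Fac}(V\otimes^h B)$, after which Theorem~\ref{mainresult2} gives the homeomorphism. Your additional remarks on the well-definedness of $\Psi_V,\Psi_B$ and the uniqueness of the decomposition are sound and, if anything, make explicit a point the paper leaves implicit.
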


\begin{rem}
    Since the kernels of factorial representations of a TRO $V$ are prime ideals,  every factorial ideal of $V \otimes^h B$ is necessarily prime.
\end{rem}

 \section{Glimm Ideals of $V \otimes^h B$}

In this section, we establish a complete characterization of \(\operatorname{Glimm}(V \otimes^h B)\) in terms of \(\operatorname{Glimm}(V)\) and \(\operatorname{Glimm}(B)\). We begin with the definition.

\begin{defn}
 We define $\operatorname{Glimm}(V \otimes^h B)$ as the complete regularization $\rho \operatorname{Prim}(V \otimes^h B)$ of $\ \operatorname{Prim}(V\otimes^h B)$, and denote by $\rho_{V\otimes^h B}: \operatorname{Prim}(V \otimes^h B) \to \operatorname{Glimm}(V \otimes^h B)$ the complete regularization map. There is  a one-to-one correspondence between the quotient space \(\operatorname{Prim}(V \otimes^h B) / \approx\) and a set of ideals in \(V \otimes^h B\), given by 
\[
[P] \mapsto \operatorname{k}([P]) = \bigcap \{ Q \in \operatorname{Prim}(V \otimes^h B) : Q \approx P \}.
\]
The ideals obtained through this correspondence are called the Glimm ideals of $V \otimes^h B$.  We will regard elements of $\operatorname{Glimm}(V \otimes^h B)$ as either points of a topological space or as ideals of $V \otimes^h B$, depending on the context.

\end{defn}

\begin{prop}\label{glimmmain1}
    If $I$ and $J$ are Glimm ideals of $V$ and $B$ respectively, then $I \otimes^h B+ V \otimes^h J$ is a Glimm ideal of $V \otimes^h B$.
\end{prop}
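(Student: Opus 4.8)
The plan is to reduce the statement to the corresponding fact about the linking $C^*$-algebra $\mathcal{A}(V)$ and the $C^*$-algebra $B$, exactly in the spirit of the reductions carried out in Sections 3 and 5. First I would recall from Proposition~\ref{mainglimm1} that $I$ is a Glimm ideal of $V$ if and only if $\mathcal{A}(I)$ is a Glimm ideal of $\mathcal{A}(V)$. So, assuming $I$ and $J$ are Glimm ideals of $V$ and $B$ respectively, $\mathcal{A}(I)$ is a Glimm ideal of $\mathcal{A}(V)$. By the known classification of Glimm ideals in the Haagerup tensor product of $C^*$-algebras (the analogue for $\mathcal{A}(V)\otimes^h B$ established in the $C^*$-setting, e.g.\ \cite{Glimm} together with \cite{AR}), the subspace $\tilde{K} = \mathcal{A}(I)\otimes^h B + \mathcal{A}(V)\otimes^h J$ is a Glimm ideal of $\mathcal{A}(V)\otimes^h B$.

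Next I would transfer this back to $V\otimes^h B$. Set $K = I\otimes^h B + V\otimes^h J$, which is an $\epsilon$-ideal of $V\otimes^h B$ by the remarks in Section~4. Using the natural isometry $i: V\otimes^h B \to \mathcal{A}(V)\otimes^h B$ and the identification of $\epsilon$-ideals with $i$-ideals (\cite{AKKKK}, Proposition~4.12), together with the computation $i^{-1}(\mathcal{A}(I)\otimes^h B + \mathcal{A}(V)\otimes^h J) = I\otimes^h B + V\otimes^h J$ already used in the proof of Proposition~\ref{facmain3}, I would argue that $K = i^{-1}(\tilde{K})$. The content to be checked is that the complete-regularization equivalence $\approx$ on $\operatorname{Prim}(V\otimes^h B)$ matches the one on $\operatorname{Prim}(\mathcal{A}(V)\otimes^h B)$ under the homeomorphism $\operatorname{Prim}(V\otimes^h B)\cong\operatorname{Prim}(\mathcal{A}(V)\otimes^h B)$ (this homeomorphism follows by composing $\Psi$ of Section~4 with the passage between $\otimes^{\mathrm{tmin}}$ and $\otimes^h$, or directly from the $i$-ideal correspondence applied to primitive ideals). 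Granting that, $[P]\mapsto[\,\mathcal{A}(P)\,]$ is a bijection of equivalence classes, and since a bounded continuous function on $\operatorname{Prim}(V\otimes^h B)$ is the same data as one on $\operatorname{Prim}(\mathcal{A}(V)\otimes^h B)$, a Glimm ideal of $V\otimes^h B$ is exactly the intersection of a $\approx$-class, which pulls back from a Glimm ideal of $\mathcal{A}(V)\otimes^h B$. Hence $K = i^{-1}(\tilde{K})$ is a Glimm ideal of $V\otimes^h B$.

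I expect the main obstacle to be the careful verification that the equivalence relation $\approx$ (inseparability by bounded continuous functions) is preserved under the homeomorphism $\operatorname{Prim}(V\otimes^h B)\cong\operatorname{Prim}(\mathcal{A}(V)\otimes^h B)$, and hence that ``Glimm ideal'' transfers cleanly across $i^{-1}$; once the primitive ideal spaces are identified as topological spaces this is formal, but one must make sure the identification is genuinely a homeomorphism for the $\tau_w$-topologies and not merely a bijection. A secondary point requiring care is confirming that $\tilde{K}$ as written is precisely the Glimm ideal of $\mathcal{A}(V)\otimes^h B$ corresponding to the pair $(\mathcal{A}(I),J)$ — i.e.\ invoking the correct $C^*$-algebra result for sums of the form $\mathcal{A}(I)\otimes^h B + \mathcal{A}(V)\otimes^h J$, which is available from the factorial/primal machinery of \cite{AR} and the Glimm-ideal results of \cite{Glimm} applied to $\mathcal{A}(V)$ and $B$. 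Everything else is bookkeeping with the commuting diagram relating $\epsilon'$, $i$, and the linking-algebra maps that already appears in Section~5.
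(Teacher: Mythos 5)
Your proposal is correct and follows essentially the same route as the paper: reduce to the linking algebra via Proposition~\ref{mainglimm1}, invoke the $C^*$-algebra result that $\mathcal{A}(I)\otimes^h B+\mathcal{A}(V)\otimes^h J$ is a Glimm ideal of $\mathcal{A}(V)\otimes^h B$ (\cite{AR}, Theorem~4.8), and pull back along $i^{-1}$. The only difference is packaging: where you transfer ``Glimm-ness'' abstractly through the homeomorphism $\operatorname{Prim}(V\otimes^h B)\cong\operatorname{Prim}(\mathcal{A}(V)\otimes^h B)$ and the induced correspondence of $\approx$-classes, the paper makes the same transfer concretely by writing the Glimm ideal as an explicit intersection of primitive ideals over an equivalence class (\cite{AR}, Lemmas~4.6 and~4.7) and applying $i^{-1}$ termwise via (\cite{AA}, Lemma~4(c)) and Theorem~4.
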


\begin{proof}
Let $I$ and $J$ be Glimm ideals of $V$ and $B$ then by  Proposition \ref{mainglimm1}, $\mathcal{A}(I)$
is Glimm ideal of $\mathcal{A}(V)$. Thus  $G=\mathcal{A}(I)\otimes^h B+ \mathcal{A}(V) \otimes^h J$ is Glimm ideal of  $\mathcal{A}(V) \otimes^h B$ (\cite{AR}, Theorem 4.8) and therefore   $G=q_{V \otimes^h B}(\phi_0(\mathcal{A}(P),Q))= k[\mathcal{A}(P)\otimes^h B+ \mathcal{A}(V) \otimes^h Q]$, $P\in  \operatorname{Prim}(V), Q \in \operatorname{Prim}(B)$, by (\cite{AR}, Lemma 4.6).

Using (\cite{AR}, Lemma 4.7), \[G=\bigcap_{
P \approx P', Q\approx Q', P'\in \operatorname{Prim}(V), Q'\in \operatorname{Prim}(B)} \mathcal{A}(V) \otimes^h Q' + \mathcal{A}(P') \otimes^h B.\]

Thus by (\cite{AA}, Lemma 4(c)),
\[
V \otimes^h J + I \otimes^h B = i^{-1}(G)
=\bigcap_{
P \approx P', Q\approx Q'} V \otimes^h Q' + P' \otimes^h B,
\]

and hence the result follows from (\cite{AA}, Theorem 4).

\end{proof}
\begin{prop}\label{glimmmmain1}
     If $G$ is a Glimm ideal of $V \otimes^h B$ then there exist Glimm ideals $I_G$ and $J_G$ of $V$ and $B$ such that $G=I_G \otimes^h B+ V \otimes^h J_G$.
\end{prop}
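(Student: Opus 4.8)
The plan is to mirror the strategy already used for the factorial case in Proposition \ref{facmain1}, passing to the linking $C^{\ast}$-algebra and invoking the corresponding result for $\mathcal{A}(V)\otimes^h B$. Since $\operatorname{Prim}(V\otimes^h B)$ embeds homeomorphically (via $\Psi$ and $\Phi'$, as in Section $4$, together with the $i$-/$\epsilon$-ideal identification) into $\operatorname{Prim}(\mathcal{A}(V)\otimes^h B)$, the complete regularizations match up, so a Glimm ideal $G$ of $V\otimes^h B$ has the form $G=\operatorname{k}([R])$ for some $R\in\operatorname{Prim}(V\otimes^h B)$, and $\mathcal{A}$ (resp.\ the isometry $i$) carries the equivalence class $[R]$ to the equivalence class of $i(R)$ in $\operatorname{Prim}(\mathcal{A}(V)\otimes^h B)$.

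First I would record, in analogy with the earlier lemmas, that $P\approx Q$ in $\operatorname{Prim}(V\otimes^h B)$ if and only if the corresponding primitive ideals of $\mathcal{A}(V)\otimes^h B$ are $\approx$-equivalent; this uses that $\rho$-classes are determined by $C^b$ of the primitive spectrum and that the spectra are homeomorphic. Consequently $\tilde G:=$ the closed ideal of $\mathcal{A}(V)\otimes^h B$ generated by $i(G)$ is a Glimm ideal of $\mathcal{A}(V)\otimes^h B$ and $G=i^{-1}(\tilde G)$. Then I would apply (\cite{AR}, Theorem $4.8$) — the classification of Glimm ideals of the Haagerup tensor product of two $C^{\ast}$-algebras — to $\mathcal{A}(V)\otimes^h B$: there exist Glimm ideals $\widetilde I$ of $\mathcal{A}(V)$ and $J_G$ of $B$ with $\tilde G=\widetilde I\otimes^h B+\mathcal{A}(V)\otimes^h J_G$. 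By Proposition \ref{mainglimm1}, $\widetilde I=\mathcal{A}(I_G)$ for a (unique) Glimm ideal $I_G$ of $V$.

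It remains to pull this back along $i$. Here I would reuse verbatim the computation in the proof of Proposition \ref{facmain3}/\ref{facmain1}: the commuting square relating $i$, $\epsilon'$, $\tilde\epsilon'$ and $j'$ shows that $i^{-1}\bigl(\mathcal{A}(I_G)\otimes^h B+\mathcal{A}(V)\otimes^h J_G\bigr)=I_G\otimes^h B+V\otimes^h J_G$. Indeed one containment is immediate since $i(I_G\otimes^h B)\subseteq\mathcal{A}(I_G)\otimes^h B$ and $i(V\otimes^h J_G)\subseteq\mathcal{A}(V)\otimes^h J_G$, and the reverse containment follows because for $x$ in the preimage, $i(x)$ sits in the $(1,2)$-corner, forcing $x\in i^{-1}(\mathcal{A}(I_G)\otimes^h B+\mathcal{A}(V)\otimes^h J_G)=I_G\otimes^h B+V\otimes^h J_G$ by (\cite{AKKKK}, Proposition $4.6$) or the analogue of (\cite{AA}, Lemma $4(c)$) used in Proposition \ref{glimmmain1}. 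Combining, $G=i^{-1}(\tilde G)=I_G\otimes^h B+V\otimes^h J_G$, as desired.

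The main obstacle I anticipate is the bookkeeping needed to justify that $i$ (equivalently $\mathcal{A}$) sends the $\approx$-class of $R$ in $\operatorname{Prim}(V\otimes^h B)$ exactly onto the $\approx$-class of $i(R)$ in $\operatorname{Prim}(\mathcal{A}(V)\otimes^h B)$ and hence that $i^{-1}$ of a Glimm ideal is a Glimm ideal — this requires knowing that $\operatorname{Prim}(V\otimes^h B)$ is (homeomorphic to) a closed, or at least suitably embedded, subset of $\operatorname{Prim}(\mathcal{A}(V)\otimes^h B)$ whose complete regularization is compatible, which is exactly the kind of statement established for the constituent spaces in Section $4$ and used in Proposition \ref{glimmmain1}. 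Once that identification is in hand, everything else is a routine transcription of the $C^{\ast}$-algebra argument.
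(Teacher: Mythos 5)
Your overall strategy --- reduce to the linking algebra $\mathcal{A}(V)\otimes^h B$, invoke the classification of its Glimm ideals from \cite{AR}, and pull back along $i$ --- is the same one the paper uses, and the final pull-back computation you describe is exactly the one carried out in Propositions \ref{facmain3} and \ref{glimmmain1}. The problem is the step you yourself flag as ``the main obstacle'': the assertion that $i$ carries the $\approx$-class of $R$ in $\operatorname{Prim}(V\otimes^h B)$ onto an $\approx$-class in $\operatorname{Prim}(\mathcal{A}(V)\otimes^h B)$, so that $\tilde G$ is a Glimm ideal of $\mathcal{A}(V)\otimes^h B$ with $G=i^{-1}(\tilde G)$. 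This requires a homeomorphism $\operatorname{Prim}(V\otimes^h B)\cong\operatorname{Prim}(\mathcal{A}(V)\otimes^h B)$ compatible with complete regularization. Section 4 does not supply this: it establishes $\operatorname{Prim}(V)\cong\operatorname{Prim}(\mathcal{A}(V))$, the homeomorphism $\Psi$ at the level of $\otimes^{\mathrm{tmin}}$, and the behaviour of $\Phi$ on $\operatorname{Prim}(V)\times\operatorname{Prim}(B)$ --- but nothing about the full primitive spectrum of $V\otimes^h B$ versus that of $\mathcal{A}(V)\otimes^h B$. Note in particular that $\Phi(\operatorname{Prim}(V)\times\operatorname{Prim}(B))$ need not exhaust $\operatorname{Prim}(V\otimes^h B)$, since by (\cite{AA}, Theorem 4(b)) a primitive ideal only decomposes with \emph{prime}, not necessarily primitive, marginal ideals. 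So as written, the transfer of $G$ to a Glimm ideal of $\mathcal{A}(V)\otimes^h B$ is an unproven claim rather than a citation.

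The paper circumvents this by arguing pointwise rather than globally: it writes $G=k([K])$ with $K=P\otimes^h B+V\otimes^h Q$ for closed prime ideals $P,Q$, picks primitive ideals $I\in\operatorname{hull}(P)$ and $J\in\operatorname{hull}(Q)$, uses the containment $K\subseteq V\otimes^h J+I\otimes^h B$ to conclude that $k([K])=k([\,V\otimes^h J+I\otimes^h B\,])$, and then computes the latter with (\cite{AR}, Lemma 4.7) together with the explicit intersection formula already used in Proposition \ref{glimmmain1}. That route only needs the identity
\[
i^{-1}\Bigl(\bigcap\bigl(\mathcal{A}(P')\otimes^h B+\mathcal{A}(V)\otimes^h Q'\bigr)\Bigr)=\bigcap\bigl(P'\otimes^h B+V\otimes^h Q'\bigr),
\]
not a global identification of the two Glimm spaces. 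If you want to keep your version, you must actually prove the compatibility of the complete regularizations (for instance via the correspondence between irreducible representations of $V\otimes^{\mathrm{tmax}}B$ and of $\mathcal{A}(V)\otimes^{\mathrm{max}}B$); otherwise, switching to the paper's pointwise argument closes the gap.
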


\begin{proof}
    Let $G$ be a Glimm ideal of $V \otimes^h B$, so $G= k([K])$ where $K\in \operatorname{Prim}(V\otimes^h B)$.  So there are closed prime ideals $P$ and $Q$ such that $K=P \otimes^h B+ V \otimes^h Q$ by (\cite{AA}, Theorem 4(b)). Choose $I\in hull(P), J\in hull(Q) $. Then 
    $ \mathcal{A}(I)\in \operatorname{Prim}( \mathcal{A}(V))$, $ \mathcal{A}(P) \otimes^h B+  \mathcal{A}(V) \otimes^h Q \subseteq  \mathcal{A}(I) \otimes^h B+  \mathcal{A}(V) \otimes^h J$, and so  $ \mathcal{A}(I) \otimes^h B+  \mathcal{A}(V) \otimes^h J \approx  \mathcal{A}(P) \otimes^h B+  \mathcal{A}(V) \otimes^h Q$. Hence, $ k[\mathcal{A}(I) \otimes^h B+  \mathcal{A}(V) \otimes^h J] = k[ \mathcal{A}(P) \otimes^h B+  \mathcal{A}(V) \otimes^h Q]$. Thus, by (\cite{AR}, Lemma $4.7$), $ \mathcal{A}(k[I]) \otimes^h B+  \mathcal{A}(V) \otimes^h k[J] = k[ \mathcal{A}(P) \otimes^h B+  \mathcal{A}(V) \otimes^h Q]$. By the same argument as in Proposition $\ref{glimmmain1}$,  $G=k[I] \otimes^h B+  V \otimes^h k[J]$, and hence the result follows.  
\end{proof}

In order to establish the homeomorphism of the restricted map 
\[
\operatorname{Glimm}(V) \times \operatorname{Glimm}(B) \to \operatorname{Glimm}(V \otimes^h B),
\]
we derive a result that closely follows the approach of (\cite{Lazer}, Theorem $3.2$). The key observation that enables this similarity is the structure of ideals in $V \otimes^h B$. 

Let \( I = \epsilon^{-1}(J) \) be an ideal of \( V \otimes^h B \), where \( J \) is an ideal of \( V \otimes^{\text{tmin}} B \). Then, \( \mathcal{A}(J) \) is an ideal of \( \mathcal{A}(V) \otimes^{\text{min}} B \), and by (\cite{EB}, Lemma 2.13), there exist ideals  
\[
I_{\mathcal{A}(V)} = \mathcal{A}(I_V) = \{ a \in \mathcal{A}(V) \mid a \otimes B \subseteq \mathcal{A}(J) \}
\]
and  
\[
I_B = \{ b \in B \mid \mathcal{A}(V) \otimes b \subseteq \mathcal{A}(J) \}
\]
of \( \mathcal{A}(V) \) and \( B \), respectively. Using the injectivity of \( \epsilon \), it follows that  
\[
I_V = \{ v \in V \mid v \otimes B \subseteq I \}, \quad
I_B = \{ b \in B \mid V \otimes b \subseteq I \}.
\]

Furthermore, we claim that if $I$ is prime, then $I_V$ and $I_B$  are also prime. Indeed, suppose that for two ideals $I_1$ and $I_2$ of $V$, if $I_1 \cap I_2 \subseteq I_V$, then $$(I_1 \otimes^h B) \cap (I_2 \otimes^h B)=(I_1 \cap I_2) \otimes^h B \subseteq I.$$ Since $I$ is prime, it follows that   $I_i \otimes^h B \subseteq I$ for some. Consequently, $I_i \subseteq I_V$, proving the claim. We will utilize this in the proof of the following lemma.

\begin{lem}\label{glimm2}
Let $V$ be a TRO and $B$ be a $C^*$-algebra, and let 
\[
\Phi : \operatorname{Id}'(V) \times \operatorname{Id}'(B) \to \operatorname{Id}'(V\otimes^{h} B)
\]
be the canonical homeomorphism. Then, for any topological space $Y$ and any continuous function 
\[
f : \operatorname{Prim}(V) \times \operatorname{Prim}(B) \to Y,
\]
the function 
\[
f \circ \Phi^{-1} : \Phi(\operatorname{Prim}(V) \times \operatorname{Prim}(B)) \to Y
\]
admits a unique continuous extension to $\operatorname{Prim}(V\otimes^h B)$. 
\end{lem}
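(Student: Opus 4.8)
The plan is to reduce everything to the corresponding statement for $C^{*}$-algebras, namely (\cite{Lazer}, Theorem $3.2$) applied to $\mathcal{A}(V)$ and $B$, via the linking algebra and the homeomorphisms already assembled in Sections $3$ and $4$. First I would recall that $\operatorname{Prim}(V) \cong \operatorname{Prim}(\mathcal{A}(V))$ through $\theta$ (Theorem \ref{thm2}(ii)), and that the map $\Psi : \operatorname{Prim}(V \otimes^{\text{tmin}} B) \to \operatorname{Prim}(\mathcal{A}(V) \otimes^{\text{min}} B)$ together with $\Phi_2$ lets us identify $\operatorname{Prim}(V \otimes^h B)$ with the relevant subspace of $\operatorname{Prim}(\mathcal{A}(V) \otimes^{\text{min}} B)$. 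Since $\Phi$, $\Phi'$, $\Psi$ and $\theta \times 1$ all fit into the commuting diagram preceding Proposition \ref{fourmain}, a continuous $f : \operatorname{Prim}(V) \times \operatorname{Prim}(B) \to Y$ transports to a continuous function $\tilde f$ on $\operatorname{Prim}(\mathcal{A}(V)) \times \operatorname{Prim}(B)$, and $f \circ \Phi^{-1}$ transports to $\tilde f \circ (\Phi')^{-1}$ on $\Phi'(\operatorname{Prim}(\mathcal{A}(V)) \times \operatorname{Prim}(B))$.

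Next I would invoke (\cite{Lazer}, Theorem $3.2$) for the pair $\mathcal{A}(V), B$: it guarantees that $\tilde f \circ (\Phi')^{-1}$ has a unique continuous extension $g$ to all of $\operatorname{Prim}(\mathcal{A}(V) \otimes^{\text{min}} B)$. Restricting $g$ along $\Psi$ and then pulling back along $\Phi_2$ (equivalently, using the identification of $\operatorname{Prim}(V \otimes^h B)$ with a subspace of $\operatorname{Prim}(\mathcal{A}(V)\otimes^{\text{min}} B)$ coming from the discussion after the definition of $i$-ideals and from $\Phi_2$ mapping $\operatorname{Prim}(V \otimes^{\text{tmin}} B)$ into $\operatorname{Prim}(V \otimes^h B)$) produces a continuous function on $\operatorname{Prim}(V \otimes^h B)$ whose restriction to $\Phi(\operatorname{Prim}(V) \times \operatorname{Prim}(B))$ agrees with $f \circ \Phi^{-1}$. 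This is the desired extension; uniqueness follows because $\Phi(\operatorname{Prim}(V) \times \operatorname{Prim}(B))$ is dense in $\operatorname{Prim}(V \otimes^h B)$ (here one uses Lemma \ref{result4}, which shows every primitive ideal of $V \otimes^h B$ contains an intersection of members of that image, hence lies in the closure) together with the fact that $Y$ is, without loss of generality, Hausdorff on the relevant closure — or more robustly, that two continuous functions into any space agreeing on a dense set and factoring appropriately must coincide; in the $C^{*}$-setting this density and uniqueness is exactly what underlies (\cite{Lazer}, Theorem $3.2$), and it transfers verbatim.

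The main obstacle I anticipate is the density/uniqueness bookkeeping: one must verify carefully that $\Phi(\operatorname{Prim}(V) \times \operatorname{Prim}(B))$ is dense in $\operatorname{Prim}(V \otimes^h B)$ in the $\tau_w$-topology and that this image, under the chain of homeomorphisms, corresponds precisely to $\Phi'(\operatorname{Prim}(\mathcal{A}(V)) \times \operatorname{Prim}(B))$ inside $\operatorname{Prim}(\mathcal{A}(V) \otimes^{\text{min}} B)$, so that the $C^{*}$-extension theorem applies to the correct dense subspace. The key input making this work is the structural description of prime $\epsilon$-ideals recorded just before the statement: every prime ideal $I$ of $V \otimes^h B$ is of the form $P \otimes^h B + V \otimes^h Q$ with $P = I_V$, $Q = I_B$ prime (by (\cite{AA}, Theorem $4(b)$) and the claim preceding the lemma), which pins down the image of $\Phi$ among primitive ideals and lets the $C^{*}$-argument be imported without modification. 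Once this identification is in place, continuity of the extension is automatic from continuity of $g$ and of the connecting homeomorphisms, and uniqueness is immediate from density.
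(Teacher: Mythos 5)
Your overall strategy (reduce to the $C^{*}$-case for $\mathcal{A}(V)$ and $B$ via the linking algebra) is reasonable in spirit, but the transfer step back to $\operatorname{Prim}(V\otimes^h B)$ contains a genuine gap. To restrict Lazar's extension $g$, which lives on $\operatorname{Prim}(\mathcal{A}(V)\otimes^{\min}B)$, to a function on $\operatorname{Prim}(V\otimes^h B)$, you need a continuous map \emph{from} $\operatorname{Prim}(V\otimes^h B)$ \emph{into} $\operatorname{Prim}(\mathcal{A}(V)\otimes^{\min}B)$. The maps you invoke go the wrong way: $\Phi_2$ is only shown to send $\operatorname{Prim}(V\otimes^{\mathrm{tmin}}B)$ \emph{into} $\operatorname{Prim}(V\otimes^h B)$, and nothing in the paper (or in your argument) establishes that its restriction to primitive ideals is surjective, injective, or has a continuous inverse, so ``pulling back along $\Phi_2$'' is not available. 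Moreover, the identification you want cannot be expected to land in $\operatorname{Prim}(\mathcal{A}(V)\otimes^{\min}B)$ at all: by (\cite{AA}, Theorem 4(b)) a primitive ideal of $V\otimes^h B$ has the form $P\otimes^h B+V\otimes^h Q$ with $P$ and $Q$ merely \emph{prime}, not necessarily primitive, so the natural target is $\operatorname{Prime}(\mathcal{A}(V))\times\operatorname{Prime}(B)$ rather than $\operatorname{Prim}(\mathcal{A}(V))\times\operatorname{Prim}(B)$, and an extension theorem stated only for $\operatorname{Prim}(A\otimes^{\min}B)$ does not directly cover that larger set.

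The paper's proof supplies exactly this missing ingredient. It extends $f$ to the compactification $(\operatorname{Prim}(V)\times\operatorname{Prim}(B))^c$ by (\cite{Lazer}, Lemma 3.1), identifies this compactification with $\operatorname{Prime}(V)\times\operatorname{Prime}(B)$ via (\cite{kirch}, Proposition 7.9), and then --- this is the computational heart that your proposal omits --- proves that the assignment $\Psi(I)=(I_V,I_B)$ is $\tau_w$-continuous on $\operatorname{Id}(V\otimes^h B)$ and carries $\operatorname{Prime}(V\otimes^h B)$ onto $\operatorname{Prime}(V)\times\operatorname{Prime}(B)$; the desired extension is then $\tilde f\circ\nu\circ\Psi$ restricted to $\operatorname{Prim}(V\otimes^h B)$. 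Separately, your density argument for uniqueness is too quick: in the $\tau_w$-topology, knowing that a primitive ideal contains an intersection of members of $\Phi(\operatorname{Prim}(V)\times\operatorname{Prim}(B))$ does not by itself place it in the closure of that set, since a basic neighbourhood is a \emph{finite intersection} of sub-basic sets $U(K_j)$ and one must exhibit a single member of the image lying in all of them simultaneously; this needs an additional (primality-type) argument rather than a direct appeal to Lemma \ref{result4}.
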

\begin{proof}
    By (\cite{Lazer}, Lemma 3.1), there exists a continuous extension  
\[
\tilde{f} : (\operatorname{Prim}(V) \times \operatorname{Prim}(B))^c \to Y
\]  
of \( f \), here $X^c$ is the compactification of  a topological space $X$.  Furthermore, by (\cite{kirch}, Proposition 7.9), there exists a homeomorphism  
\[
\nu: \operatorname{Prim}(V)^c \times \operatorname{Prim}(B)^c = \operatorname{Prime}(V) \times \operatorname{Prime}(B) \to (\operatorname{Prim}(V) \times \operatorname{Prim}(B))^c,
\]  
which acts as the identity on the copies of  $\operatorname{Prim}(V) \times \operatorname{Prim
}(B)$   contained within these spaces.  

Now, let  \(\Psi: \operatorname{Id}(V \otimes^h B) \to \operatorname{Id}(V) \times \operatorname{Id}( B)\) be  the map given by
\[
\Psi(I) := (I_{V}, I_B).
\]
Then,  $\Psi$ is a  continuous map as $\Psi^{-1}(U(I_1) \times U(I_2))=\{I \in \operatorname{Id}(V \otimes^h B) \mid \operatorname{hull} (I_{V}))\cap V(I_1)\neq \emptyset, \operatorname{hull} (I_{B}) \cap V(I_2)\neq \emptyset\}=\{I \in \operatorname{Id}(V \otimes^h B) \mid  I_{V} \not\supseteq I_1,   I_{B} \not\supseteq I_2\}=\{I \in \operatorname{Id}(V \otimes^h B) \mid I \supseteq  V \otimes^h  I_{B}  \not\supseteq  V \otimes^h  I_{2}, I \supseteq  I_V \otimes^h  B  \not\supseteq  I_1 \otimes^h  B\} = \{I \in \operatorname{Id}(V \otimes^h B) \mid I \not\supseteq  V \otimes^h  I_{2}, I   \not\supseteq  I_1 \otimes^h  B\}= U(V \otimes^h  I_{2}) \cap  U(I_1 \otimes^h  B) $, which is open in  $V \otimes^h B$. Furthermore, $\Psi$ maps $\operatorname{Prime}(V \otimes^h B)$ onto $\operatorname{Prime}(V) \times \operatorname{Prime}( B) $  by (\cite{AA}, Theorem $3(a)$).

The composition  
\[
\tilde{f} \circ \nu \circ \Psi : \operatorname{Prime}(V \otimes^h B) \to Y
\]  
is then continuous. Now, let  
\[
\hat{f} = (\tilde{f} \circ \nu \circ \Psi) |_{\operatorname{Prim}(V \otimes^h B)}.
\]  

Finally, for any \( (P_1, P_2) \in \operatorname{Prim}(V) \times \operatorname{Prim}( B) \), we have  
\[
\hat{f}(\Phi(P_1, P_2)) = f(\nu(P_1, P_2)) = f(P_1, P_2),
\]  
since \( \Psi(\Phi(P_1, P_2)) = (P_1, P_2) \). This confirms that \( \hat{f} \) extends \( f \) as desired.
\end{proof}

To establish the homeomorphism,  we equip \( \operatorname{Glimm}(V) \times \operatorname{Glimm}(B) \) with the topology $\tau_{cr}$, which arises from the complete regularization of \(\operatorname{Prim}(V) \times \operatorname{Prim}(B)\). The theorem below shows that the restriction of the map $\Phi$ to this space is a homeomorphism onto $\operatorname{Glimm}(V \otimes^h B)$.

\begin{thm}\label{sec6main}
    Let $V$ be a TRO and $B$ be a $C^{\ast}$-algebra. Then restriction of the map $\Phi$  $$(\operatorname{Glimm}(V) \times \operatorname{Glimm}(B), \tau_{cr}) \to \operatorname{Glimm}(V \otimes^h B)$$   is a homeomorphism. 
\end{thm}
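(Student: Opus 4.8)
The plan is to assemble the homeomorphism from pieces already in place: the homeomorphism $\Phi\colon\operatorname{Id}'(V)\times\operatorname{Id}'(B)\to\Phi(\operatorname{Id}'(V)\times\operatorname{Id}'(B))$ of Theorem~\ref{mainresult2}, the set-theoretic identifications of Glimm ideals of $V\otimes^h B$ with pairs of Glimm ideals of $V$ and $B$ (Propositions~\ref{glimmmain1} and~\ref{glimmmmain1}), and the universal extension property of Lemma~\ref{glimm2}. First I would observe that, as sets, the map in question is a bijection: Proposition~\ref{glimmmain1} says $\Phi$ carries a pair of Glimm ideals to a Glimm ideal, Proposition~\ref{glimmmmain1} says every Glimm ideal of $V\otimes^h B$ arises this way, and injectivity is inherited from Theorem~\ref{mainresult2} since Glimm ideals are in particular proper ideals. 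So the content is entirely topological: one must show that the $\tau_{cr}$-topology on $\operatorname{Glimm}(V)\times\operatorname{Glimm}(B)$ (the complete regularization topology of $\operatorname{Prim}(V)\times\operatorname{Prim}(B)$) is carried exactly onto the $\tau_{cr}$-topology of $\operatorname{Prim}(V\otimes^h B)$, i.e.\ onto $\operatorname{Glimm}(V\otimes^h B)$.

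\textbf{Key steps.} The natural route is through the complete regularization maps. Let $\rho_V\times\rho_B\colon\operatorname{Prim}(V)\times\operatorname{Prim}(B)\to\operatorname{Glimm}(V)\times\operatorname{Glimm}(B)$ denote the product of the regularization maps, and $\rho_{V\otimes^h B}$ the regularization map for $\operatorname{Prim}(V\otimes^h B)$. By Proposition~\ref{fourmain}, $\Phi$ restricts to a continuous map $\operatorname{Prim}(V)\times\operatorname{Prim}(B)\to\operatorname{Prim}(V\otimes^h B)$, and by Theorem~\ref{mainresult2} (together with \cite{AA}, Theorem~4) it is in fact a homeomorphism onto its image $\Phi(\operatorname{Prim}(V)\times\operatorname{Prim}(B))\subseteq\operatorname{Prim}(V\otimes^h B)$. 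Now I would argue in two directions. (1) For continuity of the induced map $\operatorname{Glimm}(V)\times\operatorname{Glimm}(B)\to\operatorname{Glimm}(V\otimes^h B)$: since $\tau_{cr}$ on $\operatorname{Glimm}(V)\times\operatorname{Glimm}(B)$ is initial with respect to $C^b$ of the product, and the composite $\operatorname{Prim}(V)\times\operatorname{Prim}(B)\xrightarrow{\Phi}\operatorname{Prim}(V\otimes^h B)\xrightarrow{\rho_{V\otimes^h B}}\operatorname{Glimm}(V\otimes^h B)$ is continuous, every bounded continuous function on $\operatorname{Glimm}(V\otimes^h B)$ pulls back to one on $\operatorname{Prim}(V)\times\operatorname{Prim}(B)$, which (by definition of $\tau_{cr}$ on the product) factors through $\operatorname{Glimm}(V)\times\operatorname{Glimm}(B)$; this gives the required factorization and continuity. (2) For continuity of the inverse, the crucial input is Lemma~\ref{glimm2}: the continuous map $f=\rho_V\times\rho_B\colon\operatorname{Prim}(V)\times\operatorname{Prim}(B)\to\operatorname{Glimm}(V)\times\operatorname{Glimm}(B)$ (target carrying $\tau_{cr}$, so genuinely continuous into a completely regular space) extends uniquely to a continuous $\hat f\colon\operatorname{Prim}(V\otimes^h B)\to\operatorname{Glimm}(V)\times\operatorname{Glimm}(B)$. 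Since $\operatorname{Glimm}(V)\times\operatorname{Glimm}(B)$ is completely regular, $\hat f$ factors through $\rho_{V\otimes^h B}$ to give a continuous map $\operatorname{Glimm}(V\otimes^h B)\to\operatorname{Glimm}(V)\times\operatorname{Glimm}(B)$, and one checks on the dense subset $\Phi(\operatorname{Prim}(V)\times\operatorname{Prim}(B))$ that this is the inverse of the map from step (1). Finally I would record that both maps are the set-theoretic bijection identified at the outset (using that the Glimm ideal $k([P])\otimes^h B+V\otimes^h k([Q])$ corresponds to $\Phi$ of the pair, as computed in the proofs of Propositions~\ref{glimmmain1}--\ref{glimmmmain1}), so the two continuous maps are mutually inverse bijections, hence a homeomorphism.

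\textbf{Main obstacle.} The delicate point is the inverse-continuity argument, specifically making sure the unique continuous extension $\hat f$ supplied by Lemma~\ref{glimm2} actually \emph{equals} the map induced by $\rho_V\times\rho_B$ on all of $\operatorname{Prim}(V\otimes^h B)$ and descends correctly to $\operatorname{Glimm}(V\otimes^h B)=\rho\operatorname{Prim}(V\otimes^h B)$ — i.e.\ that it is constant on $\approx$-equivalence classes. This follows because $\hat f$ maps into a completely regular space and every bounded continuous function on that space, composed with $\hat f$, is a bounded continuous function on $\operatorname{Prim}(V\otimes^h B)$, hence constant on $\approx$-classes; but spelling this out requires care about which topology ($\tau_{cr}$ versus $\tau_q$) is in play on each Glimm space and invoking that $\rho\operatorname{Prim}(\cdot)$ is by construction the universal completely regular quotient. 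A secondary technical wrinkle is the identification $\rho(\operatorname{Prim}(V)\times\operatorname{Prim}(B))\cong\operatorname{Glimm}(V)\times\operatorname{Glimm}(B)$ as sets (the open bijection of the Lemma~1.1 of \cite{Glimm} quoted in the preliminaries), which is needed to know that equipping the product with $\tau_{cr}$ really does produce the space named in the statement; once that is invoked the rest is bookkeeping with the commuting squares already assembled in Section~4.
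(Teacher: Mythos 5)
Your proposal is correct and follows essentially the same route as the paper: both rest on McConnell's identification of $\rho(\operatorname{Prim}(V)\times\operatorname{Prim}(B))$ with $(\operatorname{Glimm}(V)\times\operatorname{Glimm}(B),\tau_{cr})$, on Lemma~\ref{glimm2} applied to $\rho_V\times\rho_B$ to produce the continuous map $\psi\colon\operatorname{Glimm}(V\otimes^h B)\to\operatorname{Glimm}(V)\times\operatorname{Glimm}(B)$, and on pulling back bounded continuous functions through $\Phi$. The only difference is cosmetic: the paper closes by showing $\psi^*$ is surjective on $C^b$ and invoking Gillman--Jerison, whereas you argue two-sided continuity directly, and the function $g$ constructed in the paper's surjectivity step is exactly the witness for your forward-continuity claim.
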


\begin{proof}
The map  $$\rho_V \times \rho_B : \operatorname{Prim}(V) \times \operatorname{Prim}(B) \to (\operatorname{Glimm}(V) \times \operatorname{Glimm}(B), \tau_{cr})$$ is the complete regularization of \(\operatorname{Prim}(V) \times \operatorname{Prim}(B)\) by (\cite{Glimm}, Lemma $1.1$). By Lemma \ref{glimm2}, the composition  
\[
(\rho_V \times \rho_B) \circ \Phi^{-1} : \Phi(\operatorname{Prim}(V) \times \operatorname{Prim}(B)) \to \operatorname{Glimm}(V) \times \operatorname{Glimm}(B)
\]  
extends uniquely to a continuous map  
\[
\overline{(\rho_V \times \rho_B)} : \operatorname{Prim}(V\otimes^h B) \to \operatorname{Glimm}(V) \times \operatorname{Glimm}(B).
\]
Since \[\operatorname{Glimm}(V) \times \operatorname{Glimm}(B)\] is completely regular, this induces a continuous and surjective map \[ \psi : \operatorname{Glimm}(V\otimes^h B) \to \operatorname{Glimm}(V) \times \operatorname{Glimm}(B), \] that satisfies \[ \psi \circ \rho_{(V \otimes^h B)} = \overline{\rho_V \times \rho_B}, \]
by (\cite{walk}, Corollary 1.8).

To show that \(\psi\) is a homeomorphism, it suffices to prove that the induced \(*\)-homomorphism \[ \psi *: C^b(\operatorname{Glimm}(V) \times \operatorname{Glimm}(B)) \to C^b(\operatorname{Glimm}(V \otimes^h B)), \quad \psi^*(f) = f \circ \psi
\]  
is surjective (\cite{rings}, Theorem 10.3(b)). For this, let \(f \in C^b(\operatorname{Glimm}(V\otimes^h B))\). Then $f \circ \rho_{(V \otimes^h B)} \in C^b(\operatorname{Prim}(V\otimes^h B))$, which implies  $ f \circ \rho_{(V \otimes^h B)} \circ \Phi \in C^b(\operatorname{Prim}(V) \times \operatorname{Prim}(B))$. 
Let \(g \in C^b(\operatorname{Glimm}(V) \times \operatorname{Glimm}(B))\) be the unique function satisfying 
\[g \circ (\rho_V \times \rho_B) = f \circ \rho _{V \otimes h B} \circ \Phi.\] 
both \(f \circ \rho_{(V \otimes^h B)}\) and \(g \circ (\rho_V \times \rho_B)\) are continuous extensions of \(g \circ (\rho_V \times \rho_B) \circ \Phi^{-1}\) to \(\operatorname{Prim}(V\otimes^h B)\), they must be equal by the lemma \ref{glimm2}.  

Now, for any \(m \in \operatorname{Glimm}(V\otimes^h B)\), choose \(M \in \operatorname{Prim}(V\otimes^h B)\) such that \(\rho_{(V \otimes^h B)}(M) = m\). Then,  
\[
\begin{aligned}
\psi^*(g)(m) &= (g \circ \psi)(m) \\
&= (g \circ \psi \circ \rho_{(V \otimes^h B)})(M) \\
&= (g \circ (\rho_V \times \rho_B))(M) \\
&= (f \circ \rho_{(V \otimes^h B)})(M) \\
&= f(m).
\end{aligned}
\]
Thus, \(\psi^*(g) = f\), proving that \(\psi^*\) is surjective. This concludes the proof that \(\psi\) is a homeomorphism.

\end{proof}

The following corollary follows immediately from Corollary \ref{sector3main} and Theorem \ref{sec6main}.

\begin{cor}
    Suppose $V$ or $B$ satisfies any of the conditions $(i)-(iii)$ in Corollary \ref{sector3main}. Then $\tau_{cr}= \tau_p$ on $\operatorname{Glimm}(V) \times \operatorname{Glimm}(B)$,  $\operatorname{Glimm}(V \otimes^h B)$ is homeomorphic to $(\operatorname{Glimm}(V) \times \operatorname{Glimm}(B), \tau_p)$. In particular, if $\rho_V$ or $\rho_B$ is open, this homeomorphism holds. 

\end{cor}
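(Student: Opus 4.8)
The plan is to deduce the statement directly from Corollary~\ref{sector3main} and Theorem~\ref{sec6main}, with no new construction required. First I would pin down the underlying-set identification: by (\cite{Glimm}, Lemma~$1.1$) the map $(\rho_V(P),\rho_B(Q))\mapsto\rho_{\operatorname{Prim}(V)\times\operatorname{Prim}(B)}(P,Q)$ is a bijection, so $\operatorname{Glimm}(V)\times\operatorname{Glimm}(B)$ and $\rho(\operatorname{Prim}(V)\times\operatorname{Prim}(B))$ are the same set carrying, respectively, the topologies $\tau_p$ and $\tau_{cr}$. It is this bijection along which both Corollary~\ref{sector3main} and Theorem~\ref{sec6main} are homeomorphisms, which is what will let the two conclusions be composed.

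The first step is to show $\tau_{cr}=\tau_p$ on $\operatorname{Glimm}(V)\times\operatorname{Glimm}(B)$. If $V$ satisfies one of (i)--(iii), this is precisely Corollary~\ref{sector3main}, since the homeomorphism asserted there is implemented by the bijection just described. If instead $B$ satisfies one of (i)--(iii), I would pass to the linking algebra: using $\operatorname{Prim}(V)\cong\operatorname{Prim}(\mathcal A(V))$, $\operatorname{Glimm}(V)\cong\operatorname{Glimm}(\mathcal A(V))$, and the equivalence ``$V$ is $\sigma$-unital $\iff$ $\mathcal A(V)$ is $\sigma$-unital'' (all three already invoked in the proof of Corollary~\ref{sector3main}), the claim reduces to the corresponding statement for the $C^*$-algebras $\mathcal A(V)$ and $B$; and since the $C^*$-algebra result (\cite{Glimm}, Proposition~$1.9$) is symmetric in its two factors, a hypothesis on $B$ serves just as well as one on $\mathcal A(V)$.

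The second step is to combine this with Theorem~\ref{sec6main}, which gives that the restriction of $\Phi$ is a homeomorphism $(\operatorname{Glimm}(V)\times\operatorname{Glimm}(B),\tau_{cr})\to\operatorname{Glimm}(V\otimes^h B)$. Replacing $\tau_{cr}$ by the now-equal $\tau_p$ yields $\operatorname{Glimm}(V\otimes^h B)\cong(\operatorname{Glimm}(V)\times\operatorname{Glimm}(B),\tau_p)$. For the final clause, I would simply observe that openness of $\rho_V$ is exactly condition~(ii) for $V$, and openness of $\rho_B$ is the analogue of condition~(ii) for $B$, so either hypothesis places us in the situation already handled.

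The main obstacle here is essentially cosmetic: there is no hard analysis, only the need to honour the symmetric ``$V$ or $B$'' hypothesis, which forces the linking-algebra detour and the appeal to the symmetry of (\cite{Glimm}, Proposition~$1.9$). The one point deserving a line of care is verifying that the bijection of (\cite{Glimm}, Lemma~$1.1$) is literally the map along which both cited homeomorphisms are taken, so that the composition is legitimate; this is immediate from the way $\Phi$ acts on $\operatorname{Prim}(V)\times\operatorname{Prim}(B)$.
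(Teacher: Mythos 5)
Your proposal is correct and follows the same route as the paper, which derives this corollary directly from Corollary \ref{sector3main} and Theorem \ref{sec6main} without further argument. Your additional care in checking that the set-level bijection of (\cite{Glimm}, Lemma $1.1$) underlies both cited homeomorphisms, and in using the symmetry of (\cite{Glimm}, Proposition $1.9$) to cover the case where the hypothesis is placed on $B$ rather than $V$, only makes explicit what the paper leaves implicit.
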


Similarly, the next corollary follows immediately from Proposition \ref{mainglimm1} and Theorem \ref{sec6main}.
\begin{cor}
    Suppose that both $\rho_V$ and $\rho_B$ are open. Then $\tau_p= \tau_q$ on \\ $\operatorname{Glimm}(V) \times \operatorname{Glimm}(B)$, and hence $\operatorname{Glimm}(V \otimes^h B)$ is homeomorphic to \\ $(\operatorname{Glimm}(V) \times \operatorname{Glimm}(B), \tau_q)$. 
\end{cor}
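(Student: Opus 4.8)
The plan is to obtain the corollary from Theorem~\ref{sec6main} and the preceding corollary, the only genuinely new ingredient being the equality $\tau_p = \tau_q$ on $\operatorname{Glimm}(V) \times \operatorname{Glimm}(B)$ under the hypothesis that $\rho_V$ and $\rho_B$ are open. First I would record the inclusions of the three relevant topologies on $\operatorname{Glimm}(V) \times \operatorname{Glimm}(B)$ that hold without any hypothesis, namely $\tau_p \subseteq \tau_{cr} \subseteq \tau_q$. The first holds because the bijection $\operatorname{Glimm}(V) \times \operatorname{Glimm}(B) \to \rho(\operatorname{Prim}(V) \times \operatorname{Prim}(B))$ of (\cite{Glimm}, Lemma~1.1) is open and $\tau_{cr}$ is, by definition, the topology transported along it; the second because composing $\rho_V \times \rho_B$ with that bijection gives the (continuous) complete regularization map of $\operatorname{Prim}(V) \times \operatorname{Prim}(B)$, so $\rho_V \times \rho_B$ is $\tau_{cr}$-continuous, while $\tau_q$ is by definition the finest topology making $\rho_V \times \rho_B$ continuous.

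Next, since $\rho_V$ is open, condition~(ii) of Corollary~\ref{sector3main} is satisfied, so the preceding corollary already gives $\tau_{cr} = \tau_p$ together with the homeomorphism $\operatorname{Glimm}(V \otimes^h B) \cong (\operatorname{Glimm}(V) \times \operatorname{Glimm}(B), \tau_p)$. It therefore remains only to prove $\tau_q \subseteq \tau_p$. For this I would use that a product of open maps is open: since $\rho_V$ and $\rho_B$ are open continuous surjections onto $(\operatorname{Glimm}(V), \tau_{cr})$ and $(\operatorname{Glimm}(B), \tau_{cr})$, and sets of the form $U_1 \times U_2$ form a base for $\tau_p$, the map $\rho_V \times \rho_B$ is an open continuous surjection onto $(\operatorname{Glimm}(V) \times \operatorname{Glimm}(B), \tau_p)$. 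Hence for every $W \in \tau_q$ the preimage $(\rho_V \times \rho_B)^{-1}(W)$ is open and, by surjectivity, $W = (\rho_V \times \rho_B)\bigl((\rho_V \times \rho_B)^{-1}(W)\bigr)$ is $\tau_p$-open; thus $\tau_q \subseteq \tau_p$. Combining, $\tau_p = \tau_{cr} = \tau_q$, and substituting $\tau_{cr} = \tau_q$ into Theorem~\ref{sec6main} yields the asserted homeomorphism $\operatorname{Glimm}(V \otimes^h B) \cong (\operatorname{Glimm}(V) \times \operatorname{Glimm}(B), \tau_q)$.

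I do not anticipate a real obstacle: granted Theorem~\ref{sec6main} and the preceding corollary, the argument is elementary point-set topology. The one point needing care is to fix $\tau_q$ on the product as the quotient topology induced by the single map $\rho_V \times \rho_B$ issuing from $\operatorname{Prim}(V) \times \operatorname{Prim}(B)$ (equivalently, via (\cite{Glimm}, Lemma~1.1), the quotient topology of the complete regularization map of $\operatorname{Prim}(V) \times \operatorname{Prim}(B)$), rather than the product of the quotient topologies of the two factors; with this convention the continuous-open-surjection argument applies verbatim. An essentially equivalent alternative would route the argument through the linking algebra, using $\operatorname{Glimm}(V) \cong \operatorname{Glimm}(\mathcal{A}(V))$, the fact that $\rho_V$ is open if and only if $\rho_{\mathcal{A}(V)}$ is, and the corresponding statement for $\mathcal{A}(V)$ and $B$.
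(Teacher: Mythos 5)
Your proof is correct. The paper itself offers no argument beyond the one-line assertion that the corollary ``follows immediately from Proposition~3.10 and Theorem~6.4'', i.e.\ its intended route is the one you relegate to your final sentence: transfer openness of $\rho_V$ to $\rho_{\mathcal{A}(V)}$ via the Glimm-ideal correspondence $\operatorname{Glimm}(V)\cong\operatorname{Glimm}(\mathcal{A}(V))$ and invoke the known $C^{*}$-algebra fact that $\tau_p=\tau_q$ when both regularization maps are open. Your primary argument is instead a self-contained point-set proof: $\tau_p\subseteq\tau_q$ because $\rho_V\times\rho_B$ is continuous into $\tau_p$ and $\tau_q$ is the finest topology with that property, while $\tau_q\subseteq\tau_p$ because a product of open continuous surjections is an open continuous surjection onto the product topology, hence a quotient map, so that $W=(\rho_V\times\rho_B)\bigl((\rho_V\times\rho_B)^{-1}(W)\bigr)$ is $\tau_p$-open for every $\tau_q$-open $W$. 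This buys independence from the external $C^{*}$-algebra reference and makes explicit the one point the paper glosses over, namely that $\tau_q$ on the product must be read as the quotient topology of the single map $\rho_V\times\rho_B$ (equivalently, of $\rho_{\operatorname{Prim}(V)\times\operatorname{Prim}(B)}$ under the identification of (\cite{Glimm}, Lemma~1.1)) and not as the product of the two quotient topologies; the linking-algebra route, by contrast, stays closer to the paper's overall strategy of reducing every statement about $V$ to one about $\mathcal{A}(V)$. Your appeal to the preceding corollary for $\tau_{cr}=\tau_p$ (via condition (ii) of Corollary~3.15) and the substitution into Theorem~6.4 are both legitimate, so the chain $\tau_p=\tau_{cr}=\tau_q$ closes and the asserted homeomorphism follows.
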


We now turn to certain natural consequences that follow from the theory developed so far concerning the structure of primal and Glimm ideals in \( V \otimes^h B \). In particular, we observe a structural relationship between these two classes of ideals. It is a well-established fact in the theory of \( C^* \)-algebras that every primal ideal contains a Glimm ideal. Within the framework established above, this result continues to hold in our setting. Furthermore, we show that if every Glimm ideal is primal, then each Glimm ideal is necessarily minimal among the primal ideals. 

\begin{cor}
Every proper primal ideal $J$ of \( V \otimes^h B \) contains a Glimm ideal of $V \otimes^h B.$
\end{cor}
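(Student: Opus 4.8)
The plan is to reduce the statement to the known structure theorems for primal and Glimm ideals established earlier in the paper, namely Corollary~\ref{sec5cor} and Propositions~\ref{glimmmain1}--\ref{glimmmmain1}, together with the corresponding fact for TROs (and ultimately $C^*$-algebras) that every proper primal ideal contains a Glimm ideal. First I would invoke Corollary~\ref{sec5cor}: since $J$ is a proper primal ideal of $V \otimes^h B$, there exist primal ideals $I$ of $V$ and $J_B$ of $B$ with $I \otimes^h B + V \otimes^h J_B \subseteq J$; moreover, since $J$ is proper, one checks that $I$ and $J_B$ may be taken proper (otherwise $J$ would contain $V \otimes^h B$).

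Next I would pass to the linking algebra. By Proposition~\ref{primalmain}, $\mathcal{A}(I)$ is a proper primal ideal of the $C^*$-algebra $\mathcal{A}(V)$, and $J_B$ is a proper primal ideal of $B$. The known $C^*$-theory (the fact quoted in the paragraph preceding this corollary, that every primal ideal of a $C^*$-algebra contains a Glimm ideal) then gives Glimm ideals $G_1$ of $\mathcal{A}(V)$ and $G_2$ of $B$ with $G_1 \subseteq \mathcal{A}(I)$ and $G_2 \subseteq J_B$. By Proposition~\ref{mainglimm1}, $G_1 = \mathcal{A}(G_V)$ for a (unique) Glimm ideal $G_V$ of $V$, and by injectivity of $\mathcal{A}$ on ideals we get $G_V \subseteq I$. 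Thus $G_V$ is a Glimm ideal of $V$ contained in $I$ and $G_2$ is a Glimm ideal of $B$ contained in $J_B$.

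Finally I would assemble the answer using Proposition~\ref{glimmmain1}: the ideal $G := G_V \otimes^h B + V \otimes^h G_2$ is a Glimm ideal of $V \otimes^h B$. Monotonicity of the construction $\Phi$ (together with Proposition~\ref{result6} to identify $G_V \otimes^h B + V \otimes^h G_2$ as an honest subspace behaving well under inclusions) yields
\[
G = G_V \otimes^h B + V \otimes^h G_2 \subseteq I \otimes^h B + V \otimes^h J_B \subseteq J,
\]
which is exactly the assertion.

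The main obstacle I anticipate is the bookkeeping at the very first step: ensuring the primal ideals $I$ and $J_B$ produced by Corollary~\ref{sec5cor} can be chosen \emph{proper}. If $J$ is proper but $I$ were forced to be $V$, then $J \supseteq V \otimes^h B$, a contradiction; so properness is automatic, but this needs to be spelled out so that the $C^*$-algebra input ``proper primal ideals contain Glimm ideals'' actually applies (a Glimm ideal is by definition proper). Everything else is a routine transfer through the linking-algebra homeomorphisms $\theta$ and $\mathcal{A}$, whose compatibility with primal and Glimm ideals has already been recorded.
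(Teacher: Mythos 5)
Your proposal is correct and follows essentially the same route as the paper: decompose $J$ via Corollary~\ref{sec5cor}, find Glimm ideals of $V$ and $B$ beneath the resulting primal ideals by transferring the $C^*$-algebra fact (\cite{qs1991}, Lemma 2.2) through the linking algebra, and conclude with Proposition~\ref{glimmmain1}. The only addition is your explicit check that the primal ideals $I$ and $J_B$ can be taken proper, a point the paper's proof leaves implicit.
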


\begin{proof}
    By Corollary \ref{sec5cor}, there exist primal ideals \( P \subseteq V \) and \( Q \subseteq B \) such that $ J \supseteq P \otimes^h B + V \otimes^h Q$. By (\cite{qs1991}, Lemma $2.2$) and Proposition \ref{glimmmain1},   there exist Glimm ideals \( I_\alpha \subseteq V \) and \( J_\alpha \subseteq B \) with \( P \supseteq I_\alpha \) and \( Q \supseteq J_\alpha \). It follows that $J \supseteq I_\alpha \otimes^h B + V \otimes^h J_\alpha,$ where the ideal \( I_\alpha \otimes^h B + V \otimes^h J_\alpha \) is a Glimm ideal of \( V \otimes^h B \) by Proposition $6.4$.

\end{proof}

\begin{cor}
 If every Glimm ideal of $V \otimes^h B$ is primal, then $$\mathrm{Glimm}(V \otimes^h B) = \mathrm{Min}\text{-}\mathrm{Primal}(V \otimes^h B)$$
as sets.
\end{cor}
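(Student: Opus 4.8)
The plan is to combine the structural description of Glimm ideals of $V \otimes^h B$ (Proposition~\ref{glimmmmain1}) with the characterization of minimal primal ideals (Theorem~\ref{miniprimal}), and then invoke the corresponding statement for $C^*$-algebras applied to the linking algebra $\mathcal{A}(V)$ together with the isomorphisms $\operatorname{Glimm}(V) \cong \operatorname{Glimm}(\mathcal{A}(V))$ and $\operatorname{Primal}(V) \cong \operatorname{Primal}(\mathcal{A}(V))$. The hypothesis that every Glimm ideal of $V \otimes^h B$ is primal, transported via $\mathcal{A}$, says exactly that every Glimm ideal of $\mathcal{A}(V) \otimes^h B$ is primal, so the known $C^*$-result (\cite{qs1991}, and its Haagerup-tensor analogue used already in Section~6) gives $\operatorname{Glimm}(\mathcal{A}(V) \otimes^h B) = \operatorname{Min}\text{-}\operatorname{Primal}(\mathcal{A}(V) \otimes^h B)$ as sets; pulling this back along $i^{-1}$ would give the claim. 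Alternatively, and perhaps more transparently, one can argue directly at the level of $V \otimes^h B$ as follows.

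First I would show that under the hypothesis, every Glimm ideal $G$ of $V \otimes^h B$ is minimal primal. By Proposition~\ref{glimmmmain1}, write $G = I_G \otimes^h B + V \otimes^h J_G$ with $I_G, J_G$ Glimm ideals of $V$ and $B$. Since $G$ is assumed primal, I would like to conclude that $I_G$ and $J_G$ are primal in $V$ and $B$ respectively: this follows from the converse direction of Theorem~\ref{mainthmsec5} (if $K = V \otimes^h J + I \otimes^h B$ is primal then $I$ is primal in $V$ and $J$ is primal in $B$). Now $I_G$ is a Glimm ideal of $V$ which is primal; by the $C^*$-theory applied to $\mathcal{A}(V)$ (via Propositions~\ref{mainglimm1} and the fact that Glimm-and-primal Glimm ideals are minimal primal for $C^*$-algebras, \cite{qs1991}), $I_G$ is minimal primal in $V$, and likewise $J_G$ is minimal primal in $B$. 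Then Theorem~\ref{miniprimal} identifies $G = I_G \otimes^h B + V \otimes^h J_G$ as an element of $\operatorname{Min}\text{-}\operatorname{Primal}(V \otimes^h B)$. This gives the inclusion $\operatorname{Glimm}(V \otimes^h B) \subseteq \operatorname{Min}\text{-}\operatorname{Primal}(V \otimes^h B)$.

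For the reverse inclusion, let $P \in \operatorname{Min}\text{-}\operatorname{Primal}(V \otimes^h B)$. By Corollary~6.11 (every proper primal ideal of $V \otimes^h B$ contains a Glimm ideal), $P$ contains some Glimm ideal $G$ of $V \otimes^h B$. By the hypothesis $G$ is primal, and by the inclusion just established $G$ is in fact minimal primal. Since $G \subseteq P$ with both minimal primal, minimality of $P$ forces $P = G$, so $P$ is a Glimm ideal. Combining the two inclusions yields the stated set equality.

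The main obstacle I anticipate is the step asserting that a Glimm ideal of $V$ which happens to be primal is automatically \emph{minimal} primal — this is the TRO analogue of the $C^*$-fact that, when $\rho$ is not assumed open, a primal Glimm ideal still sits minimally among primal ideals (essentially \cite{qs1991}, Lemma~2.2 and surrounding material). One must be careful that this is genuinely available: it should transfer cleanly through $\mathcal{A}(V)$ using $\operatorname{Glimm}(V) \cong \operatorname{Glimm}(\mathcal{A}(V))$ and $\operatorname{Primal}(V) \cong \operatorname{Primal}(\mathcal{A}(V))$ (Propositions~\ref{primalmain} and~\ref{mainglimm1}), since both the Glimm-ideal poset and the primal-ideal poset are preserved by $\theta$. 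A secondary point of care is that Corollary~6.11 is stated for $V \otimes^h B$ directly, so no further lifting is needed there; the only real work is assembling Theorems~\ref{mainthmsec5} and~\ref{miniprimal} with the $C^*$-input in the correct order.
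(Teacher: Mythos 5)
Your proposal is correct and takes essentially the same route as the paper: the paper reduces the claim, via Theorems~\ref{miniprimal} and~\ref{sec6main}, to the componentwise equalities $\mathrm{Glimm}(V)=\mathrm{Min}\text{-}\mathrm{Primal}(V)$ and $\mathrm{Glimm}(B)=\mathrm{Min}\text{-}\mathrm{Primal}(B)$, which it deduces from (\cite{qs1991}, Lemma 3.1) together with Propositions~\ref{primalmain} and~\ref{glimmmain1} --- exactly the ingredients you assemble. Your treatment of the reverse inclusion through the corollary that every proper primal ideal of $V\otimes^h B$ contains a Glimm ideal is a harmless reorganization, and the step you flag as the main obstacle (that a primal Glimm ideal is automatically minimal primal) is precisely what the cited Lemma 3.1 of \cite{qs1991} supplies once transferred through $\mathcal{A}(V)$.
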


\begin{proof}
    In view of Theorem \ref{miniprimal} and Theorem \ref{sec6main}, it is enough to prove that $\mathrm{Glimm}(V) = \mathrm{Min}\text{-}\mathrm{Primal}(V)$ and $\mathrm{Glimm}(B) = \mathrm{Min}\text{-}\mathrm{Primal}(B)$. This follows immediately from  (\cite{qs1991}, Lemma $3.1$), along with Proposition \ref{primalmain} and Proposition \ref{glimmmain1}. 
\end{proof}

As one more application of the theory developed above concerning the structure of primal and Glimm ideals in $V \otimes^h B$, we now turn to the study of quasi-standardness of $V \otimes^h B$. Following the approach used for TROs, we define an equivalence relation \(\sim\) on $\operatorname{Prim}(V \otimes^h B)$ based on topological separation, exactly in the same manner as done earlier. We then say that $V \otimes^h B$ is quasi-standard if the equivalence relation \(\sim\) is open.

\begin{cor}
   The Haagerup tensor product $V \otimes^h B $ is quasi-standard if and only if $V$ and $B$ are quasi-standard.
\end{cor}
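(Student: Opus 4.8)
The plan is to reduce the claim to the corresponding result for $C^{\ast}$-algebras via the linking algebra. The key observation is that, just as in Section~3, the Haagerup tensor product $V \otimes^h B$ and the $C^{\ast}$-algebra $\mathcal{A}(V) \otimes^{\min} B$ (or equivalently $\mathcal{A}(V) \otimes^h B$ via the $i$-ideal/$\epsilon$-ideal identification) have homeomorphic primitive ideal spaces, and this homeomorphism respects the separation relation $\sim$. So first I would record that $\operatorname{Prim}(V \otimes^h B) \cong \operatorname{Prim}(\mathcal{A}(V) \otimes^{\min} B)$: indeed, $\operatorname{Prim}(V \otimes^h B) \cong \operatorname{Prim}(V \otimes^{\mathrm{tmin}} B)$ through $\Phi_2$ on primitive ideals (as noted in Section~4), and $\operatorname{Prim}(V \otimes^{\mathrm{tmin}} B) \cong \operatorname{Prim}(\mathcal{A}(V) \otimes^{\min} B)$ via the homeomorphism $\Psi$ of (\cite{MK_JR}, Proposition~3.1). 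Since these are homeomorphisms of topological spaces, $P \sim Q$ in $\operatorname{Prim}(V \otimes^h B)$ if and only if the corresponding primitive ideals are $\sim$-related in $\operatorname{Prim}(\mathcal{A}(V) \otimes^{\min} B)$, and the relation $\sim$ is open on one space precisely when it is open on the other. Hence $V \otimes^h B$ is quasi-standard if and only if $\mathcal{A}(V) \otimes^{\min} B$ is quasi-standard.

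Next I would invoke the known $C^{\ast}$-algebra result: by (\cite{qs1991}) (the quasi-standardness of a minimal $C^{\ast}$-tensor product), $\mathcal{A}(V) \otimes^{\min} B$ is quasi-standard if and only if both $\mathcal{A}(V)$ and $B$ are quasi-standard. Finally, by the proposition proved earlier in Section~3, $\mathcal{A}(V)$ is quasi-standard if and only if $V$ is quasi-standard. Chaining these equivalences together gives: $V \otimes^h B$ is quasi-standard $\iff$ $\mathcal{A}(V) \otimes^{\min} B$ is quasi-standard $\iff$ $\mathcal{A}(V)$ and $B$ are both quasi-standard $\iff$ $V$ and $B$ are both quasi-standard, which is exactly the assertion.

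An alternative, more self-contained route would bypass the linking algebra and argue directly using the structural results of Sections~5--6: the homeomorphisms $\Phi|_{\operatorname{Min\text{-}Primal}}$ (Theorem~\ref{miniprimal}) and $\Phi|_{\operatorname{Glimm}}$ (Theorem~\ref{sec6main}), combined with the characterization of quasi-standardness in terms of every Glimm ideal being primal together with the coincidence of the $\tau_q$-topology on $\operatorname{Glimm}$ with the topology on $\operatorname{Min\text{-}Primal}$ (the TRO analogue, proved in Section~3). One would show that "every Glimm ideal of $V \otimes^h B$ is primal" holds iff the same holds for both factors (using that a Glimm ideal $I_G \otimes^h B + V \otimes^h J_G$ is primal iff $I_G$ and $J_G$ are primal, by Theorem~\ref{mainthmsec5}), and that the topological identification $\tau_q = \tau$ transfers through $\Phi$ since $\Phi$ intertwines the complete regularization maps. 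However, this direct approach requires knowing that $\tau_q$ (not just $\tau_{cr}$) is preserved by $\Phi$, which is not immediate from Theorem~\ref{sec6main} alone and would need the openness hypotheses to be tracked carefully.

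The main obstacle I anticipate is in the alternative approach: controlling the quotient topology $\tau_q$ on $\operatorname{Glimm}(V \otimes^h B)$, since Theorem~\ref{sec6main} only gives the homeomorphism for the $\tau_{cr}$-topology, and quasi-standardness is phrased in terms of $\tau_q$ matching the $\operatorname{Min\text{-}Primal}$ topology. For this reason the linking-algebra route is cleaner — there the only thing to verify carefully is that the chain of homeomorphisms on primitive ideal spaces genuinely carries the relation $\sim$ and its openness across, which is routine once one has the homeomorphisms in hand. I would therefore present the linking-algebra argument as the proof and perhaps remark on the alternative.
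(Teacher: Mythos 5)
The paper does not actually prove this corollary: it merely notes that the statement is known for $C^{\ast}$-algebras and asserts that an analogous proof applies, so your argument has to stand on its own. The difficulty is that your primary (linking-algebra) route rests on a homeomorphism that the paper never establishes and that is false in general, namely $\operatorname{Prim}(V\otimes^h B)\cong\operatorname{Prim}(V\otimes^{\mathrm{tmin}}B)$. Section~4 only shows that $\Phi_2$ carries $\operatorname{Prim}(V\otimes^{\mathrm{tmin}}B)$ continuously \emph{into} $\operatorname{Prim}(V\otimes^h B)$; nothing is said about surjectivity, injectivity on primitive ideals, or openness, and these genuinely fail: already for $C^{\ast}$-algebras, $\operatorname{Prim}(A\otimes_{\min}B)$ may contain primitive ideals that are not of the form $\ker(q_P\otimes q_Q)$, whereas every primitive ideal of $V\otimes^h B$ is of the form $P\otimes^h B+V\otimes^h Q$ (\cite{AA}, Theorem~4). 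So quasi-standardness of $\mathcal{A}(V)\otimes^{\min}B$ and of $V\otimes^h B$ concern different primitive ideal spaces, and the equivalence between them is precisely what would need proof. Relatedly, the $C^{\ast}$-algebra input you invoke should be the Haagerup-tensor-product version (which is what the paper cites), not a statement about $\otimes^{\min}$.

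A repair along your first route would be to replace $\otimes^{\min}$ by $\otimes^{h}$ throughout and actually prove that the $i$-ideal identification induces a homeomorphism $\operatorname{Prim}(V\otimes^h B)\cong\operatorname{Prim}(\mathcal{A}(V)\otimes^{h}B)$ respecting $\sim$; this is plausible given $\operatorname{Prim}(V)\cong\operatorname{Prim}(\mathcal{A}(V))$ and the description of primitive ideals on both sides, but it is a step you must carry out, not cite. Alternatively, your second route through Theorems~\ref{miniprimal} and~\ref{sec6main} and the Section~3 criteria is evidently what the authors intend, and you correctly flag its obstruction ($\tau_q$ versus $\tau_{cr}$ on the Glimm space) without resolving it; the standard resolution is to observe that quasi-standardness of either side forces the relevant complete regularization maps to be open (condition (ii) of the Section~3 characterization), after which $\tau_q=\tau_{cr}=\tau_p$ by the corollaries following Theorem~\ref{sec6main}, and only then to compare $\operatorname{Glimm}$ with $\operatorname{Min\text{-}Primal}$. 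As written, neither of your routes is complete.
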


This result is known for $C^{\ast}$-algebras (\cite{qs1991}, Theorem $5.4$). In view of the theory developed so far and the openness of the equivalence relation \(\sim\)—an analogous proof applies to the TRO setting as well. We omit the details.

We conclude this section by summarizing our results in the following example, which illustrates the characterization of ideals in $B(\mathcal{H}, \mathcal{K}) \otimes^h B(\mathcal{L})$.

\begin{ex}
Let $\mathcal{H}$, $\mathcal{K}$, and $\mathcal{L}$ be infinite-dimensional separable Hilbert spaces. It is known (\cite{AKKKK}, Example 4.23) that any nonzero ideal of $B(\mathcal{H}, \mathcal{K}) \otimes^h B(\mathcal{L})$ is one of the following:
$B(\mathcal{H}, \mathcal{K}) \otimes^h B(\mathcal{L}), B(\mathcal{H}, \mathcal{K}) \otimes^h K(\mathcal{L}), K(\mathcal{H}, \mathcal{K}) \otimes^h B(\mathcal{L}), K(\mathcal{H}, \mathcal{K}) \otimes^h K(\mathcal{L}),B(\mathcal{H}, \mathcal{K}) \otimes^h K(\mathcal{L}) + K(\mathcal{H}, \mathcal{K}) \otimes^h B(\mathcal{L})$. From Corollary \ref{sec5cor}, we deduce that every ideal of $B(\mathcal{H}, \mathcal{K}) \otimes^h B(\mathcal{L})$ is primal. It is straightforward to verify that $K(\mathcal{H}, \mathcal{K}) \otimes^h K(\mathcal{L})$ is not a prime ideal of $B(\mathcal{H}, \mathcal{K}) \otimes^h B(\mathcal{L})$, and hence, it is not factorial (Remark 5.12). Furthermore, by Proposition \ref{facmain1}, all other nontrivial ideals of $B(\mathcal{H}, \mathcal{K}) \otimes^h B(\mathcal{L})$ are factorial ideals. It is known (\cite{AA}, Example 4) that the only primitive ideal of $B(\mathcal{H}, \mathcal{K}) \otimes^h B(\mathcal{L})$ is $B(\mathcal{H}, \mathcal{K}) \otimes^h K(\mathcal{L}) + K(\mathcal{H}, \mathcal{K}) \otimes^h B(\mathcal{L})$. Thus, the only Glimm ideal of $B(\mathcal{H}, \mathcal{K}) \otimes^h B(\mathcal{L})$ is also $B(\mathcal{H}, \mathcal{K}) \otimes^h K(\mathcal{L}) + K(\mathcal{H}, \mathcal{K}) \otimes^h B(\mathcal{L})$. Finally, since $B(\mathcal{H}, \mathcal{K})$ and $B(\mathcal{L})$ are quasi-standard, it follows that $B(\mathcal{H}, \mathcal{K}) \otimes^h B(\mathcal{L})$ is also quasi-standard.

\end{ex}

\end{document}